\documentclass[12pt]{amsart}
\usepackage[foot]{amsaddr}

\usepackage{amssymb,amscd,amsthm,verbatim,amsmath,color,fancyhdr,mathrsfs}
\usepackage{graphicx}
\usepackage{turnstile}

\usepackage[letterpaper, left=2.5cm, right=2.5cm, top=2.5cm,bottom=2.5cm,dvips]{geometry}

\usepackage{styfile} 

\setcounter{section}{0}

\newtheorem{theorem}{Theorem}[section]
\newtheorem{proposition}[theorem]{Proposition}
\newtheorem{lemma}[theorem]{Lemma}
\newtheorem{remark}[theorem]{Remark}
\newtheorem{corollary}[theorem]{Corollary}

\newtheorem{definition}[theorem]{Definition}


\newtheorem{assumption}{Assumption}
\newtheorem*{takeaway}{Takeaway}

\DeclarePairedDelimiterX{\bracket}[3]{#1}{#2}{#3}
\newcommand{\round}[1]{\bracket*{(}{)}{#1}}

\newcommand{\squarebrack}[1]{\bracket*{\lbrack}{\rbrack}{#1}}


\providecommand*{\eu}{\ensuremath{\mathrm{e}}} 

\providecommand{\newoperator}[3]{\newcommand*{#1}{\mathop{#2}#3}}
\providecommand{\renewoperator}[3]{\renewcommand*{#1}{\mathop{#2}#3}}

\renewoperator{\Re}{\mathrm{Re}}{\nolimits}
\renewoperator{\Im}{\mathrm{Im}}{\nolimits}



\providecommand*{\slot}[1]{\ifblank{#1}{\,\cdot\,}{#1}}

\DeclarePairedDelimiterXPP{\nrm}[2]{}{\lVert}{\rVert}{\ensuremath{_{#1}}}{\ifblank{#2}{\:\cdot\:}{#2}}
\newcommand{\norm}[2]{\nrm*{#1}{#2}}

\newcommand{\enorm}[1]{\norm{2}{#1}} 

\newcommand{\opnorm}[1]{\norm{\mathrm{op}}{#1}}
\newcommand{\normF}[1]{\norm{\mathrm{F}}{#1}}

\newcommand{\abs}[1]{\bracket*{\lvert}{\rvert}{#1}}

\newcommand{\inner}[1]{\bracket*{\langle}{\rangle}{#1}}

\DeclarePairedDelimiterXPP\prob[1]{\mathbb{P}}{\lbrace}{\rbrace}{}{\renewcommand\given{\nonscript\:\delimsize\vert\nonscript\:\mathopen{}}#1} 
\newcommand{\Prob}[1]{\prob*{#1}} 

\DeclarePairedDelimiterXPP\probability[2]{\mathbb{P}_{#1}}{\lbrace}{\rbrace}{}{\renewcommand\given{\nonscript\:\delimsize\vert\nonscript\:\mathopen{}}#2} 

\DeclarePairedDelimiterXPP\expectation[1]{\mathbb{E}}{\lbrack}{\rbrack}{}{\renewcommand\given{\nonscript\:\delimsize\vert\nonscript\:\mathopen{}}#1} 
\newcommand{\E}[1]{\expectation*{#1}} 

\DeclarePairedDelimiterXPP\expectationdist[2]{\mathbb{E}_{#1}}{\lbrack}{\rbrack}{}{\renewcommand\given{\nonscript\:\delimsize\vert\nonscript\:\mathopen{}}#2} 
\newcommand{\Exp}[2]{\expectationdist*{#1}{#2}} 

\DeclarePairedDelimiterXPP\variance[1]{\mathrm{Var}}{\lbrack}{\rbrack}{}{\renewcommand\given{\nonscript\:\delimsize\vert\nonscript\:\mathopen{}}#1} 
\newcommand{\Var}[1]{\variance*{#1}} 

\DeclarePairedDelimiterXPP\variancedist[2]{\mathrm{Var}_{#1}}{\lbrack}{\rbrack}{}{\renewcommand\given{\nonscript\:\delimsize\vert\nonscript\:\mathopen{}}#2} 

\DeclarePairedDelimiterXPP\covariance[2]{\mathrm{Cov}}{(}{)}{}{#1,\mathopen{}#2} 

\newcommand{\inv}[1]{\frac{1}{#1}}


\newcommand{\divergence}{\nabla \cdot}

\newoperator{\supp}{\mathrm{supp}}{\nolimits}

\makeatletter
\providecommand*{\diff}
{\@ifnextchar^{\DIfF}{\DIfF^{}}}
\def\DIfF^#1{
	\mathop{\mathrm{\mathstrut d}}
	\nolimits^{#1}\gobblespace}
\def\gobblespace{
	\futurelet\diffarg\opspace}
\def\opspace{
	\let\DiffSpace\!
	\ifx\diffarg(
	\let\DiffSpace\relax
	\else
	\ifx\diffarg[
	\let\DiffSpace\relax
	\else
	\ifx\diffarg\{
	\let\DiffSpace\relax
	\fi\fi\fi\DiffSpace}

\providecommand*{\pdiff}
{\@ifnextchar^{\pDIfF}{\pDIfF^{}}}
\def\pDIfF^#1{
	\mathop{\mathrm{\mathstrut \partial}}
	\nolimits^{#1}\gobblespace}
\def\gobblespace{
	\futurelet\diffarg\opspace}
\def\opspace{
	\let\DiffSpace\!
	\ifx\diffarg(
	\let\DiffSpace\relax
	\else
	\ifx\diffarg[
	\let\DiffSpace\relax
	\else
	\ifx\diffarg\{
	\let\DiffSpace\relax
	\fi\fi\fi\DiffSpace}

\providecommand*{\deriv}[3][]{\frac{\diff^{#1}{#2}}{\diff {#3}^{#1}}}
\providecommand*{\pderiv}[3][]{\frac{\pdiff^{#1}{#2}}{\pdiff {#3}^{#1}}}

\makeatother

\providecommand\given{}
\newcommand\SetSymbol[1][]{
	\nonscript\:#1\vert
	\allowbreak
	\nonscript\:
	\mathopen{}}
\DeclarePairedDelimiterX\Set[1]\{\}{
	\renewcommand\given{\SetSymbol[\delimsize]}
	#1
}

\newcommand{\interval}[1]{\squarebrack{#1}}

\newcommand{\setinline}[1]{\Set{#1}}

\newcommand{\indicator}[2]{\mathbbm{1}\ensuremath{_{#1}}\ifblank{#2}{}{\Set*{#2}}} %


\newcommand{\Rd}[1]{\mathbb{R}^{#1}}
\newcommand{\Natural}{\mathbb{N}}

\newcommand{\Rational}{\mathbb{Q}}



\newcommand{\id}{\mathrm{id}}











\newcommand{\Bcal}{\mathcal{B}}

\newcommand{\Ecal}{\mathcal{E}}
\newcommand{\Fcal}{\mathcal{F}}

\newcommand{\Ical}{\mathcal{I}}

\newcommand{\Mcal}{\mathcal{M}}

\newcommand{\Pcal}{\mathcal{P}}

\newcommand{\Tcal}{\mathcal{T}}

\newcommand{\Wcal}{\mathcal{W}}



\newcommand{\tauvec}{{\boldsymbol{\tau}}}


\newcommand{\rr}{\mathbb{R}}
\newcommand{\eq}{\begin{equation}}
\newcommand{\en}{\end{equation}}


\providecommand*{\Graphons}{{\widehat\Wcal}}
\providecommand*{\cut}{\square}
\newcommand{\cutnorm}[1]{\norm{\cut}{#1}}

\newcommand{\cutmetric}[2]{\delta_\cut\round{\slot{#1},\slot{#2}}}
\newcommand{\metric}[3]{{#1}\round{\slot{#2},\slot{#3}}}
\newcommand{\toplim}[1]{#1\text{-}\lim} 
\newcommand{\effdom}{\mathop{\mathrm{eff}\text{-}\mathrm{Dom}}\nolimits}
\newcommand{\Ent}{\mathcal{E}}
\newcommand{\RaghavS}[1]{{\color{orange}[Somani: #1]}}


\title{Gradient flows on graphons: existence, convergence, continuity equations}
\author{Sewoong Oh}
\address{Sewoong Oh\\ Paul G. Allen School of Computer Science \& Engineering \\ University of Washington\\ Seattle WA 98195, USA\\ {Email: sewoong@cs.washington.edu}}
\author{Soumik Pal}
\address{Soumik Pal\\ Department of Mathematics \\ University of Washington\\ Seattle WA 98195, USA\\ {Email: soumikpal@gmail.com}}
\author{Raghav Somani}
\address{Raghav Somani\\ Paul G. Allen School of Computer Science \& Engineering \\ University of Washington\\ Seattle WA 98195, USA\\ {Email: raghavs@cs.washington.edu}}
\author{Raghavendra Tripathi}
\address{Raghavendra Tripathi\\ Department of Mathematics \\ University of Washington\\ Seattle WA 98195, USA\\ {Email: raghavt@uw.edu}}

\keywords{gradient flows, graphons, optimal transport, exchangeability}
	
\subjclass[2000]{05C60,05C80,68R10,60K35}

\thanks{This research is partially supported by the following grants. Pal is supported by NSF grant DMS-2052239 and a PIMS CRG (PIHOT). Pal and Oh are supported by NSF grant DMS-2134012. Oh is supported by NSF grant CCF-2019844. Many thanks to Persi Diaconis, Apoorva Khare and Stefan Steinerberger for helpful conversations and references and the PIMS Kantorovich Initiative for facilitating this collaboration. The authors are listed in alphabetical order.}

\date{\today}

\begin{document}

\begin{abstract}
Wasserstein gradient flows on probability measures have found a host of applications in various  optimization problems. They typically arise as the continuum limit of exchangeable particle systems evolving by some mean-field interaction involving a gradient-type potential. However, in many problems, such as in multi-layer neural networks, the so-called particles are edge weights on large graphs whose nodes are exchangeable. Such large graphs are known to converge to continuum limits called graphons as their size grow to infinity. We show that the Euclidean gradient flow of a suitable function of the edge-weights converges to a novel continuum limit given by a curve on the space of graphons that can be appropriately described as a gradient flow or, more technically, a curve of maximal slope. Several natural functions on graphons, such as homomorphism functions and the scalar entropy, are covered by our set-up, and the examples have been worked out in detail.
\end{abstract}

\maketitle



\section{Introduction}\label{sec:intro}

Let $x_1, x_2, \ldots, x_n$ be $n$ vectors in $\rr^d$. Let $f_n\colon \left( \rr^d\right)^n \rightarrow \rr$ be a permutation invariant function of those $n$ variables. Here \textit{permutation invariant} means $f_n(x_1, \ldots, x_n)= f_n(x_{\pi_1}, \ldots, x_{\pi_n})$ where $\pi$ is any permutation of the set $\squarebrack{n}\coloneqq \{1,2,\ldots, n\}$. Throughout this text, we will denote the symmetric group on the finite set $\squarebrack{n}$ as $S_n$. Consider the Cauchy problem 
\begin{equation}\label{eq:finitecauchy}
    \dot{x}_i(t) = - \nabla_{i} f_n(x_1(t), \ldots, x_n(t)), \quad i\in [n], \qquad t\in\rr_+,
\end{equation}
with given initial conditions $\round{x_i(0)}_{i\in [n]}$. Here $\nabla_i$ refers to the partial derivative with respect to the $i$-th variable and $\rr_+$ denotes the set of all non-negative real numbers. The solution to this problem (which exists and is unique when, say, $\nabla f_n$ is Lipschitz~\cite{lindelof1894application}) is often called the gradient flow of $f_n$. A natural question that appears in several applications is whether the solution to the above Cauchy problem has a \textit{scaling limit} as $n$ goes to infinity.  

In order for such a limit to exist, it is imperative that there is some consistency over the dimension parameter $n$. The permutation invariance of $f_n$ offers a resolution. In fact, if $\round{x_i}_{i\in [n]}$ is thought of as positions of particles in space, $f_n$ can be thought of as a function acting on the empirical distribution $\mu_n\coloneqq \frac{1}{n} \sum_{i=1}^n \delta_{x_i}$. Here $\mu_n$ is a discrete probability measure that puts mass $1/n$ on the positions of each of the $n$ particles, represented by the delta mass $\delta_{\cdot}$.

For any metric space $(\Omega,d)$, denote its Borel sigma algebra as $\Bcal\round{\Omega}$ and the set of all Borel probability measures as $\Pcal(\Omega)$. Consider $\rr^d$ with the usual Euclidean metric and let $F\colon \Pcal(\rr^d) \rightarrow \rr$ be a suitable function. The function $F$ induces a sequence of permutation invariant functions $\round{f_n}_{n\in\Natural}$ as above by the definition 
\[
    f_n\left( x_1, \ldots, x_n \right) \coloneqq F\left( \mu_n \right), \quad n \in \Natural.
\]
For such an $f_n$ for any $n\in\Natural$, the evolution~\eqref{eq:finitecauchy} can be thought of as an evolution on the space of probability measures by defining
\[
    \mu_n(t) \coloneqq \frac{1}{n} \sum_{i=1}^n \delta_{x_i(t)}, \quad t\in\rr_+.
\]
Now the following question makes sense. Suppose that the sequence of initial measures $\round{\mu_n(0)}_{n\in\Natural}$ \textit{converges} to a limiting probability measure $\mu(0)$ where the convergence is typically in the sense of weak convergence of probability measures. Does the sequence of curves $\big(\left( \mu_n(t) \right)_{t\in\rr_+}\big)_{n\in\Natural}$ converge to some limiting curve on $\Pcal(\rr^d)$ possibly after rescaling time? 

The answer to the above, under suitable assumptions on $F$, is the so-called Wasserstein gradient flow~\cite{V03, santambrogio2015optimal} of $F$ on the metric space $\Pcal\left(\rr^d \right)$ equipped with the Wasserstein-$2$ metric, $\mathbb{W}_2$. There is now a general theory of curves of maximal slopes (AKA gradient flows) developed for functions on metric spaces which may lack a differentiable structure~\cite{ambrosio2005gradient}. The Wasserstein space is a prominent example that has been thoroughly studied~\cite{ambrosio2005gradient,santambrogio2017euclidean}. Recently there has been a surge in interest in the application of the above convergence of gradient flows in the context of single hidden layer neural networks, see~\cite{song2018mean,chizat2018global,rotskoff2018parameters,mei2019mean,carrillo2019blob,araujo2019mean,nguyen2020rigorous,sirignano2020clt,sirignano2020lln,tzen2020mean,bach2021gradient}.

However, we are interested in optimization problems where the arguments can be thought of as weights attached to the edges of a large dense graph. Let $G=([n], E)$ be a graph. For $\{i,j\} \in E$ one has an associated variable $W_{i,j}=W_{j,i}$ that we take it to be real-valued in this article. For all the applications we consider, we can take $W_{i,j}=0$ if $\{i,j\}\not\in E$. Thus our variables can be arranged in an $n \times n$ symmetric matrix $\left( W_{i,j} \right)_{i,j\in [n]}$. Let the set of $n\times n$ real valued symmetric matrices be denoted by $\Mcal_n(\rr)$. Let $f_n\colon \Mcal_n(\rr) \rightarrow \rr \cup \{\infty\}$ be a function of such matrices. The crucial difference from the previous set-up is that we want $f_n$ to satisfy a permutation invariance property with respect to relabeling the vertices of $G$: for every $\pi \in S_n$, 
\[
    f_n\left( \round{W_{\pi_i, \pi_j}}_{i,j \in [n]}\right) = f_n\left( \round{W_{i,j}}_{i,j \in [n]}\right).
\]
That is, the function value does not change if we permute the rows and the columns of the symmetric matrix $\round{W_{i,j}}_{i,j \in [n]}$ by the same permutation. In other words, such functions are invariant under graph isomorphisms of $G$. We call such functions {\em invariant}. One can now ask the same question as before. Consider the gradient flow Cauchy problem
\begin{equation}\label{eq:finitecauchymatrix}
    \dot{W}_{i,j}(t) = - \nabla_{i,j} f_n\left( \round{W_{i,j}(t)}_{i,j \in [n]}\right), \quad i,j\in\squarebrack{n}, 
\end{equation}
with given $\left(W_{i,j}(0)\right)_{i,j \in [n]}$. Is there a suitable scaling limit as $n$ goes to infinity? This paper answers this question in affirmative under reasonable conditions on $f_n$.  

We restrict ourselves to the case where the edge weights $\round{W_{i,j}}_{i,j \in [n]}$ all lie in the bounded interval $[-1,1]$. Without loss of generality, we can take our graph to be the complete graph with its weighted adjacency matrix $\round{W_{i,j}}_{i,j \in [n]}$. Just like empirical distributions of particle systems converge to probability measures, these graph adjacency matrices with bounded edge weights, identified up to graph isomorphisms, converge to a limiting object called graphons~\cite{lovasz2006limits,borgs2008convergent,borgs2012convergent}. This is intimately connected with the theory of exchangeable arrays in probability theory~\cite{aldous1981representations, aldous1982exchangeability, hoover1982row, kallenberg1989representation}. For a definitive modern account of exchangeable arrays and their connections with the limits of large graphs, the reader is referred to~\cite{diaconis2007graph,austin2008exchangeable, austin2012exchangeable, AustinExchMeasure}. The theory of graph limits and graphons has found applications in many fields including extremal graph theory, combinatorics, data analysis, biology. We refer the reader to~\cite{DIAO2015183,DiffCal,bhattacharya2020upper,ben2021ordered} and references therein for further details. 

\subsection{Setup and Main results}\label{subsec:Setup}
Let us define graphons rigorously. A \textit{kernel} is a Borel measurable function $W\colon \interval{0,1}^{(2)} \to \interval{-1,1}$ that is symmetric, i.e., $W(x,y) = W(y,x)$ for Lebesgue a.e. $(x,y)\in[0,1]^{(2)}$. Two kernels are identified if they are equal Lebesgue a.e. Unless specified otherwise, we always consider the domain of a kernel to be the standard Borel probability space on $[0,1]^{(2)}$. We will denote the set of all kernels by $\Wcal$. Let $U, V\in \Wcal$. We say $U\cong V$ if there exists Lebesgue measure preserving transforms $\varphi$ and $\psi$ on $\interval{0,1}$ and a kernel $W$ such that $U(x, y)=W(\varphi(x), \varphi(y))$ and $V(x, y)=W(\psi(x), \psi(y))$ for Lebesgue a.e. $(x,y)\in[0,1]^{(2)}$.
\begin{definition}[Graphons]\label{def:graphons}
    Equivalence class of functions in $\Wcal$ under ($\cong$) are called {\em graphons} and we denote the set of graphons by $\Graphons \coloneqq \Wcal/{\cong}$.
\end{definition}
For a kernel $W\in\Wcal$, we denote its equivalence class as $\squarebrack{W} \coloneqq \Set*{U\in\Wcal \given U\cong W}$.
See Section~\ref{sec:background} for more details. The set of graphons $\Graphons$ will be endowed with two different metrics. One, called the invariant $L^2$ metric~\cite{lovasz2012large, janson2016graphons, borgs2016sparse}, $\delta_2$, plays a similar role as the $\mathbb{W}_2$ metric does on probability measures. The metric space $(\Graphons, \delta_2)$ is a geodesic space (Proposition~\ref{prop:graphon_geodesic_space}). Our gradient flows will be with respect to this metric. The other, called the cut metric, $\delta_\cut$, is more traditional~\cite{borgs2008convergent, lovasz2012large} and is important here for the topology that it generates and will play the role of the metric of weak convergence of probability measures. Thus, although our gradient flows will be defined with respect to the invariant $L^2$ metric $\delta_2$, the various convergence statements will be with respect to the cut metric $\delta_\cut$.

To state our main results we need a set-up that is similar to particle systems and their limiting probability measures. For any $k\in\Natural$, we denote the set of all $k\times k$ symmetric matrices with entries in $[-1,1]$ as $\Mcal_k$. Every symmetric matrix in $\Mcal_k$ identified up to the same permutation on rows and columns can be embedded in the space of block graphons $\Graphons_k\subseteq \Graphons$ (see Section~\ref{sec:background} for details). Thus, any function $F\colon\Graphons\rightarrow \mathbb{R}\cup\Set*\infty$ induces a sequence of functions $\big(F_k\colon\Graphons_k\rightarrow \mathbb{R}\cup\Set*\infty\big)_{k\in\Natural}$, by restriction, and a sequence of invariant functions $\round{f_k}_{k\in\Natural}$ on such $k \times k$ symmetric matrices with entries in $[-1, 1]$. 

The first pertinent question is that given $F\colon\Graphons\to\mathbb{R}\cup\{\infty\}$ and $[U_0]\in\Graphons$ in the proper effective domain $\effdom(F) \coloneqq \setinline{\squarebrack{U}\in\Graphons\given F\round{\squarebrack{U}}<\infty}$ of $F$ (see~\cite[equation 1.2.1]{ambrosio2005gradient}), under what assumptions on $F$ a ``gradient flow'' of $F$ on $(\Graphons, \delta_2)$ exists starting at $[U_0]$. On a general metric space, a gradient flow curve (i.e., a curve of maximal slope~\cite[Definition 1.3.2]{ambrosio2005gradient}) is obtained by showing that the limits of the solutions of implicit Euler iterations (see Section~\ref{sec:grad_flows}) exist and satisfy added assumptions. The existence of the limit of the sequence of such implicit Euler iterations requires the {\em local slope} $\abs{\pdiff F}$ of $F$, defined as
\begin{align}
    \abs{\pdiff F}\round{\squarebrack{V}} &\coloneqq \limsup_{\squarebrack{W}\in\Graphons,\,\delta_2\round{\squarebrack{W},\squarebrack{V}} \to 0} \frac{\round{F\round{\squarebrack{V}} - F\round{\squarebrack{W}}}^+}{\delta_2\round{\squarebrack{W},\squarebrack{V}}}.
\end{align}
In practice, however, evaluating the local slope is not easy. We introduce the concept of \textit{Fr\'echet-like derivative} in Section~\ref{sec:frechet_derivatives} and show that for functions that have Fr\'echet-like derivative, the local slope $\abs{\pdiff F}$ admits a more amenable expression in terms of $L^2$-norm of the Fr\'echet-like derivative. Moreover, under a semiconvexity assumption, just the existence of Fr\'echet-like derivative suffices for the existence of a curve of maximal slope (see Theorem~\ref{thm:Existence_with_FrehetDerivative}).

\sloppy Since $(\Graphons,\delta_2)$ is a geodesic space, one can talk about $\lambda$-semiconvex functions for $\lambda\in\rr$ over geodesics and generalized geodesics (see Section~\ref{sec:background} and Section~\ref{sec:prelims}). Theorem~\ref{thm:GF_convergence} shows that under suitable assumptions on a semiconvex function $F$, the gradient flow of $F$ on $(\Graphons, \delta_2)$ can be obtained as the time-scaled limit of the Euclidean gradient flows of $\round{f_k}_{k\in\Natural}$ on $\round{\Mcal_k}_{k\in\Natural}$ respectively. That is, suppose we have a sequence of block graphons $(\squarebrack{U_{k,0}}\in \Graphons_k)_{k\in\Natural}\xrightarrow{\delta_{\cut}}[U_0]\in\Graphons$ and let $\omega = \round{\omega_t}_{t\in\rr_+}$ be the gradient flow of $F$ on $(\Graphons, \delta_2)$ starting at $\omega_0=[U_0]\in\Graphons$. Then the gradient flow $\omega^{(k)}=\big(\omega^{(k)}_t\big)_{t\in\rr_+}$ of $F_k$ starting at $\omega^{(k)}_0=[U_{k, 0}]\in\Graphons_k$ converges, in $\delta_\cut$ to $\omega$ uniformly over compact time intervals as $k\to\infty$. The next argument shows that for any $k\in\Natural$, $\omega^{(k)}$ is a time-scaling of the Euclidean gradient flow of $f_k$.

For any $k\in\Natural$, the Euclidean gradient flow of the function $f_k$ over $\Mcal_k$ can be approximated via the implicit Euler method. Starting from $X_{k,\tau}\in \Mcal_k$ with a step size of $\tau>0$, the next iterate of the implicit Euler method, say $X_{k,\tau,+}$, is obtained as
\begin{align}
    X_{k,\tau,+} &\in \argmin_{X_k\in \Mcal_k} \left[  f_k\round{X_k} + \inv{2\tau}\enorm{X_k - X_{k,\tau}}^2\right].\label{eq:matrix_implicit_Euler}
\end{align}
Let $K$ be the natural embedding map from $k\times k$ symmetric matrices to the space of block kernels $\Wcal_k$ (Definition~\ref{def:K_Mk}). Since the function $f_k$ is permutation invariant, setting $\squarebrack{U_{k,\tau}} = \squarebrack{K\round{X_{k,\tau}}}$, equation \eqref{eq:matrix_implicit_Euler} is equivalent to obtaining
\begin{align}
    \squarebrack{U_k} &\in \argmin_{\squarebrack{U_k}\in\Graphons_k}\left[ F_k\round{\squarebrack{U_k}} + \frac{k^2}{2\tau}\min_{\substack{X_k\in \Mcal_k,\\\squarebrack{U_k} = \squarebrack{K\round{X_k}}}}\inv{k^2}\sum_{i,j=1}^{k}\abs{\round{X_k}_{i,j} - \round{X_{k,\tau}}_{i,j}}^2\right]\nonumber\\
    &= \argmin_{\squarebrack{U_k}\in\Graphons_k} \left[ F_k\round{\squarebrack{U_k}} + \frac{k^2}{2\tau}\delta^2_2\round{\squarebrack{U_k},\squarebrack{U_{k,\tau}}}\right],\label{eq:graphon_implicit_Euler_scaled}
\end{align}
via the substitution $\squarebrack{U_{k}} = \squarebrack{K\round{X_k}}$. Equation~\eqref{eq:graphon_implicit_Euler_scaled} is precisely the implicit Euler iteration for gradient flow on $(\Graphons_k,\delta_2)$ with a step size of $\tau/k^2$ (see Section~\ref{sec:implicit_euler}). Since iterations of the form in equation~\eqref{eq:matrix_implicit_Euler} converge to the Euclidean gradient flow as $\tau\to 0$, its image via the map $X\mapsto \squarebrack{K\round{X}}$ in equation~\eqref{eq:graphon_implicit_Euler_scaled} converges to the gradient flow on $(\Graphons_k,\delta_2)$ as $\tau\to 0$.

\begin{theorem}[Convergence of Gradient Flows]\label{thm:GF_convergence}
    Suppose $F\colon\Graphons \to \rr\cup\Set*\infty$ satisfies the following conditions:
    \begin{enumerate}
        \item $F$ is continuous in $\delta_\cut$.
        \item\label{item:condition_2_convergence} $F$ is $\lambda$-semiconvex (Definition~\ref{def:lambda_cvx_along_curve}) along generalized geodesics on $(\Graphons,\delta_2)$ (Definition~\ref{def:gen_geodesics}), for some $\lambda\in\rr$.
    \end{enumerate}
    Consider the gradient flow $\omega^{(k)} = \big(\omega^{(k)}_t\big)_{t\in\rr_+}\subset \Graphons_k$ of $F$ on each $\Graphons_k$, starting at some $\omega^{(k)}_0 = \squarebrack{U_{k,0}}$ for $k\in\Natural$. Assume that the sequence $\round{\squarebrack{U_{k,0}}}_{k\in\Natural}\xrightarrow{\delta_\cut}\squarebrack{U_0}\in\Graphons$, and $\abs{\pdiff F}\round{\squarebrack{U_{0}}}<\infty$ and $\limsup_{k\to\infty}\abs{\pdiff F}\round{\squarebrack{U_{k,0}}} \leq G < \infty$, for some $G\geq 0$. Then,
    \begin{align}
        \limsup_{k\to\infty}\sup_{t\in[0,T]}\cutmetric{\omega^{(k)}_t}{\omega_t} &= 0,
    \end{align}
    for any $T\in\rr_+$, where $\omega = \round{\omega_t}_{t\in\rr_+}$ is the unique minimizing movement curve~\cite[Definition 2.0.6, page 42]{ambrosio2005gradient} on $\Graphons$ for the function $F$ starting at $\omega_0 = \squarebrack{U_0}$.
    In addition, if the conditions for the existence of curves of maximal slope (Theorem~\ref{thm:existence} or Theorem~\ref{thm:Existence_with_FrehetDerivative}) hold, then $\omega$ is a curve of maximal slope.
\end{theorem}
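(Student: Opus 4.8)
The plan is to realize each $\omega^{(k)}$ as the minimizing movement --- equivalently, the curve of maximal slope --- of $F$ restricted to the geodesic subspace $(\Graphons_k,\delta_2)$, to extract from $k$-uniform a priori bounds a $\delta_\cut$-subsequential limit of the $\omega^{(k)}$, to identify that limit as the minimizing movement of $F$ on all of $(\Graphons,\delta_2)$ starting at $\squarebrack{U_0}$, and to conclude by uniqueness. Three soft facts are used repeatedly: $(\Graphons,\delta_\cut)$ is compact; $\delta_\cut\le\delta_2$ while $\delta_2$ is lower semicontinuous with respect to $\delta_\cut$; and $\bigcup_k\Graphons_k$ is $\delta_2$-dense in $\Graphons$ (Section~\ref{sec:background}). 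Since $F$ is $\delta_\cut$-continuous on the $\delta_\cut$-compact space $\Graphons$, it is bounded, so $\mathrm{osc}(F):=\sup_\Graphons F-\inf_\Graphons F<\infty$.

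First I would record the standard minimizing-movement estimates \cite{ambrosio2005gradient}, uniform in $k$: the map $t\mapsto F(\omega^{(k)}_t)$ is non-increasing, the energy dissipation equality
\[
    F\bigl(\omega^{(k)}_0\bigr)-F\bigl(\omega^{(k)}_t\bigr)=\tfrac12\int_0^t\abs{\dot\omega^{(k)}_r}^2\diff r+\tfrac12\int_0^t\abs{\partial_{\Graphons_k}F}^2\bigl(\omega^{(k)}_r\bigr)\diff r
\]
holds with $\abs{\partial_{\Graphons_k}F}$ the local slope of $F$ in $(\Graphons_k,\delta_2)$, so that $\int_0^\infty\abs{\dot\omega^{(k)}_r}^2\diff r\le 2\,\mathrm{osc}(F)$ and $\delta_2\bigl(\omega^{(k)}_s,\omega^{(k)}_t\bigr)\le\sqrt{2\,\mathrm{osc}(F)}\sqrt{\abs{t-s}}$; and, because $F|_{\Graphons_k}$ is likewise $\lambda$-semiconvex along generalized geodesics (Section~\ref{sec:background}), the slope propagates, $\abs{\partial_{\Graphons_k}F}\bigl(\omega^{(k)}_t\bigr)\le e^{\abs\lambda t}\abs{\partial_{\Graphons_k}F}\bigl(\omega^{(k)}_0\bigr)\le e^{\abs\lambda t}\abs{\pdiff F}\bigl(\squarebrack{U_{k,0}}\bigr)$, eventually $\le e^{\abs\lambda t}(G+o(1))$ by hypothesis, which upgrades the modulus to a $k$-uniform Lipschitz bound near $t=0$ (so that the dissipation cannot concentrate at $t=0$ in the limit). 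Since $\delta_\cut\le\delta_2$ the curves $\omega^{(k)}$ are $\delta_\cut$-equicontinuous on each $[0,T]$; as $(\Graphons,\delta_\cut)$ is compact, Arzel\`a--Ascoli together with a diagonal argument over $T\in\Natural$ yield, along a subsequence, $\omega^{(k)}\to\tilde\omega$ in $C\bigl(\rr_+,(\Graphons,\delta_\cut)\bigr)$ uniformly on compact time intervals, with $\tilde\omega_0=\squarebrack{U_0}$; lower semicontinuity of $\delta_2$ then makes $\tilde\omega$ $\delta_2$-locally absolutely continuous with the same $\tfrac12$-Hölder constant and $F\circ\tilde\omega$ non-increasing.

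The heart of the matter is to show that any such $\tilde\omega$ is the minimizing movement of $F$ on $(\Graphons,\delta_2)$ from $\squarebrack{U_0}$; uniqueness of that curve under $\lambda$-semiconvexity along generalized geodesics (Theorem~\ref{thm:existence}, cf.~\cite{ambrosio2005gradient}) then forces $\tilde\omega=\omega$, and, every subsequence having a further subsequence with limit $\omega$, the full sequence converges: $\sup_{t\in[0,T]}\cutmetric{\omega^{(k)}_t}{\omega_t}\to0$ for every $T\in\rr_+$. I would pass to the $\liminf$ in the energy dissipation equality: its left side tends to $F(\tilde\omega_0)-F(\tilde\omega_t)$ by $\delta_\cut$-continuity of $F$, and the metric-speed term is lower semicontinuous under uniform $\delta_\cut$-convergence (again from $\delta_\cut$-lower semicontinuity of $\delta_2$), so $\liminf_k\int_0^t\abs{\dot\omega^{(k)}_r}^2\diff r\ge\int_0^t\abs{\dot{\tilde\omega}_r}^2\diff r$. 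The crux is the slope term, for which I would prove the $\Gamma$-liminf inequality
\[
    \liminf_{k\to\infty}\abs{\partial_{\Graphons_k}F}\bigl(\squarebrack{V_k}\bigr)\ \ge\ \abs{\pdiff F}\bigl(\squarebrack{V}\bigr)\qquad\text{whenever }\squarebrack{V_k}\in\Graphons_k,\ \squarebrack{V_k}\xrightarrow{\delta_\cut}\squarebrack{V}.
\]
Granting this, Fatou's lemma applied with $\omega^{(k)}_r\xrightarrow{\delta_\cut}\tilde\omega_r$ gives $\liminf_k\int_0^t\abs{\partial_{\Graphons_k}F}^2(\omega^{(k)}_r)\diff r\ge\int_0^t\abs{\pdiff F}^2(\tilde\omega_r)\diff r$, so $\tilde\omega$ satisfies the energy dissipation inequality for $F$ on $(\Graphons,\delta_2)$; combined with the reverse inequality that always holds ($\abs{\pdiff F}$ being a strong upper gradient by $\lambda$-semiconvexity), this becomes an equality, which identifies $\tilde\omega$ as a curve of maximal slope, hence as the minimizing movement.

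To establish the $\Gamma$-liminf for the slopes I would use the convexity representation $\abs{\pdiff F}(\squarebrack{V})=\sup_{\squarebrack{W}}\bigl(\tfrac{F(\squarebrack{V})-F(\squarebrack{W})}{\delta_2(\squarebrack{W},\squarebrack{V})}+\tfrac\lambda2\delta_2(\squarebrack{W},\squarebrack{V})\bigr)^+$, reduce by $\delta_2$-density of $\bigcup_m\Graphons_m$ (and $\delta_2$-continuity of $F$) to competitors $\squarebrack{W}$ that are block graphons, and then transfer a near-optimal such $\squarebrack{W}$ onto $\squarebrack{V_k}$: carry the displacement of $\squarebrack{W}$ from $\squarebrack{V}$ over to $\squarebrack{V_k}$ along an optimal $\delta_\cut$-relabeling and then project and truncate it back into $\Graphons_k$, obtaining $\squarebrack{W_k}\in\Graphons_k$ with $\squarebrack{W_k}\xrightarrow{\delta_\cut}\squarebrack{W}$, whence $F(\squarebrack{W_k})\to F(\squarebrack{W})$, and $\limsup_k\delta_2(\squarebrack{W_k},\squarebrack{V_k})\le\delta_2(\squarebrack{W},\squarebrack{V})$; together with the matching lower bound from $\delta_\cut$-lower semicontinuity of $\delta_2$ this forces $\delta_2(\squarebrack{W_k},\squarebrack{V_k})\to\delta_2(\squarebrack{W},\squarebrack{V})$, and inserting $\squarebrack{W_k}$ into the convexity formula for $\abs{\partial_{\Graphons_k}F}(\squarebrack{V_k})$ and letting $k\to\infty$ yields the claim. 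I expect this transfer step to be the main obstacle: the descent of $F$ is felt in $\delta_2$ whereas only $\delta_\cut$-convergence of the $\squarebrack{V_k}$ is in hand, so the projection--truncation construction has to be carried out with care, and this is where the graphon regularity machinery enters. Finally, the last assertion of the theorem is then immediate: if the hypotheses of Theorem~\ref{thm:existence} or Theorem~\ref{thm:Existence_with_FrehetDerivative} hold, those results already exhibit $\omega$ as a curve of maximal slope, and the convergence proved above realizes it as the limit of the finite-dimensional Euclidean gradient flows.
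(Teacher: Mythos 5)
Your route is a genuinely different one from the paper's. You argue in the Sandier--Serfaty/De Giorgi style: extract a $\delta_\cut$-compact limit of the curves via the uniform energy-dissipation bound, then identify the limit as the curve of maximal slope by passing the energy dissipation equality to the limit, which requires a $\Gamma$-$\liminf$ inequality for the local slopes $\abs{\partial_{\Graphons_k}F}$. The paper does something else: it proves $\Gamma$-convergence of the Moreau--Yosida functionals $\Phi_F(\tau,\cdot\,;\cdot)\vert_{\Graphons_k}$ (Proposition~\ref{prop:gamma_convergence}), which yields convergence of the discrete implicit-Euler iterates $\overline{[U_{k,\tauvec_m}]}\to\overline{[U_{\tauvec_m}]}$ uniformly in $t$, and then controls the distance from each discrete solution to its own gradient flow by the $k$-uniform a~priori rate $\gamma(\abs{\tauvec_m},\lambda,t;T,G)$ of~\cite[Eq.~4.0.6, Thms.~4.0.9--4.0.10]{ambrosio2005gradient}, finishing by the triangle inequality and a double limit in $k$ then $m$. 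Your approach, if carried out, would in fact directly deliver that $\omega$ is a curve of maximal slope under the theorem's bare hypotheses, slightly more than the statement advertises; the paper's strategy is more conservative but avoids having to control the slopes at all.

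The difficulty is that the $\Gamma$-$\liminf$ of the slopes is precisely the part you leave unproved, and it is where all the graphon-specific work lives. Your plan --- take a near-optimal competitor $\squarebrack{W}$, reduce to block graphons by density, transport it onto $\squarebrack{V_k}$ ``along an optimal $\delta_\cut$-relabeling and then project and truncate'' --- is the analogue of the paper's recovery-sequence construction $W_k^*=\E{W^\psi\mid\Fcal_k}+Z_k$ in Proposition~\ref{prop:gamma_convergence}, but you do not carry it out, and it is not routine. Two things need to be checked and are not. First, the displacement you transport onto $U_k^{\varphi_k}$ can push the block kernel outside $[-1,1]$; truncating back into $\Wcal_k$ is $1$-Lipschitz in $L^2$ and so preserves the $\limsup$ bound $\limsup_k\delta_2(\squarebrack{W_k},\squarebrack{V_k})\le\delta_2(\squarebrack{W},\squarebrack{V})$, but truncation is \emph{not} a $\cutnorm{}$-continuous operation, so the assertion $\squarebrack{W_k}\xrightarrow{\delta_\cut}\squarebrack{W}$ (needed to conclude $F(\squarebrack{W_k})\to F(\squarebrack{W})$) requires a separate argument, essentially a cut-norm estimate on $(\abs{W_k^*}-1)^+$. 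Second, the convexity representation $\abs{\pdiff F}(\cdot)=\sup_{\cdot}(\tfrac{F(\cdot)-F(\cdot)}{\delta_2}+\tfrac\lambda2\delta_2)^+$ has to be applied with competitors restricted to $\Graphons_k$ on the left and unrestricted on the right, so you also need $F\vert_{\Graphons_k}$ to be $\lambda$-geodesically convex on $(\Graphons_k,\delta_2)$ --- which requires checking that $(\Graphons_k,\delta_2)$ is geodesic and that its geodesics are covered by the generalized-geodesic semiconvexity hypothesis. Neither of these gaps is fatal, and the paper's Proposition~\ref{prop:gamma_convergence} shows essentially what is needed, but as written your ``transfer step'' is a plan rather than a proof, and you have correctly identified it as the crux.
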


\begin{remark}\label{rem:gen_geo_cvx_from_W}
    \sloppy Condition~\ref{item:condition_2_convergence} in Theorem~\ref{thm:GF_convergence} is satisfied if the invariant extension $f\colon\Wcal\to\rr\cup\Set*\infty$ of $F$ is $\lambda$-semiconvex on $(\Wcal,d_2)$ (see Section~\ref{sec:topologies_on_graphons}, and Definition~\ref{def:semiconvexity}).
\end{remark}

An important theme in this work is that many important curves in $\Graphons$ are actually obtained as the projection of some curve defined in $\Wcal$. In particular, the gradient flow of $F$ is the natural image of an absolutely continuous curve in $(\Wcal, d_2)$. More precisely, if $f$ has a Fr\'echet-like derivative $D_{\Wcal}f(W)$ for all $W\in\Wcal$, then there gradient flow $\omega$ of $F$ can be written as the image of the curve $\round{[W_t]}_{t\in\rr_+}$ defined as
\[
    W_{t}(x, y)=W_0(x, y)-\int_{0}^{t}D_{\Wcal}f(W_s)(x,y)\indicator{G_{W_s}}{(x, y)}\diff s,
\] 
for a.e. $(x,y)\in[0,1]^{(2)}$, where $\indicator{G_{W_s}}{}$ is a `boundary correction term' that makes sure that the curve remains inside $[-1, 1]$. The reader is referred to Section~\ref{sec:frechet_derivatives} for details. In this sense our work is in the vein of \cite{Gangbo19} where the authors view the Wasserstein geometry as a projection of an $L^2$ geometry.  

\subsubsection{An example and discussion}\label{sec:intro_example}
To elucidate our results, consider the scalar entropy function $\Ent\colon\Graphons\to\rr\cup\Set*\infty$ (see~\cite{chatterjee2011large} for applications to large deviations of Erd\H{o}s-R\'enyi random graphs):
\begin{equation}\label{eq:scalarentropy}
    \Ent([W]) \coloneqq \int_0^1 \int_0^1 h(W(x,y))\diff x\diff y,\qquad [W]\in\Graphons,
\end{equation}
where $h\colon\rr \rightarrow \rr\cup\Set*\infty$ is the convex entropy function $h(p)\coloneqq p\log p + (1-p) \log (1-p)$, if $p\in(0,1)$, $h(0)=h(1)=0$, and $h(p)=\infty$, otherwise.

The function $\Ent$ is lower semicontinuous in the cut metric~\cite[Lemma 2.1]{chatterjee2011large}. However, for a given $\epsilon\in (0,1/2)$, if we restrict the domain of $\Ent$ to be all $[W]\in\Graphons$ such that $\epsilon \leq W \leq 1- \epsilon$, a.e., then $\Ent$ is $\delta_\cut$-continuous on this restricted domain. The function $\Ent$ can be shown to be convex along generalized geodesics and its local slope can be computed easily (see Section~\ref{sec:examples_GF} for all the details). The Fr\'echet-like derivative $D_{\Wcal}\Ent$ of $\Ent$ at any such graphon $[W]$ is given by another graphon that is ``coupled'' with $[W]$ (see Definition~\ref{def:coupled_graphons}) 
\begin{equation}\label{eq:velocityentropy}
    D_{\Wcal}\Ent(W)(x,y)= \log\left( \frac{W(x,y)}{1- W(x,y)} \right)\, , \qquad (x,y) \in [0,1]^{(2)}.
\end{equation}
Thus, starting from a graphon $[U_0]$ such that $\epsilon \leq U_0 \leq 1 - \epsilon$, a.e., the gradient flow $\omega$ evolves every coordinate of the graphon $\omega_t$ at time $t\in\rr_+$ by the velocity $-D_{\Graphons}\Ent(\omega_t)$ (see Section~\ref{sec:frechet_derivatives}). But the expression in equation~\eqref{eq:velocityentropy} is positive for any $W(x,y) > 1/2$ and negative for any $W(x,y) < 1/2$. Thus, the gradient flow converges, as $t \rightarrow \infty$, to the constant graphon $\omega_\infty \equiv 1/2$, a.e., which is the unique minimizer of the function $\Ent$. Restrict the domain of the scalar entropy function on $k\times k$ symmetric matrices $A$ with entries in $\interval{\epsilon, 1-\epsilon}$. Define $\Ent_k(A)\coloneqq k^{-2}\sum_{i=1}^k \sum_{j=1}^k h(A_{i,j})$. Then Theorem~\ref{thm:GF_convergence} further says the following. If $\omega^{(k)}$ is an Euclidean gradient flow of $\Ent_k$, and if $\toplim{\delta_\cut}_{k\rightarrow \infty}\omega^{(k)}(0)=[U_0]\in \Graphons$, then $(\omega^{(k)})_{k\in\Natural}$ converges, uniformly in the cut metric on compact sets $[0, T]$ for $T>0$, as $k\rightarrow \infty$, to the curve $\omega$ described.

It is important to note that the curve $\omega$ is actually obtained as the natural image of the curve $t\mapsto W_t$ obtained by solving
\begin{align}
    W_t(x, y)=W_0(x, y)-\int_{0}^{t}\log\round{\frac{W_s(x,y)}{1-W_s(x,y)}}\diff s,\quad \text{a.e. 
 } (x,y)\in\interval{0,1}^{(2)},
\end{align}
for $t\in\rr_+$, where the above integral is defined pointwise (See Section~\ref{sec:examples_GF} for details). It is a recurring theme in this paper that many important curves in $\Graphons$ can be seen as the natural image of a curve in $\Wcal$.

\sloppy More examples have been worked out in Section~\ref{sec:examples_GF}. This includes the case when $F$ is any (simple) graph homomorphism function that features prominently in the theory of graph convergence to graphons. Our examples also cover the gradient flow of any linear combination of the scalar entropy function and homomorphism functions that are of particular interest in the study of exponential random graph models (see~\cite{chatterjee2017large, chatterjee2013estimating, kenyon_yin_2017,eldan2018exponential,ghafouri2020survey}) and in the large deviation principle of dense random graphs~\cite{chatterjee2011large, chatterjee2017large, lubetzky2015replica, cook2020large} where one is interested in optimizing the so-called rate function. For example, see~\cite[Section 5.4.1]{ChernThesis} where the author is interested in computing the maximum likelihood estimate for a model of exponential random graphs analyzed in~\cite{chatterjee2013estimating}. Since graphs are discrete, the optimization is more amenable to analytical tools on the limiting graphon space. But the space of graphons is infinite dimensional and the author uses gradient descent on discretized block graphons to perform the gradient descent. Our results show that the method is consistent as the discretization gets finer and provides a mathematical justification to the algorithm in~\cite{ChernThesis}. 

It is well-known in optimal transport that an absolutely continuous curve in $\mathbb{W}_2$ has an associated continuity equation~\cite[Theorem 5.14]{santambrogio2015optimal}. Proposition~\ref{prop:AC_to_CE} gives a partial analogue of this result in current setting. That is, a curve of gradient flow (which is absolutely continuous) has an associated family of equations, not just one continuity equation. This connection becomes is more definitively explored in~\cite{HOPST22} where authors show that the gradient flows (and more general class of curves) can be described by a McKean-Vlasov type equation.

\subsection{A Computational example from extremal graph theory}\label{sec:mantel}
We give an interesting example in this section that suggests how the tools developed in this paper can be used to provide heuristics or search for counterexamples in many problems that are of interest in extremal graph theory.

Mantel's theorem~\cite{mantel1907problem} (a special case of Tur\'{a}n's theorem) states that the maximum number of edges in an $n$-vertex triangle-free graph is $n^2/4$. Further, any Hamiltonian graph with at least $n^2/4$ edges must either be the complete bipartite graph $K_{n/2,n/2}$ or it must be pancyclic~\cite{bondy1971pancyclic}. 

This suggests that if one maximizes the edge density subject to the condition that triangle density is $0$, then the maximizer should correspond to a complete bipartite graph. Our current theory does not allow for such constrained optimization. However, one can attempt to computationally ``verify'' this result by simulating gradient flows of linear combinations of homomorphism functions, $T_\triangle - \alpha T_{\mathrel{-}}$, over $(\Graphons,\delta_2)$ for sufficiently small weight $\alpha>0$. We make an arbitrary choice of weight, say $\alpha=1/10$ for numerical simulation and consider minimizing $T_\triangle - T_{\mathrel{-}}/10$ over $[W]\in\Graphons$ such that $0\leq W\leq 1$ a.e., where $\triangle$ and $\mathrel{-}$ are the triangle and the edge graphs respectively. This is akin to minimizing triangle density while also maximizing the edge density as much as possible. We see that graphons with small function values have small triangle density and large edge density. We set $n = 128$, step size $\tau = 10^{-3}$ and use the forward Euler method starting from an initial graphon $\squarebrack{W^{(n)}_0}\in\Graphons_{n}$ as shown in Figure~\ref{fig:mantel_init}. Figure~\ref{fig:Mantel} shows instances of the iterative process after $10^3$, $1.5\times 10^3$, $2.5\times 10^3$, $5\times 10^3$ and $10^4$ many steps. We see in Figure~\ref{fig:mantel_final} that after $10^4$ iteration, the kernel $W^{(n)}_{10^4}$ is close to the one corresponding to a complete bipartite graph as one would expect from Mantel's theorem. Our Theorem~\ref{thm:GF_convergence} implies that one should expect a similar evolution for all large values of $n$.

\begin{figure}
    \centering
    \subcaptionbox{$W^{(128)}_0$\label{fig:mantel_init}}{\includegraphics[width=0.3\linewidth]{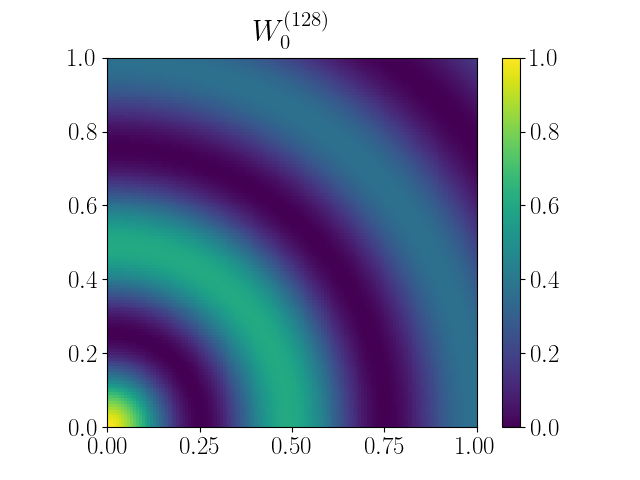}}
    \subcaptionbox{$W^{(128)}_{10^3}$}{\includegraphics[width=0.3\linewidth]{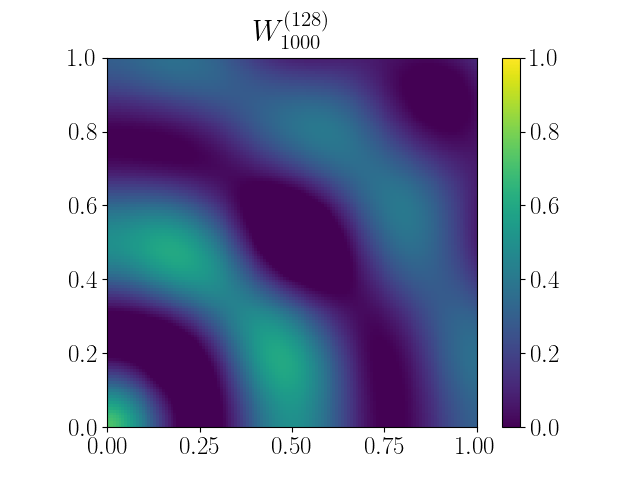}}
    \subcaptionbox{$W^{(128)}_{1.5\times 10^3}$}{\includegraphics[width=0.3\linewidth]{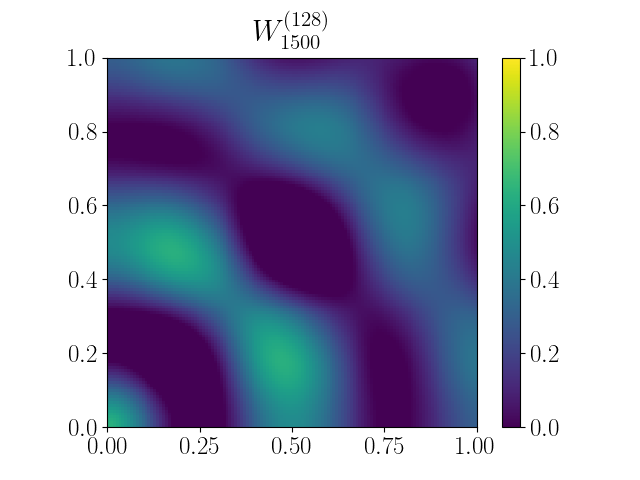}}
    \subcaptionbox{$W^{(128)}_{2.5\times 10^3}$}{\includegraphics[width=0.3\linewidth]{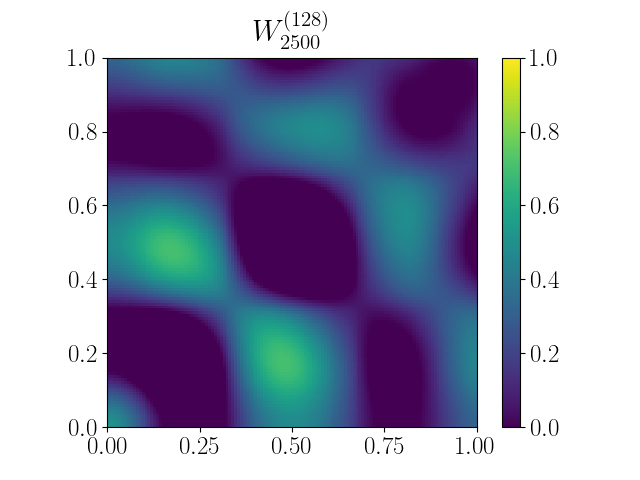}}
    \subcaptionbox{$W^{(128)}_{5\times 10^3}$}{\includegraphics[width=0.3\linewidth]{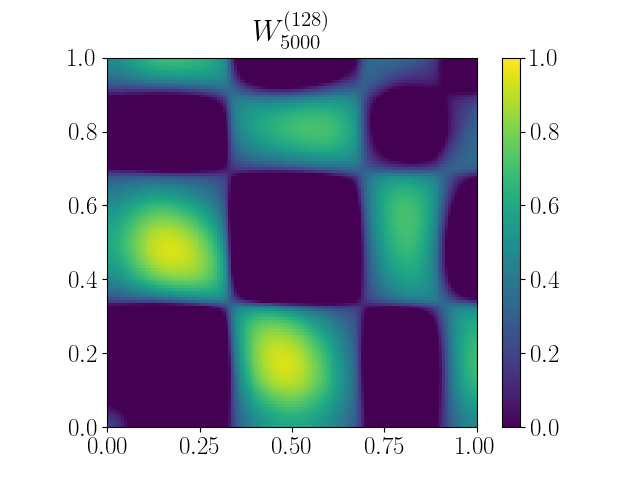}}
    \subcaptionbox{$W^{(128)}_{10^4}$\label{fig:mantel_final}}{\includegraphics[width=0.3\linewidth]{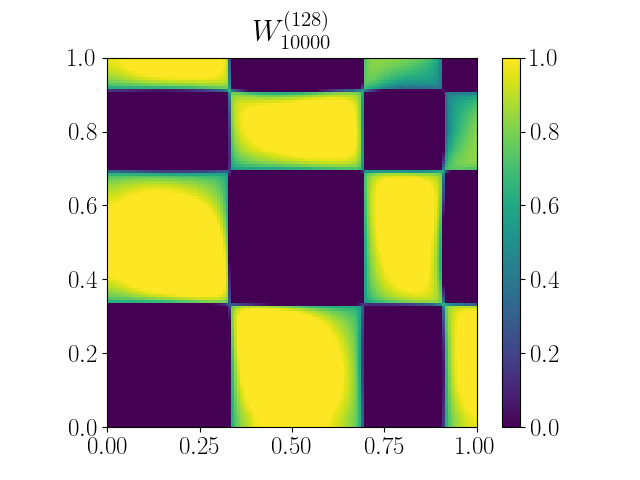}}
    \caption{A gradient descent simulation over $T_{\triangle} - T_{\mathrel{-}}/10$}
    \label{fig:Mantel}
\end{figure}

Linear combinations of the homomorphism density functions are non necessarily strictly convex (see Section~\ref{sec:examples_GF}) and hence the theory does not guarantee exponential rates of convergence. However, in the simulation discussed above, we statistically observe an exponential rate of convergence.

Such a computational and optimization driven approach can be used to study conjectures in extremal graph theory, especially for producing counterexamples. For instance, studying the non-negativity of a linear combination of homomorphism density is an important problem in extremal graph theory and it is known to be undecidable~\cite[Section 16.6.1]{lovasz2012large}. Although our techniques would not yield a proof but it can be a useful tool for the search of counterexamples. We leave this as an interesting direction for further investigation.

\subsection{Extensions and future directions}

A more complicated example of the previous set-up is a neural network (NN) with multiple hidden layers. See Figure~\ref{fig:finite_NN}, where the NN consists of $b\in\Natural$ hidden layers, an initial input $x_0 \in \rr^{d_0}$, and a terminal output $\hat{y}(x_0)\in \rr$ (say), and an intermediate sequence of transformations
\[
    x_0 \mapsto x_1 \in \rr^{d_1} \mapsto x_2\in \rr^{d_2} \mapsto \cdots \mapsto x_b \in \rr^{d_b} \mapsto \hat{y}.
\]
Each transformation involves a matrix $A_{i+1}\in\Rd{d_{i+1}\times d_i}$, a vector $\beta_{i+1}\in\Rd{d_{i+1}}$, and the transformation is defined as
\begin{align}
    x_{i+1} &= \sigma\round{A_{i+1}x_i + \beta_{i+1}},\label{eq:layer_update}
\end{align}
for all $i\in\Set*{0}\cup\squarebrack{b-1}$, where the function $\sigma\colon \rr\to\rr$ acts coordinate-wise. Finally, take $\hat{y}$ to be just the average of elements in $x_b$. 

\def\layersep{2cm}

\begin{figure}
    \centering
    \begin{tikzpicture}[shorten >=1pt,->,draw=black!50, node distance=\layersep]
    \tikzstyle{every pin edge}=[<-,shorten <=1pt]
    \tikzstyle{neuron}=[circle,fill=black!25,minimum size=17pt,inner sep=0pt]
    \tikzstyle{input neuron}=[neuron, fill=green!50];
    \tikzstyle{output neuron}=[neuron, fill=red!50];
    \tikzstyle{hidden neuron}=[neuron, fill=blue!50];
    \tikzstyle{annot} = [text width=4em, text centered]

    \def\inp{4}
    \def\hone{5}
    \def\hi{4}
    \def\hipone{5}
    \def\hb{3}
    
    \foreach \name / \y in {1,...,\inp}
        \node[input neuron] (I-\name) at (0,-\y) {};

    \foreach \name / \y in {1,...,\hone}
        \path[yshift= (\hone-\inp)*0.5 cm]
            node[hidden neuron] (H1-\name) at (\layersep,-\y cm) {};

    \path[yshift=0cm]
    node[] (dots1) at (1.5*\layersep,-2.5 cm) {$\cdots$};

    \foreach \name / \y in {1,...,\hi}
        \path[yshift=(\hi-\inp)*0.5cm]
            node[hidden neuron] (Hi-\name) at (2*\layersep,-\y cm) {};

    \foreach \name / \y in {1,...,\hipone}
        \path[yshift=(\hipone-\inp)*0.5cm]
            node[hidden neuron] (Hipone-\name) at (3*\layersep,-\y cm) {};

    \path[yshift=0cm]
    node[] (dots1) at (3.5*\layersep,-2.5 cm) {$\cdots$};

    \foreach \name / \y in {1,...,\hb}
        \path[yshift=(\hb-\inp)*0.5cm]
            node[hidden neuron] (Hb-\name) at (4*\layersep,-\y cm) {};

    \node[output neuron,pin={[pin edge={->}]right:$\hat{y}(x_0)$}] (O) at (5*\layersep,-2.5 cm) {};

    \foreach \source in {1,...,\inp}
        \foreach \dest in {1,...,\hipone}
            \path (I-\source) edge (H1-\dest);

    \foreach \source in {1,...,\hi}
        \foreach \dest in {1,...,\hipone}
            \path (Hi-\source) edge (Hipone-\dest);

    \foreach \source in {1,...,\hb}
        \path (Hb-\source) edge (O);

    \node[annot,above of=I-1, node distance=1cm] (i) {$x_0$};
    \node[annot,above of=H1-1, node distance=1cm] (h1) {$x_1$};
    \node[annot,above of=Hi-1, node distance=1cm] (h2) {$x_i$};
    \node[annot,above of=Hipone-1, node distance=1cm] (h2) {$x_{i+1}$};
    \node[annot,above of=Hb-1, node distance=1cm] (hb) {$x_b$};
    
    \node[annot,below of=I-\inp, node distance=1cm] (di) {$d_0$};
    \node[annot,below of=H1-\hone, node distance=1cm] (dh1) {$d_1$};
    \node[annot,below of=Hi-\hi, node distance=1cm] (dh2) {$d_i$};
    \node[annot,below of=Hipone-\hipone, node distance=1cm] (dh2) {$d_{i+1}$};
    \node[annot,below of=Hb-\hb, node distance=1cm] (dhb) {$d_b$};
\end{tikzpicture}
    \caption{Finite width Neural Network with multiple hidden layers.}
    \label{fig:finite_NN}
\end{figure}

Given some probability model $\mu$ on $\rr^{d_0} \times \rr$, the goal is to minimize the risk function $R$, given by quadratic loss here for specificity, 
\begin{align}
    R\left( A_{i}, \beta_{i},\; i \in [b-1] \right)\coloneqq \Exp{(X,Y)\sim \mu}{\round{Y-\hat{y}(X)}^2},\label{eq:risk_def}
\end{align}
where the minimization is over all choices of the sequence of matrices $A_{i}$ and vectors $\beta_i$, for $i\in\squarebrack{b-1}$. Let us ignore the vectors $\beta_i$ from our discussion. The entries of the matrix $A_{i+1}$ can be thought of as associated with the edges of the bipartite graph connecting the nodes in layers $i$ and $i+1$. The output $R$ in equation~\eqref{eq:risk_def} does not depend on the labeling of the nodes in either layer, in the sense that if we relabel the nodes and correspondingly permute the rows and columns of $A_{i+1}$ the output $R$ remains the same. Therefore the risk function $R$ can be thought of as a function of edge weights of a sequence of bipartite graphs that is invariant under vertex relabeling. A gradient descent algorithm on the NN tries to compute the Euclidean gradient flow with respect to these edge weights to reach the minimum value of $R$. One can again ask the question: as the number of vertices in each layer goes to infinity, is there a scaling limit for the gradient flow of $R$? This is a multivariate generalization of our set-up in this paper. Instead of a single graph we have a sequence of $b$ graphs, all bipartite, and successive graphs share vertices. Nevertheless, if we assume that all the dimensions $d_0, \ldots, d_b$ go to infinity at the same rate while the edge weights remain bounded, the sequence of $b$ matrices converge in the space of graphons in the cut metric topology~\cite{lovasz2006limits, lovasz2012large}. Although we do not take up this multivariate extension in this paper, it serves as an inspiration for our current work. Also, the theory developed here does not cover this case of noisy gradient flows since the technology needed there are quite different. This construction is left for our upcoming work.

One may also wonder if our theory applies to hydrodynamic limits of natural stochastic gradient flows of random graphs or matrices~\cite{CraneAAP, AHR21}. This is what happens, for example, to particle systems diffusing according to the Fokker-Planck/Smoluchowski equations~\cite{JKO98}.

\subsection{Notations}
For any set $X$, we use $X^2$ to denote the usual Cartesian product $X\times X$ while $X^{(2)}$ is used to denote the set $X^2/{\sim}$ where we identify $(a,b)\sim(b,a)$ for all $a,b\in X$. We use this notation for domains of symmetric functions like kernels and arrays. In particular, the notation $[-1, 1]^{[n]^{(2)}}$ is used, with an abuse of notation, to mean the space of all symmetric matrices with entries in $[-1, 1]$. Similarly, for $\mathbb{R}^{[n]^{(2)}}$. 

Throughout the article we use the symbols $A$, $X$, $Y$ and $Z$ (and their variations with sub/superscripts, hats, tildes and primes) to represent matrices. For example, $X_{k}, A_k$ would stand for $k\times k$ (symmetric) matrices.

We use the letters $U$, $V$, $W$ to represent \emph{kernels}, that is, symmetric real-valued Borel measurable functions on the unit square. The set of all kernels is denoted by $\Wcal$ and $\Graphons$ denotes the set of graphons. 

For a kernel $V$, we always use $[V]$ to represent its corresponding graphon. Note that a $k\times k$ matrix, say $A_k$, can be naturally embedded in the space of kernels (see Section~\ref{sec:background} for details). We use $K(A_k)$ to denote the kernel corresponding to the matrix $A_k$.

\subsection{Organization of the paper} Section~\ref{sec:background} develops the background material on  graphons. The relevant definitions and results from the existing literature are given in Section~\ref{sec:background}. It also discusses concepts from the general theory of gradient flow on metric spaces that are required in this paper.

Section~\ref{sec:prelims} we prove some preliminary results. In particular, we show that the set of graphons $(\Graphons, \delta_2)$ equipped with the invariant $\delta_2$ metric is a geodesic metric space. This allows us to talk about convexity and so on. Another important takeway from this section is Lemma~\ref{lem:representation_graphons} which shows that every absolutely continuous curve in $(\Graphons, \delta_2)$ can be obtained as the natural image of an absolutely continuous curve in $(\Wcal, d_2)$.

Section~\ref{sec:grad_flows} is forms the core of the technical part of this paper. In particular, it introduces one of our key tools, namely Fr\'echet-like derivatives and shows the existence of curves of maximal slopes (a.k.a. gradient flow). Another important aspect of this section is to show that the the gradient flow can be written as the image of an absolutely continuous curve in $(\Wcal, d_2)$ and this absolutely continuous curve admits a simple and explicit form. This section also explores the relation between the the Fr\'echet-like derivative of a function on $\Graphons$ and the Euclidean derivative when this function is restricted to the finite dimensional symmetric matrices. This is particularly important in the proof of our main theorem. Section~\ref{sec:convergence} gives the proof of Theorem~\ref{thm:GF_convergence}. Finally, Section~\ref{sec:continuity_eq} discusses the continuity equations that are associated to the gradient flows. 

Section~\ref{sec:examples_GF} works out examples of functions on graphons for which we can compute Fr\'echet-like derivatives and gradient flows. This includes the well-known graph homomorphism functions and scalar entropy and their linear combinations. Parallels are drawn to the well-known classes of functions studied in optimal transport: potential energy, interaction energy, and internal energy. 
\section{Background material}\label{sec:background}

In Section~\ref{sec:topologies_on_graphons}, we introduce the required metric on graphons and other properties related to graphons. The material in this section is mostly borrowed from~\cite{lovasz2012large, janson2016graphons}. In Section~\ref{sec:grad_flow_graphons}, we introduce the necessary terminology to talk about the gradient flow on a metric space. The material in that subsection is adapted from~\cite{ambrosio2005gradient}. Readers familiar with theory of graphons~\cite{lovasz2012large} and the theory of gradient flow in arbitrary metric spaces~\cite{ambrosio2005gradient} may safely skip this section.

\subsection{Graphons and Metrics on Graphons}\label{sec:topologies_on_graphons}
Recall that a kernel $W\colon [0, 1]^{(2)}\to [-1, 1]$ is a Borel measurable, symmetric function. On the space of kernels, $\Wcal\subset L^2\big([0,1]^{(2)}\big)$, we have the usual $L^2$ norm, $\enorm{}$, that is, $\enorm{W}^2\coloneqq \int_{[0, 1]^2}\abs{W(x, y)}^2\diff x \diff y$. We also define the cut norm, denoted $\cutnorm{}$, on $\Wcal$ as follows. 

\begin{definition}[Cut norm]\label{def:cut_norm}
The {\em cut norm} $\cutnorm{}\colon\Wcal \to \rr_+$ is defined as
\begin{align*}
    \cutnorm{W} &\coloneqq \sup_{S,T\subseteq \interval{0,1}}\abs{\int_{S\times T} W(x,y)\diff x \diff y}\\
    &= \sup_{\norm{\infty}{f}, \norm{\infty}{g} \le 1}\abs{\int_{\interval{0,1}^{2}} W(x,y)f(x)g(y)\diff x\diff y},
\end{align*}
for all $W\in\Wcal$ where $S$ and $T$ are Borel measurable subsets of $\interval{0,1}$, and $f$ and $g$ are Borel measurable functions on $\interval{0,1}$, and $\norm{\infty}{}$ denotes the usual $L^\infty$ norm.
\end{definition}
The cut norm was first introduced in~\cite{frieze1999quick} in the context of matrices and was later extended to kernels in~\cite{borgs2008convergent}. The cut norm is used to define a metric called the cut metric, $\delta_\cut$, on the space of graphons. In the following definitions we use $\Tcal$ to denote the set of all Lebesgue measure preserving maps $\varphi\colon[0, 1]\to [0, 1]$, and $\Ical$ to denote the set of all invertible Lebesgue measure preserving maps $\varphi\colon[0, 1]\to [0, 1]$. Given a kernel $W\in\Wcal$ and a Lebesgue measure preserving map $\varphi\in\Tcal$, one can define $W^\varphi\in\Wcal$ as $W^\varphi(x,y) \coloneqq W(\varphi(x),\varphi(y))$ for Lebesgue a.e. $x,y\in\interval{0,1}$.

From hereon in the text, we will always refer to Lebesgue measure preserving transformations, and Lebesgue almost everywhere (a.e.) unless explicitly specified otherwise.

For any $k\in\Natural$, let $Q_k \coloneqq \Set*{Q_{k,i}}_{i\in[k]}$ be a partition of $\interval{0,1}$ into intervals of equal Lebesgue measure, for example $Q_{k,1} \coloneqq \interval{0,1/k}$ and $Q_{k,i}\coloneqq ((i-1)/k,i/k]$ for $i\in[k]\setminus\Set*1$. For every permutation $\pi\in S_k$ we can define an invertible Lebesgue measure preserving map $\tilde\pi\colon\interval{0,1}\to\interval{0,1}$ such that $\tilde{\pi}$ is an increasing affine homeomorphism from $Q_{k,i}$ to $Q_{k,\pi(i)}$ for each $i\in\squarebrack{k}$. We denote the set of all such maps by the set $\Ical_k$.

\begin{definition}[Cut metric~{\cite[Section 3.2]{borgs2008convergent}}]\label{def:cut_metric}
The {\em cut metric} $\delta_\cut\colon \Graphons\times \Graphons \to \rr_+$ is defined as
\begin{align}
    \begin{split}
        \cutmetric{\squarebrack{W_0}}{\squarebrack{W_1}} &\coloneqq \inf_{\varphi_0,\varphi_1\in\Tcal}\cutnorm{W_0^{\varphi_0} - W_1^{\varphi_1}}\\
        &= \inf_{\varphi\in\Ical}\cutnorm{W_0 - W_1^{\varphi}}
        = \lim_{k\to\infty}\min_{\tilde\pi\in\Ical_k}\cutnorm{W_0 - W_1^{\tilde\pi}},
    \end{split}
    \label{eq:cut_metric}
\end{align}
for all $\squarebrack{W_0}, \squarebrack{W_1}\in\Graphons$, where the latter two equalities are due to~\cite[Lemma 3.5]{borgs2008convergent}.
\end{definition}

It is worth pointing that $\delta_{\cut}$ naturally extends to kernels, but it only defines a pseudometric on $\Wcal$. In fact, $\delta_{\cut}(W_1, W_2)=0$ if and only if there exists $U\in \Wcal$ and Lebesgue measure preserving transforms $\varphi_1, \varphi_2\colon [0, 1]\to [0, 1]$ such that $W_i=U^{\varphi_i}$, for $i\in\squarebrack{2}$. The kernels $W_1, W_2$ in this case are said to be \textit{weakly isomorphic}. In other words, graphons can be defined as the class of kernels identified up to weak isomorphism. We can also define the so-called invariant $L^2$ metric on $\Graphons$.
\begin{definition}[Invariant $L^2$ metric~\cite{borgs2016sparse,janson2016graphons}]\label{def:L2_metric}
    The {\em invariant $L^2$ metric} $\delta_2 \colon \Graphons\times \Graphons \to \rr_+$ is defined as
    \begin{align}
        \delta_2\round{\squarebrack{W_0},\squarebrack{W_1}} &\coloneqq \inf_{\varphi_1,\varphi_2\in\Tcal}\norm{2}{W_0^{\varphi_1}-W_1^{\varphi_2}} = \inf_{\varphi\in\Ical}\norm{2}{W_0 - W_1^{\varphi}},
    \end{align}
    for all $\squarebrack{W_0},\squarebrack{W_1}\in\Graphons$, where $\norm{2}{}\colon L^2\big(\interval{0,1}^{2}\big) \to \rr_+$ is the usual $L^2$-norm, and the second equality is a consequence of~\cite[Theorem 8.13]{lovasz2012large}.
\end{definition}
We denote the metrics induced by the cut norm and the $L^2$-norm as $d_\cut$ and $d_2$ respectively. The space $(\Graphons, \delta_\cut)$ is a compact metric space~\cite{Lovsz2007SzemerdisLF},~\cite[Section 9.3]{lovasz2012large} while the metric space $(\Graphons,\delta_2)$ is complete and separable, but not compact. It is clear that convergence in $\delta_2$ implies the convergence in $\delta_{\cut}$, that is, the topology generated by $\delta_{\cut}$ is weaker than the topology generated by $\delta_{2}$. The following Lemma says that the metric $\delta_2$ is lower semicontinuous with respect to $\delta_{\cut}$.

\begin{lemma}{~\cite[Lemma 14.16]{lovasz2012large}}
\label{lem:delta_2_lsc}
    \sloppy The metric $\delta_2$ is sequentially $\delta_\cut$-lower semicontinuous, i.e., if sequences $\round{\squarebrack{U_n}}_{n\in\Natural}, \round{\squarebrack{V_n}}_{n\in\Natural} \subset\Graphons$, and $\squarebrack{U},\squarebrack{V}\in\Graphons$ such that $\round{\squarebrack{U_n}}_{n\in\Natural}\xrightarrow{\delta_\cut}\squarebrack{U}$ and $\round{\squarebrack{V_n}}_{n\in\Natural}\xrightarrow{\delta_\cut}\squarebrack{V}$, then 
    \[
        \liminf_{n\to\infty}\delta_2\round{\squarebrack{U_n},\squarebrack{V_n}} \geq \delta_2\round{\squarebrack{U},\squarebrack{V}}.
    \]
\end{lemma}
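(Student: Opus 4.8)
The plan is to recast $\delta_2$ as an operator distance on $L^2\interval{0,1}$, extract a limiting ``alignment'' by weak‑operator compactness, and pass to the limit using that the operators attached to kernels are compact. For $W\in\Wcal$ let $T_W$ be the integral operator $f\mapsto\int_0^1 W(\cdot,y)f(y)\diff y$; it is self‑adjoint, compact, and Hilbert--Schmidt with $\normF{T_W}=\enorm{W}$. For $\varphi\in\Ical$ the Koopman unitary $P_\varphi\colon f\mapsto f\circ\varphi$ satisfies $T_{W^\varphi}=P_\varphi^\ast T_W P_\varphi$, so that
\[
    \delta_2\round{\squarebrack{U},\squarebrack{V}}=\inf_{\varphi\in\Ical}\normF{T_U-P_\varphi^\ast T_V P_\varphi}.
\]

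Given $\squarebrack{U_n}\xrightarrow{\delta_\cut}\squarebrack{U}$ and $\squarebrack{V_n}\xrightarrow{\delta_\cut}\squarebrack{V}$, I would first choose representatives with $\cutnorm{U_n-U}\to0$ and $\cutnorm{V_n-V}\to0$ (harmless, since $\delta_2$ is graphon‑invariant, and done independently for the two sequences). The elementary inequality $\opnorm{T_W}^3\le 4\,\cutnorm{W}\,\infnorm{W}^2$ — whose proof is that a top eigenfunction $f$ of $T_W$ obeys $\abs{f}\le\infnorm{W}\opnorm{T_W}^{-1}\enorm{f}$ pointwise, after which $\opnorm{T_W}=\abs{\inner{T_W f,f}}/\enorm{f}^2\le 4\,\cutnorm{W}\,\infnorm{f}^2/\enorm{f}^2$ — then upgrades cut convergence to $T_{U_n}\to T_U$ and $T_{V_n}\to T_V$ in operator norm. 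Next, pick $\varphi_n\in\Ical$ with $\normF{T_{U_n}-P_{\varphi_n}^\ast T_{V_n}P_{\varphi_n}}\le\delta_2\round{\squarebrack{U_n},\squarebrack{V_n}}+1/n$; pass to a subsequence realizing $a\coloneqq\liminf_n\delta_2\round{\squarebrack{U_n},\squarebrack{V_n}}$, and a further subsequence along which $P_{\varphi_n}\to Q$ in the weak operator topology, using that the unit ball of $B\round{L^2\interval{0,1}}$ is weak‑operator compact and metrizable.

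To pass to the limit, fix a finite‑rank orthogonal projection $P$. Then $P\round{T_{U_n}-P_{\varphi_n}^\ast T_{V_n}P_{\varphi_n}}P\to P\round{T_U-Q^\ast T_V Q}P$ in Hilbert--Schmidt (equivalently, finite‑dimensional) norm: the $T_{U_n}$‑part converges by weak‑operator convergence of $T_{U_n}$; the $T_{V_n}$‑part converges by the operator‑norm convergence of $T_{V_n}$ together with a finite‑rank truncation of the compact operator $T_V$; and $P'P_{\varphi_n}P\to P'QP$ in norm for any finite‑rank projections $P,P'$, because on finite‑dimensional spaces weak and norm convergence coincide. Since $\normF{A}\ge\normF{PAP}$ and $\normF{A}=\sup_P\normF{PAP}$, taking $\liminf_n$ and then $\sup_P$ yields $a\ge\normF{T_U-Q^\ast T_V Q}$.

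The main obstacle is the final step, converting $a\ge\normF{T_U-Q^\ast T_V Q}$ into $a\ge\delta_2\round{\squarebrack{U},\squarebrack{V}}$, which is \emph{not} formal: $Q$ is only a contraction (a weak limit of unitaries), so $Q^\ast T_V Q=T_Z$ for a kernel $Z$ that is a weak $L^2$‑limit of rearrangements of $V$ and may satisfy $\enorm{Z}<\enorm{V}$ — a quasirandom part of $V_n$ can escape in the weak limit — so a priori $\normF{T_U-Q^\ast T_V Q}=\enorm{U-Z}$ could fall strictly below $\delta_2\round{\squarebrack{U},\squarebrack{V}}$. The resolution is that $T_{U_n}$ and $P_{\varphi_n}^\ast T_{V_n}P_{\varphi_n}$ are Hilbert--Schmidt‑close, hence leak $L^2$‑mass in a \emph{matched} way: writing the maximal‑overlap identity $\delta_2\round{\squarebrack{U_n},\squarebrack{V_n}}^2=\enorm{U_n}^2+\enorm{V_n}^2-2\rho(U_n,V_n)$ with $\rho(A,B)=\sup_{\varphi}\inner{A,B^\varphi}$, and tracking $\enorm{U_n}^2\to\beta_U$, $\enorm{V_n}^2\to\beta_V$, one gets $a^2\ge\enorm{U-Z}^2+\round{\sqrt{\beta_U-\enorm{U}^2}-\sqrt{\beta_V-\enorm{Z}^2}}^2$, and it remains to show that $\rho(U,V)-\inner{U,Z}$ covers the deficit $\enorm{V}^2-\enorm{Z}^2$. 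Making this precise is the genuine content of the lemma; equivalently one can argue via the weak regularity lemma, splitting $U_n,V_n$ into a structured part (on which $\delta_\cut$ and $\delta_2$ are comparable and everything passes to the limit) and a quasirandom remainder (which only increases $\delta_2$). I would complete the argument along the lines of \cite[Lemma~14.16]{lovasz2012large}.
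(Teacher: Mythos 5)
The paper does not prove this lemma; it simply cites \cite[Lemma 14.16]{lovasz2012large}, so there is no internal proof to compare against. Treating your attempt on its own merits: the operator reformulation, the inequality $\opnorm{T_W}^3\lesssim\cutnorm{W}\infnorm{W}^2$ upgrading cut--norm to operator--norm convergence, the extraction of the weak--operator limit $Q$, and the finite--rank--projection argument yielding $a\ge\normF{T_U-Q^\ast T_VQ}=\enorm{U-Z}$ are all sound, and you have correctly located the obstruction: $Q$ is only a contraction, so $Z$ (the weak $L^2$ limit of the aligned representatives $V_n^{\varphi_n}$) need not be weakly isomorphic to $V$, and indeed $\enorm{Z}\le\enorm{V}$ may be strict.

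The problem is that your proposed patch does not actually make progress, and the proof stands incomplete. Your refined estimate $a^2\ge\enorm{U-Z}^2+\bigl(\sqrt{s}-\sqrt{t+u}\bigr)^2$ (with $s=\beta_U-\enorm{U}^2$, $t=\beta_V-\enorm{V}^2$, $u=\enorm{V}^2-\enorm{Z}^2$) is correct, and comparing it to $\delta_2^2\round{\squarebrack{U},\squarebrack{V}}$ reduces the lemma to showing
\[
\bigl(\sqrt{s}-\sqrt{t+u}\bigr)^2\;\ge\;\delta_2^2\round{\squarebrack{U},\squarebrack{V}}-\enorm{U-Z}^2 \;=\; u-2\bigl(\rho(U,V)-\inner{U,Z}\bigr).
\]
But expanding $\rho(U,V)=\tfrac12\bigl(\enorm{U}^2+\enorm{V}^2-\delta_2^2\round{\squarebrack{U},\squarebrack{V}}\bigr)$ and $\inner{U,Z}=\tfrac12\bigl(\enorm{U}^2+\enorm{Z}^2-\enorm{U-Z}^2\bigr)$ gives the identity $\rho(U,V)-\inner{U,Z}=\tfrac{u}{2}+\tfrac12\bigl(\enorm{U-Z}^2-\delta_2^2\round{\squarebrack{U},\squarebrack{V}}\bigr)$, so the right--hand side above equals $\delta_2^2\round{\squarebrack{U},\squarebrack{V}}-\enorm{U-Z}^2$ \emph{tautologically}, and in the critical case $s=t+u$ (where the left side vanishes) the claim collapses back to the very inequality $\enorm{U-Z}\ge\delta_2\round{\squarebrack{U},\squarebrack{V}}$ that the contraction issue prevents you from asserting. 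The ``matched leakage'' bookkeeping therefore only disposes of the easy case; it does not close the gap in the case where it is needed. The alternative route you gesture at (weak regularity, splitting structured from quasirandom parts) is plausible and is closer in spirit to what one would actually do, but it is not carried out, and ``I would complete the argument along the lines of Lovász'' concedes the proof rather than giving one. As written, the proposal is an informative partial analysis but not a proof of the lemma.
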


As we mentioned in the Section~\ref{sec:intro}, the gradient flow on the space of graphons will be with respect to the invariant $L^2$ metric, but the convergence statements will be with respect to the topology generated by the cut metric.

Let us mention in passing that the invariant $L^2$ metric is closely related to the popular Gromov-Wasserstein metric~\cite{GWMemoli} used to compare two metric measure spaces or their sample equivalents~\cite{GWsample}. This can be seen by considering $\interval{0,1}$ as a metric measure space where the measure is the Lebesgue measure and, for a given bounded metric $\mathbf{d}$, one defines a graphon as $W(x,y)=\mathbf{d}(x,y)$ for $x,y\in[0,1]$. Then the Gromov-Wasserstein distance (for $p=2$) between $(\interval{0,1}, \lambda_{[0,1]}, \mathbf{d})$ and $(\interval{0,1}, \lambda_{[0,1]}, \mathbf{d}')$, for two distances $\mathbf{d}$ and $\mathbf{d}'$, is the same as computing the invariant $L^2$ distance between the corresponding graphons. In this vein also see the unpublished article \cite{Sturm12} which constructs gradient flows on the space of metric measure spaces in a spirit that is quite similar to ours.

\subsubsection{Block Graphons, Matrices and Graphons}\label{sec:matrix_kernel_graphon}

For any $k\in\Natural$, define the set of kernels $\Wcal_k\subset \Wcal$ which contain kernels which are constant a.e. over sets in $Q_k\times Q_k$. The set $\Wcal_k$ can be naturally identified with a convex subset of the finite dimensional vector space of symmetric $k\times k$ matrices. Since this identification will be used often, we make it a definition.
\begin{definition}[Kernels and finite symmetric matrices]\label{def:K_Mk}
    For any $k\in\Natural$, and a symmetric matrix $A\in\Mcal_k$, the kernel $K(A)$ corresponds to the element in $\Wcal_k$ which takes the constant value $A_{i,j}$ on $Q_{k,i}\times Q_{k,j}$ for $i,j\in[k]$. The inverse map from $\Wcal_k$ to $\Mcal_k$ is denoted by $M_k$.
\end{definition}
Kernels in $\Wcal_k$ can be thought of as adjacency matrices of vertex labeled graphs with edge weights. For a finite graph $G = (V, E)$ with vertices labeled as $\squarebrack{k}$ and associated weights with every edge with weight $w(\Set*{i,j})\in\interval{-1,1}$ for $\Set*{i,j}\in E$, we can construct its adjacency matrix $A\in\Mcal_k$ by defining 
\[
    A_{i,j} \coloneqq w(\Set*{i,j})\indicator{}{\Set*{i,j}\in E},\quad i,j\in\squarebrack{k}.
\]
Thus, we can also map vertex labeled graphs with weights associated with its edges to kernels by considering $K(A)\in\Wcal_k$. Naturally, we can also define $\Graphons_k \coloneqq \Wcal_k/{\cong}$. Given a graphon $[W]\in\Graphons$ and $k\in\Natural$, we can obtain a $k\times k$ exchangeable symmetric array as described in the following definition. 

\begin{definition}[Sampling random graphs from graphons]\label{def:sampling_from_graphon}
    Starting with a graphon $\squarebrack{W}\in\Graphons$, for any $k\in\Natural$ we can sample a random graph $G_k{\squarebrack{W}}$ of size $k$ as follows. Consider any representative element $W\in\squarebrack{W}$ and sample $k$ i.i.d. elements $\Set*{U_i}_{i=1}^k$ uniformly at random from $\interval{0,1}$, and assign edge weight $W(U_i, U_j)$ to edge $\Set*{i,j}$ for all $(i,j)\in\squarebrack{k}^{(2)}$. By an abuse of notation, we also denote the exchangeable symmetric $k\times k$ weighted adjacency matrix of this random graph by $G_k\squarebrack{W}$. The distinction will be apparent from the context. In either case, $G_k\squarebrack{W}$ is measurable with respect to $\sigma(\Set*{U_i}_{i=1}^k)$. 
\end{definition}

\begin{definition}[Extensions to $L^p$ kernels for $p\in\interval{1,\infty}$]\label{def:Lp_extensions}
    Sometimes in our text, we will consider kernels and matrices whose entries are not necessarily in $\interval{-1,1}$, but are rather elements in $L^2\big(\interval{0,1}^{(2)}\big)$ or $L^\infty\big(\interval{0,1}^{(2)}\big)$. For any $k\in\Natural$, just like we defined $\Wcal_k$, we can restrict our attention to the subset of functions $L^p_k\big(\interval{0,1}^{(2)}\big) \subset L^p\big(\interval{0,1}^{(2)}\big)$ for every $p\in\interval{1,\infty}$ which contain symmetric measurable step functions over $Q_k\times Q_k$. Using the equivalence relation $\cong$, just like we defined $\Graphons$ and $\Graphons_k$, we can similarly define $\widehat{L}^p\big(\interval{0,1}^{(2)}\big) \coloneqq L^p\big(\interval{0,1}^{(2)}\big)/{\cong}$ and $\widehat{L}^p_k\big(\interval{0,1}^{(2)}\big) \coloneqq L^p_k\big(\interval{0,1}^{(2)}\big)/{\cong}$ for any $p\in\interval{1,\infty}$. When it is clear from the context, we will also call the elements in $\widehat{L}^{\infty}$ graphons. For simplicity, we use $K$ and $M_k$ for $k\in\Natural$ from Definition~\ref{def:K_Mk} even when the kernels are in $L^p\big(\interval{0,1}^{(2)}\big)$ for $p\in\interval{1,\infty}$.
\end{definition}

\subsection{Gradient Flows on metric spaces}\label{sec:grad_flow_graphons}
The theory of gradient flow on a general metric space is well-developed by now and can be found in~\cite{ambrosio2005gradient}. Since our goal is to define gradient flows on $(\Graphons,\delta_2)$, the definitions below are sometimes not the most general versions as given in~\cite{ambrosio2005gradient} but adapted to our particular setting.

\begin{definition}[Absolutely continuous curves]\label{def:ac_curve}
    \sloppy For a metric space $(X,d)$, and any time horizon $T\in\rr_+$, a curve $\omega = \round{\omega_t}_{t\in[0,T]}$ in $X$ is {\em absolutely continuous} with respect to the metric $d$ if there exists $m\in L^1(\interval{0,T})$ such that for all $0\leq r<s\leq T$
    \begin{align}
        d\round{\omega_r,\omega_s} &\leq \int_{r}^s m(t)\diff t.\label{eq:absolutely_continuous_ineq}
    \end{align}
    The set of such absolutely continuous curves will be denoted as $\mathrm{AC}(X,d)$.
\end{definition}

\begin{definition}[Metric derivative]\label{def:metric_derivative}
    For a metric space $(X,d)$, and any $T\in\rr_+$, the {\em metric derivative} $\abs{\omega'}(t)$ of a curve  $\omega = \round{\omega_t}_{t\in[0,T]}$ in $X$ at $t\in\round{0,T}$ is defined as
    \begin{align}
        \abs{\omega'}(t) \coloneqq \lim_{s\to t}\frac{d\round{\omega_s,\omega_t}}{\abs{s-t}},\label{eq:metric_derivative}
    \end{align}
    provided this limit exists.
\end{definition}

If $\omega\in\mathrm{AC}\round{X, d}$, then the limit in equation~\eqref{eq:metric_derivative} exists for a.e. $t\in\round{0,T}$ and $\abs{\omega'}\in L^1\round{\interval{0,T}}$~\cite[Theorem 1.1.2]{ambrosio2005gradient}. In other words, every absolutely continuous curve in a metric space has metric derivative defined almost everywhere. And conversely, if the metric derivative $\abs{\omega'}(t)$ exists for a.e. $t\in (0, T)$ and $\abs{\omega'}\in L^1([0, T])$, then $\omega$ is absolutely continuous. 

We now need to define some notion for the derivative of a function $F\colon X\to \mathbb{R}\cup \{\infty\}$. On a metric space the usual notion of derivative can not be defined. However, the following~\cite[Definition 1.2.4]{ambrosio2005gradient} acts as a substitute in many situations of interest.

\begin{definition}[Local slope]\label{def:local_slope}
    The {\em local slope} $\abs{\pdiff F}(v)$ of $F\colon X\to\rr\cup\Set*{+\infty}$ on a metric space $(X,d)$, at $v\in \effdom(F)$ is defined as
    \begin{align}
        \abs{\pdiff F}(v) \coloneqq \limsup_{w\in X,\,d(v,w)\to 0}\frac{\round{F(v)-F(w)}^+}{d(v,w)}.
    \end{align}
\end{definition}

The definition below is narrower than the one in~\cite[Definition 1.3.2]{ambrosio2005gradient} since we restrict our choice of \textit{upper gradient} in that definition to the local slope~\cite[Theorem 1.2.5]{ambrosio2005gradient}.

\begin{definition}[Curves of maximal slope]\label{def:curves_of_maximal_slope}
    On a metric space $(X,d)$, any locally absolutely continuous curve $\omega = \round{\omega_t}_{t\in[0,T]}$ in $X$ on a finite time horizon $T>0$ is a {\em curve of maximal slope} for the function $F\colon X\to\rr\cup\Set*{+\infty}$ with respect to its local slope, if $F\circ \omega =G$ a.e. for some non-increasing map $G$ on $(0,T)$, and
    \begin{align}
        G'(t) &\leq -\inv{2}\abs{\omega'}^2(t) - \inv{2}\abs{\pdiff F}^2(\omega_t), \qquad \text{a.e.}\quad t\in (0, T).
    \end{align}
\end{definition}

On a general metric space, a curve of maximal slope can be referred to as a gradient flow although the concept of gradient itself is absent. See~\cite[Section 1.3]{ambrosio2005gradient} for the intuition. 

\begin{definition}[Length]\label{def:length}
    Given the metric space $(X,d)$, and a curve  $\omega = \round{\omega_t}_{t\in[0,T]}$ in $X$, the {\em length} of $\omega$ is defined as
    \begin{align*}
        \ell(\omega) &\coloneqq \sup\Set*{\sum_{k=0}^{n-1} d\round{\omega_{t_k},\omega_{t_{k+1}}} \given n\in\Natural, 0 = t_0 < t_1 < \cdots < t_n = T }\ .
    \end{align*}
\end{definition}
It is clear from Definition~\ref{def:length} that for any absolutely continuous curve  $\omega = \round{\omega_t}_{t\in[0,T]}$ in $X$ and $x,y\in X$ such that $\omega_0=x$, $\omega_T=y$, we have $\ell(\omega)\ge d(x, y)$. Given $x, y\in X$ it is natural to ask if there is an absolutely continuous curve $\omega$ from $x$ to $y$ that achieves the length $\ell(\omega)=d(x, y)$. Such a curve is called a \emph{geodesic} between $x$ and $y$. If there exists a geodesic $\omega$ between any two points $x, y\in X$, we say that $(X, d)$ is a geodesic metric space. In a geodesic metric space, notions like convexity and semiconvexity make sense. We make those precise in the following definitions.

\begin{definition}[Geodesic metric space]\label{def:geodesic_metric_space}
    A metric space $(X,d)$ is called a {\em geodesic metric space} if for all $x,y\in X$
    \begin{align*}
        d(x,y) &= \min\Set*{\ell(\omega) \given \omega\in\mathrm{AC}(X,d), \omega_0 = x, \omega_1 = y}\ .
    \end{align*}
\end{definition}

\begin{definition}[Constant speed geodesics]\label{def:constant_speed_geodesics}
    On a metric space $(X,d)$, a curve  $\omega = \round{\omega_t}_{t\in[0,1]}$ in $X$ is a {\em constant speed geodesic} if for all $0\leq r\leq s\leq 1$,
    \begin{equation}
    \label{eqn:def_constant_speed}
        \metric{d}{\omega_r}{\omega_s} = \metric{d}{\omega_0}{\omega_1}\round{s-r}.
    \end{equation}
\end{definition}

Note that if a curve $\omega$ satisfies equation~\eqref{eqn:def_constant_speed}, then $\omega$ is clearly Lipschitz and hence absolutely continuous. It is easy to see that such a curve $\omega$ is indeed a geodesic and the metric derivative $\abs{\omega'}(t)=d(\omega_0, \omega_1)$ for a.e. $t\in[0,1]$. This justifies the name `constant speed geodesic'. 
\begin{remark}\label{rem:reparameterize}
    It is also worth pointing that not only every geodesic, but every absolutely continuous curve can be reparametrized so that it becomes Lipschitz~\cite[Box 5.1]{santambrogio2015optimal} under the new parametrization.
\end{remark}

We now make precise the notion of convexity in metric spaces. On a metric space, we first define convexity (and semiconvexity) along curves. If a function is convex (or semiconvex) along every constant speed geodesic, then we call it convex with respect to the metric.

\begin{definition}[$\lambda$-semiconvexity along curves w.r.t. a metric]\label{def:lambda_cvx_along_curve}
    On a metric space $(X,d)$, a function $F\colon X\to\rr\cup\Set*\infty$ is said to be {\em $\lambda$-semiconvex} with respect to the metric $d$ along a curve  $\omega = \round{\omega_t}_{t\in[0,1]}$ in $X$ for some $\lambda\in\rr$, if
    \begin{align}
        F(\omega_t) &\leq (1-t)F(\omega_0) + tF(\omega_1) - \inv{2}\lambda t(1-t)\metric{d^2}{\omega_0}{\omega_1},\label{eq:lambda_cvx_along_curve}
    \end{align}
    for all $t\in\interval{0,1}$. Particularly, if the above inequality holds for $\lambda = 0$, then we say that $F$ is convex with respect to the metric $d$ along the curve $\omega$.
\end{definition}
\begin{definition}[$\lambda$-geodesic semiconvexity w.r.t. a metric]\label{def:semiconvexity}
    On a metric space $(X,d)$, a function $F\colon X\to\rr\cup\Set*\infty$ is {\em $\lambda$-geodesically semiconvex} with respect to the metric $d$, if for any $v_0, v_1\in \effdom(F)$ there exists a constant speed geodesic  $\omega = \round{\omega_t}_{t\in[0,T]}$ on $(X,d)$ (Definition~\ref{def:constant_speed_geodesics}) with $\omega_0=v_0$ and $\omega_1=v_1$ such that $F$ is $\lambda$-semiconvex on $\omega$ with respect to the metric $d$ for some $\lambda\in\rr$ (Definition~\ref{def:lambda_cvx_along_curve}).
\end{definition}

\section{Preliminaries}\label{sec:prelims}
In this section, we prove some results that are used in the proof of our main results but are also of independent interest. The two key results in this section are Lemma~\ref{lem:representation_graphons} and Proposition~\ref{prop:graphon_geodesic_space}.  Proposition~\ref{prop:graphon_geodesic_space} states that $(\Graphons, \delta_2)$ is a geodesic metric space.  If $\round{W_t}_{t\in\interval{0,1}}\in\mathrm{AC}(\Wcal, d_2)$. It is easily seen that $\round{\omega_t\coloneqq[W_t]}_{t\in\interval{0,1}}\in\mathrm{AC}(\Graphons, \delta_2)$. Lemma~\ref{lem:representation_graphons} shows that the converse is also true.

\begin{lemma}\label{lem:representation_graphons}
    Let $\omega = \round{\omega_t}_{t\in[0,1]}\in\mathrm{AC}(\Graphons,\delta_2)$. Then there exists $W = \round{W_t}_{t\in\interval{0,1}} \in \mathrm{AC}\round{\Wcal,d_2}$ such that $\omega_t = \squarebrack{W_t}$, and $\delta_2(\omega_t,\omega_s)=\enorm{W_t-W_s}$ for all $s,t\in\interval{0,1}$.
\end{lemma}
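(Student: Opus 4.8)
The plan is to "lift" the absolutely continuous curve $\omega$ from $(\Graphons,\delta_2)$ to $(\Wcal,d_2)$ by carefully choosing measure-preserving rearrangements of representatives along a dense set of times and then passing to a limit. First I would invoke the characterization of absolute continuity: since $\omega\in\mathrm{AC}(\Graphons,\delta_2)$, there is $m\in L^1([0,1])$ with $\delta_2(\omega_r,\omega_s)\le\int_r^s m(t)\diff t$ for all $r<s$. The naive idea — pick any representative $V_t\in\omega_t$ for each $t$ — fails because there is no reason the $L^2$ distances $\enorm{V_t-V_s}$ match $\delta_2(\omega_t,\omega_s)$; the rearrangements realizing the infimum in the definition of $\delta_2$ depend on the pair $(t,s)$. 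The standard fix is to build the curve on a countable dense set $D\subset[0,1]$ (say the dyadic rationals) compatibly, using a diagonal/compactness argument, and then extend by continuity.

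The key steps, in order: (1) For each pair of consecutive dyadics at level $n$, use the second equality in Definition~\ref{def:L2_metric} to pick $\varphi\in\Ical$ realizing $\delta_2$ up to error $2^{-n}$, and compose these rearrangements along the dyadic subdivision to obtain, for each dyadic $t$ at level $n$, a representative $W_t^{(n)}\in\omega_t$. (2) Show the family $\{W_t^{(n)}\}$ is, for fixed $t$, precompact — this is where I would use that $\Wcal$ is a bounded subset of $L^2$ together with weak-$L^2$ compactness, or alternatively weak-$*$ compactness of $(\Graphons,\delta_\cut)$; pass to a subsequence (diagonal over all dyadic $t$) so that $W_t^{(n)}$ converges in an appropriate sense to some $W_t$ for every dyadic $t$. (3) Check that the limiting representatives satisfy $\enorm{W_t-W_s}=\delta_2(\omega_t,\omega_s)$ for all dyadic $s,t$: the inequality $\ge$ is automatic (any common representatives give an $L^2$ distance $\ge\delta_2$, in fact $=\delta_2$ only when optimally aligned, so one must be careful — the right statement is $\enorm{W_t-W_s}\ge\delta_2(\omega_t,\omega_s)$ always, and $\le$ comes from the construction plus lower semicontinuity of the norm under the limit in step (2), combined with the telescoping estimate $\sum\delta_2(\omega_{t_i},\omega_{t_{i+1}})$ controlled by $\int m$). (4) Verify $t\mapsto W_t$ (on dyadics) is uniformly continuous for $d_2$ using the $L^1$ bound $\enorm{W_t-W_s}=\delta_2(\omega_t,\omega_s)\le\int_s^t m$, hence extends uniquely to a continuous, indeed absolutely continuous, curve $W\in\mathrm{AC}(\Wcal,d_2)$ on all of $[0,1]$ with $[W_t]=\omega_t$ and the isometry property $\delta_2(\omega_t,\omega_s)=\enorm{W_t-W_s}$ for all $s,t$.

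The main obstacle I expect is step (2)–(3): ensuring that the per-level optimal rearrangements can be chosen \emph{consistently} so that a single limiting curve $W_t$ works simultaneously for all pairs $s,t$, not just for consecutive dyadics. Composing rearrangements only directly controls $\enorm{W_{t}^{(n)}-W_{s}^{(n)}}$ for \emph{adjacent} dyadics at level $n$; getting the exact equality $\enorm{W_t-W_s}=\delta_2(\omega_t,\omega_s)$ for \emph{arbitrary} pairs requires combining the triangle inequality in $d_2$ (giving $\le$ along the dyadic path, whose total length is $\sum\delta_2\le\int m$, which must be shown to collapse to exactly $\delta_2(\omega_t,\omega_s)$) with the lower bound $\delta_2(\omega_t,\omega_s)\le\enorm{W_t-W_s}$. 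The collapse works precisely because $\omega$ is absolutely continuous so its length between $s$ and $t$ equals $\int_s^t|\omega'|$, and one arranges the dyadic approximants so their lengths converge down to this; a compactness argument in weak $L^2$ (bounded sets are weakly sequentially compact, and $\enorm{\cdot}$ is weakly lower semicontinuous, while $\Wcal$ is weakly closed being convex and $L^2$-closed) then delivers the limit with the sharp identity. I would also remark that this is essentially the graphon analogue of the lifting of $\mathbb{W}_2$-absolutely-continuous curves to curves of measures/maps, and cite the analogous construction if the paper has set one up.
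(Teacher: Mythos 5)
Your high-level plan---construct near-optimally aligned representatives along a fine time partition and pass to a weak limit via compactness---matches the paper's proof, but the execution differs in two substantive ways and there is a genuine gap in your step (3).

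On the alignment step, the paper does not compose rearrangements. It first proves a gluing result, Lemma~\ref{lemma:MinimizingCoupling_gen}: using the coupling characterization of $\delta_2$ from~\cite[Theorem 8.13]{lovasz2012large} together with the gluing lemma for measures, one produces at each level $n$ a chain $W_{0,n},\dots,W_{n,n}$ of kernels with the \emph{exact} equalities $\enorm{W_{i,n}-W_{i+1,n}}=\delta_2(\omega_{i/n},\omega_{(i+1)/n})$. Your route of picking $\varphi\in\Ical$ and composing along the dyadic chain requires the $2^{-n}$ slack precisely because the infimum over $\Ical$ in Definition~\ref{def:L2_metric} is defined as a limit over $\Ical_k$ and need not be attained; the coupling route attains the optimum, which is why the paper sets up that lemma separately. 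On compactness, the paper interpolates the chain linearly to get $L$-Lipschitz curves $W^{(n)}$ and applies Ascoli's theorem for the weak-$*$ topology on the $L^2$-bounded set $\Wcal$, obtaining a subsequence that converges uniformly in weak-$*$ at \emph{every} $t\in[0,1]$ at once, with the Lipschitz bound carried over by lower semicontinuity of $\enorm{\slot{}}$. Your dyadic-plus-diagonalization variant then still needs an extension step, so it is workable but less direct.

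The gap is in step (3). Lower semicontinuity of the norm plus the telescoping estimate along the dyadic chain yields
\begin{align*}
\enorm{W_t-W_s}\ \le\ \liminf_{n}\sum_i\delta_2\round{\omega_{t_i},\omega_{t_{i+1}}}\ =\ \ell\round{\omega|_{[s,t]}}\ =\ \int_s^t\abs{\omega'}(u)\diff u,
\end{align*}
i.e.\ the \emph{length} of $\omega$ on $[s,t]$, not $\delta_2(\omega_t,\omega_s)$. Length and distance coincide only when $\omega|_{[s,t]}$ is a geodesic; absolute continuity alone does not collapse one to the other, so the ``collapse'' you describe does not go through. It is worth noting that the paper's own write-up of this lemma focuses on verifying that the limiting $W$ is $L$-Lipschitz in $d_2$ and that $[W_t]=\omega_t$; what is actually used downstream (Corollary~\ref{cor:metric_derivative_eq_enorm_Wt'}, the a.e.\ identity $\abs{\omega'}(t)=\enorm{W'_t}$) does follow from the two inequalities $\delta_2(\omega_t,\omega_s)\le\enorm{W_t-W_s}\le\int_s^t\abs{\omega'}$ by differentiating both sides. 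If the pointwise equality resists, that weaker a.e.\ metric-derivative identity is the claim you should aim to prove, and it is all the subsequent arguments require.
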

The proof of Lemma~\ref{lem:representation_graphons} requires a strengthening of~\cite[Theorem 8.13]{lovasz2012large},~\cite[Theorem 6.16]{janson2010graphons} that we state and prove below. Before we begin the proof, we define some notations. Let $(\Omega, \Fcal, \mu)$ be a probability space over some Polish space $\Omega$ equipped with the usual Borel sigma algebra $\Fcal$. For a kernel $W$ on $\Omega$, that is, $W:\Omega\times \Omega\to \rr$ we define the norm $\norm{2,\Omega,\mu}{}$ as
\[
    \norm{2,\Omega,\mu}{W}^2 \coloneqq \int_{\Omega^2}\abs{W(x, y)}^2 \mu(\diff x)\mu(\diff y).
\]

Also, Let $W\in \Wcal$ be a kernel. Let $\varphi\colon(\Omega, \Fcal, \mu)\to ([0, 1],\Bcal([0,1]),\lambda_{[0,1]})$ be a measure preserving map (i.e., $\mu(\varphi^{-1}(B))=\lambda_{[0,1]}(B)$ for all Borel sets $B\subseteq[0,1]$). We can define $W^{\varphi}$ as a kernel on $\Omega^{(2)}$ as 
\begin{equation}\label{eq:generalomega}
    W^{\varphi}(\omega_1, \omega_2) \coloneqq W(\varphi(\omega_1), \varphi(\omega_2)), \quad \text{for $\mu$-a.e.}\quad \omega_1,\omega_2\in\Omega.
\end{equation}

Let $\pi, \rho\colon[0, 1]^2\to [0, 1]$ be the usual coordinate projection maps, that is, $\pi\colon(x, y)\mapsto x$ and $\rho\colon(x, y)\mapsto y$. Using equation~\eqref{eq:generalomega}, we can define $W^{\pi}$ and $W^{\rho}$ as kernels on $\Omega=[0, 1]^2$ for every kernel $W\in \Wcal$. For example, 
\[
    W^{\pi}((x_1, y_1), (x_2, y_2))\coloneqq W(x_1, x_2), \qquad (x_1,y_1),(x_2,y_2)\in[0,1]^2.
\]
It is easy to see that $W^{\pi}$ is symmetric on $[0, 1]^2\times [0, 1]^2$.

In the following discussion, we always equip $[0, 1]$ with the Borel sigma-algebra and the Lebesgue measure, often without explicitly mentioning.
\begin{lemma}\label{lemma:MinimizingCoupling_gen}
    Let $\omega_1, \ldots, \omega_n\in \Graphons$. Then there exist $W_1, \ldots, W_n\in \Wcal$ such that $[W_i]=\omega_i$ and $\enorm{W_i-W_{i+1}}=\delta_2(\omega_i, \omega_{i+1})$ for every $i\in\squarebrack{n-1}$.
\end{lemma}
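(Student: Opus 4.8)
The plan is to construct the kernels $W_1,\dots,W_n$ by induction on the chain, realizing each successive pair of graphons by representatives that simultaneously attain the $\delta_2$-distance. The key tool is the general-probability-space version of the fact that $\delta_2$ can be computed as an infimum over measure preserving maps into $[0,1]$, together with a compactness/weak-isomorphism argument to show the infimum is attained. First I would recall that $\delta_2(\omega_i,\omega_{i+1}) = \inf_{\varphi\in\Ical}\enorm{U_i - U_{i+1}^{\varphi}}$ for any fixed representatives $U_i,U_{i+1}$, and that by~\cite[Theorem 8.13]{lovasz2012large} this infimum over $\Ical$ equals the infimum over all pairs $\varphi_i,\varphi_{i+1}\in\Tcal$; the content we need beyond the cited theorems is that one can realize \emph{all $n$ graphons at once} on a common atomless Polish probability space so that consecutive differences are simultaneously minimized.

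The main step is to prove that for a \emph{single} pair $\omega,\omega'\in\Graphons$ there exist representatives $W,W'$ with $\enorm{W-W'}=\delta_2(\omega,\omega')$. Here I would take minimizing sequences $\varphi_m,\psi_m\in\Tcal$ with $\enorm{U^{\varphi_m} - V^{\psi_m}}\to\delta_2(\omega,\omega')$, view the pair $(U^{\varphi_m},V^{\psi_m})$ as a single $\Wcal$-valued (or $\Wcal\times\Wcal$-valued) object, and pass to a subsequential limit in the appropriate topology. The natural framework is to couple the pushforward measures: realize $U^{\varphi_m}$ and $V^{\psi_m}$ on the product space $[0,1]^2$ via the coordinate projections (exactly the $W^\pi,W^\rho$ construction set up just before the lemma), so that the ``coupling'' is encoded by a measure on $[0,1]^2$ with uniform marginals, i.e.\ a doubly stochastic measure. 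The space of such couplings is weakly compact, so a minimizing coupling exists; one then uses a measure-disintegration/selection argument to replace the optimal coupling by a genuine measure preserving map back to $[0,1]$ (using that $[0,1]$ with Lebesgue measure is a standard atomless space, so any such coupling can be rewritten via maps from a common atomless space), yielding representatives $W,W'$ on $[0,1]^{(2)}$ with $\enorm{W-W'}=\delta_2(\omega,\omega')$. This is essentially the $n=2$ case of~\cite[Theorem 6.16]{janson2010graphons} strengthened from ``infimum'' to ``attained''.

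To get the full chain, I would iterate: having fixed $W_1,\dots,W_i$ with $[W_j]=\omega_j$ and $\enorm{W_j-W_{j+1}}=\delta_2(\omega_j,\omega_{j+1})$ for $j<i$, apply the $n=2$ result to the pair $(\omega_i,\omega_{i+1})$ but \emph{with $W_i$ already frozen}: i.e.\ minimize $\enorm{W_i - V^{\psi}}$ over $\psi\in\Tcal$ (or over the coupling description above) and show this infimum is attained at some $W_{i+1}$ with $[W_{i+1}]=\omega_{i+1}$. Freezing $W_i$ is legitimate because the infimum defining $\delta_2(\omega_i,\omega_{i+1})$ is unchanged if one of the two representatives is held fixed and only the other is allowed to vary over its weak-isomorphism class (this is precisely the second equality in Definition~\ref{def:L2_metric}, $\delta_2([W_0],[W_1]) = \inf_{\varphi\in\Ical}\enorm{W_0 - W_1^\varphi}$, applied with $W_0 = W_i$). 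The existence of the minimizer then follows from the same compactness argument as in the $n=2$ step.

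The hard part will be the attainment of the infimum in the single-pair step: the map $\varphi\mapsto\enorm{U^\varphi - V}$ is not lower semicontinuous in any naive sense on $\Tcal$, and $\Tcal$ is not compact, so one genuinely needs to pass to the richer space of couplings (doubly stochastic measures on $[0,1]^2$, which \emph{is} weakly compact) and then argue that the optimal coupling can be represented by measure preserving maps from a common atomless standard probability space — invoking that on atomless standard spaces every coupling is induced by such maps, and that one may transport everything back to $[0,1]$ without changing $L^2$ distances. Care is also needed that the limiting object is again a bona fide kernel valued in $[-1,1]$ (this is automatic since the constraint $|W|\le 1$ is preserved under the weak-* limit and under measure preserving reparametrization) and that symmetry is preserved.
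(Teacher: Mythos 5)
The single-pair step in your proposal is essentially correct and is exactly what \cite[Theorem 8.13]{lovasz2012large} provides: the $\delta_2$-infimum is attained by a coupling $\mu$ on $[0,1]^2$ with uniform marginals, and composing with a measure isomorphism $\eta\colon[0,1]\to([0,1]^2,\mu)$ yields $\varphi=\pi\circ\eta,\psi=\rho\circ\eta\in\Tcal$ such that $\enorm{U^\varphi-V^\psi}=\delta_2(\omega,\omega')$. This is the paper's starting point as well.

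The gap is in the inductive step where you ``freeze'' $W_i$ and minimize $\enorm{W_i-V}$ over $V\in\omega_{i+1}$. You claim this infimum is attained and equals $\delta_2(\omega_i,\omega_{i+1})$, but attainment fails in general. The optimal coupling $\mu_i$ on $[0,1]^2$ need not be deterministic from the first coordinate (i.e.\ it need not be supported on the graph of a map), and when you realize it by measure preserving maps from a common atomless space, \emph{both} sides get reparametrized: you obtain $W_i^{\varphi}$ and $U_{i+1}^{\psi}$ with $\enorm{W_i^{\varphi}-U_{i+1}^{\psi}}=\delta_2$, where in general $\varphi\neq\id$ and hence $W_i^{\varphi}\neq W_i$. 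Concretely, if you try to produce a representative of $\omega_{i+1}$ while leaving $W_i$ fixed by conditioning --- setting $\tilde U(x,x'):=\int U_{i+1}(y,y')\,\nu_x(\diff y)\nu_{x'}(\diff y')$ where $\mu_i(\diff x,\diff y)=\lambda(\diff x)\nu_x(\diff y)$ --- you get $\enorm{W_i-\tilde U}\le\delta_2(\omega_i,\omega_{i+1})$ by Jensen, but $[\tilde U]\neq\omega_{i+1}$ unless the disintegration consists of point masses. So the induction cannot close: fixing $W_i$ at step $i$ is incompatible with reaching the optimal value at step $i+1$.

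The paper sidesteps this by not iterating at all: it first obtains, via \cite[Theorem 8.13]{lovasz2012large}, an optimal coupling $\mu_i$ on $[0,1]^2$ for each consecutive pair, then applies the gluing lemma \cite[Lemma 7.6]{V03} to produce a single measure $\tilde\mu$ on $[0,1]^n$ whose $(i,i+1)$-marginals are the $\mu_i$, and only then transports back to $[0,1]$ via a measure isomorphism $\eta\colon[0,1]\to([0,1]^n,\tilde\mu)$, setting $W_i=U_i^{\pi_i\circ\eta}$. All $n$ representatives are reparametrized simultaneously, which is precisely what the deterministic-coupling obstruction forces. You should replace your induction with this gluing argument.
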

\begin{proof}
Let $U_i\in\omega_i$ for $i\in [n]$. From~\cite[Theorem 8.13]{lovasz2012large} there exist probability measures $\mu_i$ on $[0, 1]^2$ for $i\in [n-1]$ such that each $\mu_i$ is a coupling of Lebesgue measures satisfying 
\begin{align}\label{eqn:delta2coupling}
    \delta_2(\omega_i, \omega_{i+1}) &= \norm{2,[0, 1]^2, \mu_i}{U_i^{\pi}-U_{i+1}^{\rho}}.
\end{align}

\sloppy Let $\pi_i\colon[0, 1]^n\to [0,1]$ be the usual projection map on the $i$-th coordinate. By the gluing lemma~\cite[Lemma 7.6]{V03}, there exists a measure $\tilde{\mu}$ on $[0, 1]^n$ such that $(\pi_i, \pi_{i+1})_\sharp\tilde{\mu}=\mu_i$. Therefore we have
\begin{equation}
\label{eqn:from_2_to_n}
    \norm{2,[0, 1]^2, \mu_i}{U_i^{\pi}-U_{i+1}^{\rho}} =\norm{2,[0, 1]^n,\tilde{\mu}}{U_i^{\pi_i}-U_{i+1}^{\pi_{i+1}}}.
\end{equation}

Let $\eta\colon[0, 1]\to ([0, 1]^n, \tilde{\mu})$ be a measure preserving bijection and let $\varphi_i\coloneqq\pi_i\circ\eta$. Then $\varphi_i\colon[0, 1]\to [0, 1]$ is measure preserving and therefore we obtain 
\begin{align}
\label{eqn:from_n_to_1}
 \norm{2,[0, 1]^n,\tilde{\mu}}{U_i^{\pi_i}-U_{i+1}^{\pi_{i+1}}}=\norm{2, [0, 1]}{U_i^{\varphi_i}-U_{i+1}^{\varphi_{i+1}}}\;.
\end{align}
Combining equations~\eqref{eqn:delta2coupling},~\eqref{eqn:from_2_to_n} and~\eqref{eqn:from_n_to_1}, and taking $W_i=U_i^{\varphi_i}$ for all $i\in\squarebrack{n}$, yields $\delta_2(\omega_i, \omega_{i+1})=\enorm{W_i-W_{i+1}}$. This completes the proof.
\end{proof}

\begin{proof}[Proof of Lemma~\ref{lem:representation_graphons}]
Following Remark~\ref{rem:reparameterize}, assume (possibly after a reparametrization) that the curve $\omega$ is Lipschitz with Lipschitz constant $L\geq 0$. Let $n\in\Natural$. From Lemma~\ref{lemma:MinimizingCoupling_gen}, there exists $W_{i,n}\in\Wcal$ such that $\squarebrack{W_{i,n}}=\omega_{i/n}$ for all $i\in \Set*{0}\cup\squarebrack{n}$, and
\[
    \enorm{W_{i,n}-W_{i+1,n}}=\delta_2(\omega_{i/n}, \omega_{(i+1)/n}),
\] 
for all $i\in\squarebrack{n-1}$. For each $n\in \mathbb{N}$, let us define the curve $W^{(n)} = \big(W^{(n)}_t\big)_{t\in[0,1]}$ as
\[
    W^{(n)}_t\coloneqq(1-nt+i)W_{i,n}+(nt-i)W_{i+1,n},
\]
when $t\in [i/n, (i+1)/n]$ for some $i\in\squarebrack{n-1}$. Note that $W^{(n)}$ is also Lipschitz with constant $L$ and therefore the family $\setinline{W^{(n)}}_{n\in \mathbb{N}}$ is equicontinuous w.r.t. $d_2$.

Since $\Wcal\subseteq L^2\big([0,1]^{(2)}\big)$ is bounded in $L^2\big([0,1]^{(2)}\big)$, it is weak-$*$ precompact~\cite[Box 1.2]{santambrogio2015optimal}. Since $\setinline{W^{(n)}}_{n\in \mathbb{N}}$ is equicontinuous w.r.t. $d_2$, it will also be equicontinuous w.r.t. the weak-$*$ topology. It follows from Ascoli's theorem~\cite[Theorem 47.1]{munkres2000topology} (possibly after passing to a subsequence and relabeling) that $\big(W^{(n)}\big)_{n\in\Natural}$ converges uniformly in weak-$*$ to some curve $\round{W_t}_{t\in[0,1]}\subseteq L^2\big([0,1]^{(2)}\big)$. It is easy to see that $W_t$ is symmetric and $\abs{W_t}\leq 1$ a.e. on $[0,1]^{(2)}$ and hence $W_t\in \Wcal$ for every $t\in [0, 1]$. 

To conclude our proof, we show that $\round{W_t}_{t\in[0,1]}$ is Lipschitz in $\norm{2}{}$ and that $\delta_2([W_t], \omega_t)=0$ for all $t\in [0, 1]\cap\Rational$ (therefore $[W_t]=\omega_t$ for rational $t$). Since $\omega$ is also Lipschitz, it follows that $\omega_t=[W_t]$ for all $t\in [0, 1]$. 

To see that $\round{W_t}_{t\in[0,1]}$ is Lipschitz, observe that for any $s,t\in[0,1]$,
\begin{align*}
    \left\langle W^{(n)}_t-W^{(n)}_s, W_t-W_s\right\rangle \to \enorm{W_t-W_s}^2.
\end{align*}
Using Cauchy--Schwarz inequality, we obtain
\begin{align*}
    \enorm{W_t-W_s}\le \liminf_{n\to\infty}\enorm{W^{(n)}_t-W^{(n)}_s}\le L\abs{t-s}.
\end{align*}
We now show that $\delta_{2}([W_t], \omega_t)=0$ for all $t\in [0, 1]\cap \Rational$. To this end, fix a $t\in [0, 1]\cap \Rational$ and let $t=p/q$ for some $p, q\in \Natural$. To see this, note that it follows from the proof of~\cite[Lemma 14.16]{lovasz2012large} that $\delta_{2}([W_t], \omega_t)\leq \liminf_{n\to \infty} \delta_{2}([W_{np,nq}], \omega_t)=0$. Note that the hypothesis in~\cite[Lemma 14.16]{lovasz2012large} states that $[W_{np, nq}]\to [W_t]$ in cut-sense, but the proof only requires $W_{np, nq}\to W_t$ in weak-$*$ sense.
\end{proof}

\begin{corollary}\label{cor:metric_derivative_eq_enorm_Wt'}
    If $\omega\in\mathrm{AC}(\Graphons,\delta_2)$, then $\abs{\omega'}(t) = \enorm{W'_t}$ for a.e. $t\in\round{0,1}$, where $\round{W_t}_{t\in\interval{0,1}}\in\mathrm{AC}\round{\Wcal,d_2}$ is obtained as in Lemma~\ref{lem:representation_graphons}.
\end{corollary}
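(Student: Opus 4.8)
The plan is to combine Lemma~\ref{lem:representation_graphons} with the general theory of metric derivatives (Theorem~1.1.2 of~\cite{ambrosio2005gradient}). Given $\omega\in\mathrm{AC}(\Graphons,\delta_2)$, first invoke Lemma~\ref{lem:representation_graphons} to produce $\round{W_t}_{t\in[0,1]}\in\mathrm{AC}(\Wcal,d_2)$ with $\omega_t=[W_t]$ and, crucially, with the exact isometry property
\[
    \delta_2(\omega_t,\omega_s)=\enorm{W_t-W_s}\qquad\text{for all }s,t\in[0,1].
\]
This identity is what makes the corollary essentially immediate: it says the curve $\omega$ in $(\Graphons,\delta_2)$ and the curve $W$ in $(\Wcal,d_2)$ traverse the \emph{same} distances between any pair of parameter values, not merely that $\omega$ is the projection of $W$.

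With the isometry identity in hand, I would simply compute both metric derivatives from the definition. For $\omega$ at a point $t\in(0,1)$ where the metric derivative exists,
\[
    \abs{\omega'}(t)=\lim_{s\to t}\frac{\delta_2(\omega_s,\omega_t)}{\abs{s-t}}=\lim_{s\to t}\frac{\enorm{W_s-W_t}}{\abs{s-t}},
\]
and the right-hand side is by definition $\abs{W'}(t)$, the metric derivative of the curve $W$ in $(\Wcal,d_2)$, provided that limit exists. Since $\omega\in\mathrm{AC}(\Graphons,\delta_2)$ implies (Theorem~1.1.2 of~\cite{ambrosio2005gradient}) that $\abs{\omega'}(t)$ exists for a.e.\ $t\in(0,1)$, the displayed equality shows $\abs{W'}(t)$ exists for a.e.\ $t$ and equals $\abs{\omega'}(t)$. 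One should also note the harmless abuse of notation: for a curve taking values in the Hilbert space $L^2\round{[0,1]^{(2)}}$ that is genuinely differentiable (which an absolutely continuous Hilbert-space curve is, a.e.), the metric derivative $\abs{W'}(t)$ coincides with $\enorm{W'_t}$ where $W'_t$ is the a.e.-defined strong derivative; this is a standard fact about absolutely continuous curves in Hilbert (or reflexive Banach) spaces and is the content writing $\enorm{W'_t}$ rather than $\abs{W'}(t)$ in the statement. I would cite this (e.g.\ via~\cite{ambrosio2005gradient} or~\cite{santambrogio2015optimal}) rather than reprove it.

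There is essentially no obstacle here — the corollary is a direct consequence of the isometry built into Lemma~\ref{lem:representation_graphons} — so the only care needed is bookkeeping. One subtlety worth a sentence in the write-up: Lemma~\ref{lem:representation_graphons} was phrased for the time interval $[0,1]$ after a reparametrization, but metric derivatives transform covariantly under absolutely continuous reparametrizations (and the asserted equality $\abs{\omega'}(t)=\enorm{W'_t}$ is parametrization-dependent on both sides in the same way), so the statement holds for any time horizon; alternatively, Remark~\ref{rem:reparameterize} lets one reduce to the Lipschitz-on-$[0,1]$ case without loss of generality. I would therefore keep the proof to three or four lines: invoke Lemma~\ref{lem:representation_graphons}, write the two displayed limits, cite Theorem~1.1.2 of~\cite{ambrosio2005gradient} for a.e.\ existence, and note the Hilbert-space identification $\abs{W'}(t)=\enorm{W'_t}$.
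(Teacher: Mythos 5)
Your proposal is correct and follows the same path as the paper: invoke the pointwise isometry $\delta_2(\omega_t,\omega_s)=\enorm{W_t-W_s}$ from Lemma~\ref{lem:representation_graphons}, then identify $\abs{\omega'}(t)$ with the metric derivative of the Hilbert-space curve $(W_t)$, and finally identify that with $\enorm{W'_t}$ via a.e.\ strong differentiability of absolutely continuous curves in $L^2$. The paper merely spells out the last identification explicitly (Radon--Nikod\'ym property plus Lebesgue differentiation) rather than citing it as a black box; the content is the same.
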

\begin{proof}
    Let $\omega$ and $(W_t)_{t\in[0,1]}$ be as above. Recall that $(W_t)_{t\in[0,1]}$ is an absolutely continuous curve in $(\Wcal, d_2)$. Since every absolutely continuous curve in a Hilbert space is differentiable a.e. (Radon--Nikod\'ym property)~\cite[page 30, Theorem 5]{HUFF19771}, there exists a family $W'_t \in L^2\big(\interval{0,1}^{(2)}\big)$, for a.e. $t\in\interval{0,1}$, such that $W_t-W_0=\int_0^t W'_s \diff s$ holds pointwise a.e. on $[0,1]^{(2)}$. It follows from Lebesgue differentiation theorem~\cite[Theorem 6.32]{hunter2014notes} that $\norm{2}{\frac{W_t-W_s}{t-s}-W'_t}\to 0$ as $s\to t$. We know from Lemma~\ref{lem:representation_graphons} that $\delta_2(\omega_t, \omega_s)= \norm{2}{W_t-W_s}$. Thus, it follows that $\abs{\omega'}(t)=\lim_{s\to t}\frac{\delta_2(\omega_t, \omega_s)}{\abs{t-s}}=\norm{2}{W'_t}$ for a.e. $t\in(0,1)$.
\end{proof}

\begin{lemma}\label{lem:cut_metric_equals_least_action}
The invariant $L^2$ metric between two graphons $\squarebrack{U},\squarebrack{V}\in\Graphons$ satisfies
\begin{align}
    \metric{\delta_2}{\squarebrack{U}}{\squarebrack{V}} &= \min \int_r^s \norm{2}{W_t'} \diff t\ ,
\end{align}
for any $0\leq r<s \leq 1$, where the minimum is taken over $(W_t)_{t\in\interval{r,s}}\in\mathrm{AC}\round{\Wcal,d_2}$ with domain $\interval{r,s}$ such that $W_r \in \squarebrack{U}$ and $W_s\in \squarebrack{V}$.
\end{lemma}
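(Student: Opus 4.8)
The plan is to prove the two inequalities between $\delta_2([U],[V])$ and the infimized action integral, and then upgrade ``inf'' to ``min'' via a compactness/construction argument. I would first reduce to the case $[r,s]=[0,1]$ by an affine time-reparametrization (which scales both sides identically, so there is no loss of generality), and work throughout with a fixed representative situation.

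\emph{Lower bound (action $\geq \delta_2$).} Let $(W_t)_{t\in[0,1]}\in\mathrm{AC}(\Wcal,d_2)$ with $W_0\in[U]$, $W_1\in[V]$. Then $\omega_t\coloneqq[W_t]$ is in $\mathrm{AC}(\Graphons,\delta_2)$ (this is the easy direction noted just before Lemma~\ref{lem:representation_graphons}), and for any $0\le a<b\le 1$ one has $\delta_2(\omega_a,\omega_b)=\delta_2([W_a],[W_b])\le \enorm{W_a-W_b}\le \int_a^b\enorm{W_t'}\diff t$, where the last step uses that $(W_t)$ is absolutely continuous in a Hilbert space so $W_b-W_a=\int_a^b W_t'\diff t$ (Radon--Nikod\'ym property, as invoked in the proof of Corollary~\ref{cor:metric_derivative_eq_enorm_Wt'}) together with the triangle inequality for the Bochner integral. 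Taking $a=0$, $b=1$ gives $\delta_2([U],[V])\le\int_0^1\enorm{W_t'}\diff t$, so the action integral is bounded below by $\delta_2([U],[V])$ for every competitor.

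\emph{Upper bound and attainment (min $\le\delta_2$).} Here I would use Lemma~\ref{lemma:MinimizingCoupling_gen} with $n=2$: there exist $W_0\in[U]$ and $W_1\in[V]$ with $\enorm{W_0-W_1}=\delta_2([U],[V])$. Define the straight-line curve $W_t\coloneqq(1-t)W_0+tW_1$ for $t\in[0,1]$. Each $W_t$ is symmetric and takes values in $[-1,1]$ (convex combination), hence $W_t\in\Wcal$; the curve is Lipschitz in $d_2$ with $W_t'=W_1-W_0$ a.e., so $\int_0^1\enorm{W_t'}\diff t=\enorm{W_1-W_0}=\delta_2([U],[V])$. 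This exhibits a competitor achieving the lower bound, so the infimum is attained and equals $\delta_2([U],[V])$. Combining the two parts (and undoing the reparametrization to general $[r,s]$) completes the proof.

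\emph{Expected main obstacle.} There is essentially no deep obstacle: the content is entirely carried by the already-established minimizing-coupling lemma (Lemma~\ref{lemma:MinimizingCoupling_gen}), which lets us pick optimal endpoint representatives, and by the Hilbert-space structure of $L^2([0,1]^{(2)})$, which makes the linear interpolant both admissible (values stay in $[-1,1]$ by convexity) and optimal. The only minor care needed is bookkeeping: confirming that the affine reparametrization preserves membership in $\mathrm{AC}(\Wcal,d_2)$ and rescales $\int\enorm{W_t'}\diff t$ correctly, and confirming the Bochner fundamental-theorem-of-calculus step used in the lower bound — both are standard and already used elsewhere in the paper. So I would present this as a short proof, emphasizing the linear interpolation construction for attainment and the triangle inequality $\delta_2([W_a],[W_b])\le\enorm{W_a-W_b}$ for the lower bound.
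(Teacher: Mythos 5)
Your proof is correct and takes essentially the same route as the paper: the lower bound via the Bochner fundamental theorem and the triangle inequality for the integral (which the paper phrases as Jensen's inequality), and attainment via the linear interpolant between optimally-coupled endpoint representatives. The only cosmetic differences are that you reparametrize to $[0,1]$ first and cite Lemma~\ref{lemma:MinimizingCoupling_gen} explicitly for the optimal coupling, whereas the paper works on $[r,s]$ directly and invokes Definition~\ref{def:L2_metric} (whose attainment rests on the same underlying reference).
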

\begin{proof}
Let $\round{W_t}_{t\in\interval{r,s}}\subseteq\mathrm{AC}\round{\Wcal,d_2}$ be such that $W_r\in\squarebrack{U}$ and $W_s\in\squarebrack{V}$. Applying Jensen's inequality, we obtain
\begin{align}
    \int_r^s \enorm{W_t'} \diff t &\geq \enorm{\int_r^s W'_t \diff t} = \enorm{W_s - W_r} \ge  \metric{\delta_2}{\squarebrack{U}}{\squarebrack{V}}\ .\label{eq:lb_integral_W'}
\end{align}
\sloppy Following Definition~\ref{def:L2_metric}, there exists $\varphi_1, \varphi_2\in\Tcal$ such that
\begin{align}
    \metric{\delta_2}{\squarebrack{U}}{\squarebrack{V}} &= \enorm{U^{\varphi_1} - V^{\varphi_2}}.\label{eq:L2_UV_eps}
\end{align}
Therefore, we can define an curve $\round{W_{t}}_{t\in\interval{r,s}}\in\mathrm{AC}\round{\Wcal,d_2}$ as $W_{r} \coloneqq U^{\varphi_1}$, $W_{s} \coloneqq V^{\varphi_2}$ and $W_{t} \coloneqq \round{(s-t)W_{r} + (t-r)W_{s}}/(s-r)$ for $t\in(r,s)$.  Since for any $r\leq a < b \leq s$,
\begin{align}
    \enorm{W_{b}-W_{a}} &= \frac{\enorm{W_{s}-W_{r}}}{s-r} \cdot (b-a) = \frac{\enorm{U^{\varphi_1}-V^{\varphi_2}}}{s-r} \cdot (b-a),
\end{align}
therefore $\round{W_{t}}_{t\in\interval{r,s}}\in \mathrm{AC}\round{\Wcal,d_2}$ and $W'_{t} = \round{U^{\varphi_1}-V^{\varphi_2}}/(s-r)$ exists for all $t\in\round{r,s}$. With this choice of $\round{W_{t}}_{t\in\interval{r,s}}\in\mathrm{AC}\round{\Wcal,d_2}$, from equation~\eqref{eq:L2_UV_eps} we get
\begin{align}
    \int_r^s\enorm{W'_{t}}\diff t &= \enorm{U^{\varphi_1} - V^{\varphi_2}} = \metric{\delta_2}{\squarebrack{U}}{\squarebrack{V}}.\label{eq:int_eq_metric}
\end{align}
Combining equation~\eqref{eq:lb_integral_W'} and equation~\eqref{eq:int_eq_metric} completes the proof.
\end{proof}

As a consequence of above discussion,  we obtain that $(\Graphons, \delta_2)$ is a geodesic space. To the best of our knowledge it has not been recorded in the earlier literature.

\begin{proposition}\label{prop:graphon_geodesic_space}
    The space $(\Graphons,\delta_2)$ is a geodesic metric space.
\end{proposition}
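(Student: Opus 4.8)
The plan is to deduce Proposition~\ref{prop:graphon_geodesic_space} directly from Lemma~\ref{lem:cut_metric_equals_least_action} together with the definition of a geodesic metric space (Definition~\ref{def:geodesic_metric_space}). The key observation is that Lemma~\ref{lem:cut_metric_equals_least_action} already produces an explicit curve in $(\Wcal, d_2)$ — namely the straight-line interpolation $W_t = \round{(s-t)U^{\varphi_1} + (t-r)V^{\varphi_2}}/(s-r)$ between optimally aligned representatives — whose $d_2$-length realizes $\delta_2(\squarebrack{U},\squarebrack{V})$. Pushing this curve down to $\Graphons$ via $t\mapsto \squarebrack{W_t}$ gives a candidate geodesic, so the bulk of the work is bookkeeping: checking that the projected curve is absolutely continuous in $\delta_2$, that its length equals $\delta_2(\squarebrack{U},\squarebrack{V})$, and that no competing curve can be shorter.

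First I would fix $\squarebrack{U},\squarebrack{V}\in\Graphons$ and, specializing Lemma~\ref{lem:cut_metric_equals_least_action} to $\interval{r,s} = \interval{0,1}$, take the curve $\round{W_t}_{t\in\interval{0,1}}\in\mathrm{AC}(\Wcal,d_2)$ constructed in its proof, so that $\enorm{W_b - W_a} = \enorm{U^{\varphi_1} - V^{\varphi_2}}\cdot(b-a) = \delta_2(\squarebrack{U},\squarebrack{V})\cdot(b-a)$ for all $0\le a\le b\le 1$. Define $\omega_t \coloneqq \squarebrack{W_t}$. Since $\delta_2(\omega_a,\omega_b)\le \enorm{W_a - W_b}$ by the definition of $\delta_2$ (Definition~\ref{def:L2_metric}), the curve $\omega$ is Lipschitz, hence absolutely continuous, in $\delta_2$, so $\omega\in\mathrm{AC}(\Graphons,\delta_2)$. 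Next I would bound its length: $\ell(\omega) = \sup\sum_k \delta_2(\omega_{t_k},\omega_{t_{k+1}}) \le \sup\sum_k \enorm{W_{t_k} - W_{t_{k+1}}} = \enorm{W_0 - W_1} = \delta_2(\squarebrack{U},\squarebrack{V})$, using that the straight-line interpolation has total $d_2$-variation equal to $\enorm{W_0-W_1}$. On the other hand, the general lower bound recorded after Definition~\ref{def:length} gives $\ell(\omega)\ge \delta_2(\omega_0,\omega_1) = \delta_2(\squarebrack{U},\squarebrack{V})$, since $\omega_0 = \squarebrack{U^{\varphi_1}} = \squarebrack{U}$ and $\omega_1 = \squarebrack{V^{\varphi_2}} = \squarebrack{V}$. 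Hence $\ell(\omega) = \delta_2(\squarebrack{U},\squarebrack{V})$.

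Finally, to verify the minimality clause in Definition~\ref{def:geodesic_metric_space}, I would show that every $\eta\in\mathrm{AC}(\Graphons,\delta_2)$ with $\eta_0 = \squarebrack{U}$, $\eta_1 = \squarebrack{V}$ satisfies $\ell(\eta)\ge \delta_2(\squarebrack{U},\squarebrack{V})$; but this is again exactly the elementary inequality $\ell(\eta)\ge \delta_2(\eta_0,\eta_1)$ noted after Definition~\ref{def:length}. Combining, the infimum $\min\Set{\ell(\eta)\given \eta\in\mathrm{AC}(\Graphons,\delta_2),\ \eta_0=\squarebrack{U},\ \eta_1=\squarebrack{V}}$ equals $\delta_2(\squarebrack{U},\squarebrack{V})$ and is attained by $\omega$, which is the definition of a geodesic metric space.

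I do not expect a serious obstacle here; the real content was already extracted into Lemma~\ref{lem:cut_metric_equals_least_action} and Lemma~\ref{lemma:MinimizingCoupling_gen}. The only points requiring a little care are: (i) confirming that the linear interpolant stays inside $\Wcal$ (symmetry is preserved, and the values stay in $\interval{-1,1}$ by convexity of that interval), which was already implicitly used in Lemma~\ref{lem:cut_metric_equals_least_action}; and (ii) making sure the length of the projected curve is computed in $\delta_2$ and not merely bounded — here the sandwich between the $d_2$-length upper bound and the $\delta_2(\omega_0,\omega_1)$ lower bound closes the gap cleanly. Alternatively, one could phrase the whole argument as: for arbitrary $0\le r<s\le 1$, Lemma~\ref{lem:cut_metric_equals_least_action} exhibits a curve whose action equals $\delta_2$, and standard reparametrization (Remark~\ref{rem:reparameterize}) turns the action identity into a constant-speed geodesic; I would mention this as the one-line restatement but keep the length-based argument as the primary proof.
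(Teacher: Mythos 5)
Your proof is correct and follows essentially the same route as the paper's: both obtain optimally aligned representatives $U, V\in\Wcal$ with $\enorm{U-V}=\delta_2(\squarebrack{U},\squarebrack{V})$, linearly interpolate in $\Wcal$, project to $\Graphons$, and sandwich the length between the elementary lower bound $\ell(\omega)\ge\delta_2(\omega_0,\omega_1)$ and the upper bound coming from the $d_2$-Lipschitz estimate of the interpolant. The only cosmetic difference is that you route the construction through Lemma~\ref{lem:cut_metric_equals_least_action}, while the paper re-does the same linear interpolation inline; the substance is identical.
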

\begin{proof}
    Recall that (see the remark after the Definition~\ref{def:length}) for any $\omega = \round{\omega_t}_{t\in[0,1]}\in\mathrm{AC}(\Graphons,\delta_2)$ such that $\omega_0=[U]$, and $\omega_1=[V]$, we have
    \begin{align}
       \ell(\omega) \ge \metric{\delta_2}{\omega_0}{\omega_1}= \metric{\delta_2}{[U]}{[V]}. \label{eq:min_geq_cutmetric}
    \end{align}
    Given $\squarebrack{U}, \squarebrack{V}\in\Graphons$, it suffices to construct a curve $\omega^* = \round{\omega^*_t}_{t\in[0,1]}\in\mathrm{AC}(\Graphons,\delta_2)$ such that $\omega^*_0=\squarebrack{U}$, $\omega^*_1=\squarebrack{V}$, and $\ell(\omega^*) \leq \metric{\delta_2}{[U]}{[V]}$. Without any loss of generality, we can choose $U,V\in\Wcal$ such that $\metric{\delta_2}{\squarebrack{U}}{\squarebrack{V}}=\nrm{2}{U-V}$. Define $\omega^*$ as $\omega^*_t \coloneqq \squarebrack{W_{t}}$ where $W_{t} \coloneqq (1-t)U + tV$ for all $t\in [0, 1]$. The curve $\omega^*\in\mathrm{AC}(\Graphons,\delta_2)$ since
    \begin{align}
        \delta_2([W_s], [W_r])\le \enorm{W_{s}-W_{r}} &=  \enorm{U-V}\cdot(s-r),
    \end{align}
    for all $0\leq r<s\leq 1$. Now observe that
    \begin{align}
        \ell(\omega^*) &= \sup\Set*{\sum_{k=0}^{n-1}\metric{\delta_2}{\squarebrack{W_{t_k}}}{\squarebrack{W_{t_{k+1}}}} \given n\in\Natural, 0=t_0<t_1\cdots < t_n = 1}\nonumber\\
        &\le \sup\Set*{\sum_{k=0}^{n-1}\enorm{U-V}(t_{k+1}-t_k)\diff t \given n\in\Natural, 0=t_0<t_1\cdots < t_n = 1}\nonumber\\
        &= \enorm{U-V}= \metric{\delta_2}{\omega^*_0}{\omega^*_1}.\label{eq:min_leq_cutmetric}
    \end{align}
    This completes the proof.
\end{proof}

Since $(\Graphons, \delta_2)$ is a geodesic metric space, the usual notions of geodesic convexity and semiconvexity makes sense in $(\Graphons, \delta_2)$.  We the subsequent sections we will need a notion of generalized geodesics( defined below) and show that generalized geodesics exist.

\begin{definition}[Generalized geodesics on $(\Graphons,\delta_2)$]\label{def:gen_geodesics}
    Let $\squarebrack{W_0},\squarebrack{W_1}\in\Graphons$. For every $\squarebrack{W}\in \Graphons$, one can construct an absolutely continuous curve $\vartheta$ (depending on $\squarebrack{W}$) as follows. From Lemma~\ref{lemma:MinimizingCoupling_gen}, we obtain $\varphi,\varphi_0,\varphi_1\in\Tcal$ such that 
    \begin{align}
        \delta_2\round{\squarebrack{W},\squarebrack{W_0}} = \enorm{W^\varphi - W_0^{\varphi_0}}\, ,\quad\text{and}\quad\delta_2\round{\squarebrack{W},\squarebrack{W_1}} = \enorm{W^\varphi - W_1^{\varphi_1}}.
    \end{align}
    Define the curve $\vartheta\coloneqq \round{\squarebrack{W_t}}_{t\in\interval{0,1}}$, where $W_t\coloneqq (1-t)W_0^{\varphi_0} + tW_1^{\varphi_1}$ for every $t\in\interval{0,1}$. This curve $\vartheta$ is called a {\em generalized geodesic} (\emph{with base} $\squarebrack{W}$) between the graphons $\squarebrack{W_0}$ and $\squarebrack{W_1}$ with respect to $\delta_2$. Often, when the base is clear from the context, we simply refer it as a generalized geodesic. From the construction, we can see that any geodesic between $\squarebrack{W_0},\squarebrack{W_1}\in\Graphons$ is also a generalized geodesic (with suitably chosen base) between them.
\end{definition}

Finally, we prove the following lemma that will be useful in the proof of Theorem~\ref{thm:GF_convergence}.

\begin{lemma}\label{lem:gen_geodesic_cvx_delta_2}
    If $\squarebrack{W},\squarebrack{W_0},\squarebrack{W_1}\in\Graphons$, then there exists $\vartheta = \round{\vartheta_t}_{t\in[0,1]}\in \mathrm{AC}(\Graphons,\delta_2)$ such that $\vartheta_0 = \squarebrack{W_0}$, $\vartheta_1 = \squarebrack{W_1}$, and $\delta_2^2\round{\squarebrack{W},\slot{}}/2$ is $1$-semiconvex over $\vartheta$ w.r.t. $\delta_2$.
\end{lemma}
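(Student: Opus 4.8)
The plan is to realize $\vartheta$ as a generalized geodesic with base $\squarebrack{W}$ in the sense of Definition~\ref{def:gen_geodesics}, and then to reduce the assertion to the elementary Hilbert-space identity
\[
    \enorm{(1-t)a + tb}^2 = (1-t)\enorm{a}^2 + t\enorm{b}^2 - t(1-t)\enorm{a-b}^2 , \qquad a,b\in L^2\big(\interval{0,1}^{(2)}\big),\ t\in\interval{0,1}.
\]
First I would apply Lemma~\ref{lemma:MinimizingCoupling_gen} to the \emph{ordered triple} $\squarebrack{W_0},\squarebrack{W},\squarebrack{W_1}$ to obtain representatives $\widetilde W_0\in\squarebrack{W_0}$, $\widetilde W\in\squarebrack{W}$, $\widetilde W_1\in\squarebrack{W_1}$ in $\Wcal$ with
\[
    \enorm{\widetilde W - \widetilde W_0} = \delta_2\round{\squarebrack{W},\squarebrack{W_0}} , \qquad \enorm{\widetilde W - \widetilde W_1} = \delta_2\round{\squarebrack{W},\squarebrack{W_1}} .
\]
The point of invoking the chain version of the minimizing-coupling lemma, rather than two separate optimal couplings, is that both equalities are attained from \emph{one and the same} representative $\widetilde W$ of $\squarebrack{W}$; this is exactly the data used to build a generalized geodesic with base $\squarebrack{W}$.

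Next I would set $\vartheta = \round{\vartheta_t}_{t\in[0,1]}$ with $\vartheta_t \coloneqq \squarebrack{W_t}$, $W_t \coloneqq (1-t)\widetilde W_0 + t\widetilde W_1$. Each $W_t$ is symmetric, Borel, and $\interval{-1,1}$-valued, hence $W_t\in\Wcal$; and the bound $\delta_2\round{\vartheta_s,\vartheta_r}\le \enorm{W_s-W_r} = \abs{s-r}\,\enorm{\widetilde W_0-\widetilde W_1}$ shows $\vartheta\in\mathrm{AC}(\Graphons,\delta_2)$ with $\vartheta_0=\squarebrack{W_0}$ and $\vartheta_1=\squarebrack{W_1}$. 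For the semiconvexity estimate I would use $\widetilde W$ as a representative of $\squarebrack{W}$ and $W_t$ of $\vartheta_t$ in Definition~\ref{def:L2_metric} to get
\[
    \delta_2^2\round{\squarebrack{W},\vartheta_t} \le \enorm{\widetilde W - W_t}^2 = \enorm{(1-t)\big(\widetilde W - \widetilde W_0\big) + t\big(\widetilde W - \widetilde W_1\big)}^2 ,
\]
and then expand the right-hand side by the identity above with $a = \widetilde W - \widetilde W_0$, $b = \widetilde W - \widetilde W_1$, $a-b = \widetilde W_1 - \widetilde W_0$, which by the previous displayed equalities yields
\[
    \delta_2^2\round{\squarebrack{W},\vartheta_t} \le (1-t)\,\delta_2^2\round{\squarebrack{W},\squarebrack{W_0}} + t\,\delta_2^2\round{\squarebrack{W},\squarebrack{W_1}} - t(1-t)\,\enorm{\widetilde W_0 - \widetilde W_1}^2 .
\]
Finally, since $\enorm{\widetilde W_0 - \widetilde W_1} \ge \delta_2\round{\squarebrack{W_0},\squarebrack{W_1}} = \delta_2\round{\vartheta_0,\vartheta_1}$ and the coefficient $-t(1-t)$ is nonpositive, the last term is at most $-t(1-t)\,\delta_2^2\round{\vartheta_0,\vartheta_1}$; dividing by $2$ gives precisely inequality~\eqref{eq:lambda_cvx_along_curve} of Definition~\ref{def:lambda_cvx_along_curve} with $\lambda = 1$ for the function $F = \tfrac12\,\delta_2^2\round{\squarebrack{W},\slot{}}$ along $\vartheta$.

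I do not expect a genuine obstacle here: every tool is already in place. The only two points that warrant a sentence of care are (i) the \emph{simultaneous} optimality of the two couplings, which is what forces the use of the chain form of Lemma~\ref{lemma:MinimizingCoupling_gen} instead of two independent applications of Definition~\ref{def:L2_metric}; and (ii) the replacement of $\enorm{\widetilde W_0-\widetilde W_1}^2$ by the possibly strictly smaller quantity $\delta_2^2\round{\squarebrack{W_0},\squarebrack{W_1}}$, which is legitimate only because it enters with a nonpositive coefficient. Everything else is the one-line parallelogram-type computation recorded in the first display.
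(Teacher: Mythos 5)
Your proof is correct and takes essentially the same route as the paper: it invokes Lemma~\ref{lemma:MinimizingCoupling_gen} on the triple to obtain a common optimal representative of $\squarebrack{W}$ simultaneously coupled to representatives of $\squarebrack{W_0}$ and $\squarebrack{W_1}$, defines the generalized geodesic as the linear interpolation of those two representatives, and then applies the Hilbert-space parallelogram identity followed by $\enorm{\widetilde W_0-\widetilde W_1}\ge\delta_2(\squarebrack{W_0},\squarebrack{W_1})$ (used with the correct sign). The two points you flag as needing care are exactly the ones the paper's argument also relies on.
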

\begin{proof}
    From Lemma~\ref{lemma:MinimizingCoupling_gen} we obtain $\varphi,\varphi_0,\varphi_1\in\Tcal$ such that
    \begin{align}
        \delta_2\round{\squarebrack{W},\squarebrack{W_0}} = \enorm{W^\varphi - W_0^{\varphi_0}}, \quad\text{and}\quad \delta_2\round{\squarebrack{W},\squarebrack{W_1}} = \enorm{W^\varphi - W_1^{\varphi_1}}.\label{eq:chain_eq}
    \end{align}
    Defining $W_t \coloneqq (1-t)W_0^{\varphi_0} + t W_1^{\varphi_1}$ and $\vartheta_t \coloneqq [W_t]$ for $t\in[0,1]$, we get that that
    \begin{align*}
        \delta_2^2\round{\squarebrack{W},\squarebrack{W_t}} &\leq \enorm{W^\varphi - (1-t)W_0^{\varphi_0} - tW_1^{ \varphi_1}}^2\nonumber\\
        &= (1-t)\enorm{W^\varphi - W_0^{\varphi_0}}^2 + t\enorm{W^\varphi - W_1^{\varphi_1}}^2 - t(1-t)\enorm{W_0^{\varphi_0} - W_1^{\varphi_1}}^2\nonumber\\
        &= (1-t)\delta_2^2\round{\squarebrack{W},\squarebrack{W_0}} + t\delta_2^2\round{\squarebrack{W},\squarebrack{W_1}} - t(1-t)\enorm{W_0^{\varphi_0} - W_1^{\varphi_1}}^2\nonumber\\
        &\leq (1-t)\delta_2^2\round{\squarebrack{W},\squarebrack{W_0}} + t\delta_2^2\round{\squarebrack{W},\squarebrack{W_1}} - t(1-t)\delta_2^2\round{\squarebrack{W_0},\squarebrack{W_1}}.
    \end{align*}
    Therefore, $\delta_2^2\round{\squarebrack{W},\slot{}}/2$ is $1$-semiconvex along $\vartheta$ w.r.t. $\delta_2$.\qedhere
\end{proof}

\section{Gradient Flows on Graphons}\label{sec:grad_flows}
The goal of this section is to prove Theorem~\ref{thm:GF_convergence}. Section~\ref{sec:implicit_euler} is mostly for the completeness. In Section~\ref{sec:implicit_euler} we define an implicit Euler scheme following~\cite{ambrosio2005gradient} that allows one to prove the existence of gradient flow in Theorem~\ref{thm:existence}. However, the conditions in Theorem~\ref{thm:existence} are hard to verify. We do not use this existence result. Later we prove an alternate existence theorem for gradient flow. However, the main highlight of Section~\ref{sec:implicit_euler} is Proposition~\ref{prop:gamma_convergence} where we prove a $\Gamma$-convergence result for Euler iterates on finite dimensional matrices (or step-kernels). This result is crucial in the proof of our main theorem and we believe that it would be of independent interest as well.

In Section~\ref{sec:frechet_derivatives} we introduce a notion of `Fr\'echet-like derivative' which in turn allows us to prove the existence of gradient flow for function $F$ that have Fr\'echet-like derivative. We do verify this condition for a large class of functions in Section~\ref{sec:examples_GF}. As pointed out in the introduction, the existence of a Fr\'echet-like derivative not only allows us to prove the existence of gradient flow, but it also allows a kernel representation of the gradient flow (see Theorem~\ref{thm:Existence_with_FrehetDerivative}) which is extremely useful in practice. This section is the most technical part of the paper and has many results of independent interest. 

 In Section~\ref{sec:convergence} we give the proof of main result Theorem~\ref{thm:GF_convergence}. This section can be read directly after Section~\ref{sec:prelims} and Proposition~\ref{prop:gamma_convergence}. 

Finally, Section~\ref{sec:continuity_eq} complements the discussion by proving that the gradient flows in $(\Graphons, \delta_2)$ can also be described by a family of continuity equations. This plays an important role in~\cite{HOPST22}.

\subsection{Implicit Euler method, Generalized Minimizing Movements}\label{sec:implicit_euler}
Here we introduce an implicit Euler scheme and use to show the existence of gradient flow. We begin with the setup and definitions. Given $F\colon\Graphons\to \mathbb{R}\cup\{\infty\}$, a step size $\tau > 0$ and $\squarebrack{U}\in\Graphons$, define a functional $\Phi_F\round{\tau,\squarebrack{U};\slot{}}\colon \Graphons \to \rr\cup\Set*{\infty}$, called \emph{penalized functional}, given by
\begin{align}
    \Phi_F(\tau,\squarebrack{U};\squarebrack{V}) &\coloneqq F\round{\squarebrack{V}} + \inv{2\tau}\delta_2^2\round{\squarebrack{V},\squarebrack{U}}\, ,\label{eq:Phi_def}
\end{align}
and a set-valued {\em resolvent operator} $J_\tau$ on $\Graphons$ as
\begin{align}
    J_\tau\round{\squarebrack{U}} \coloneqq \argmin_{\Graphons}\Phi_F\round{\tau,\squarebrack{U};\slot{}},\quad \text{for $\squarebrack{U}\in\Graphons$.} \label{eq:resolvent_op}
\end{align}

For a sequence $\tauvec \coloneqq \round{\tau_n}_{n\in\Natural}$ of positive time steps with $\abs{\tauvec} \coloneqq \sup_{n\in\Natural}\tau_n < \infty$, we can associate a partition of the time interval $\round{0,\infty}$ as
\[
    P_\tauvec \coloneqq \Set*{I^n_\tauvec \coloneqq (t^{n-1}_\tauvec,t^n_\tauvec]}_{n\in\Natural}, \qquad \tau_n = t^n_\tauvec- t^{n-1}_\tauvec,
\]
if $t^0_\tauvec =0$ and $\lim_{n\to\infty}t^n_\tauvec = \infty$. Given such a sequence $\tauvec$ and $\squarebrack{U_{\tauvec, 0}}\in \Graphons$, we can obtain a sequence $\round{[U_{\tauvec, n}]}_{n\in\Natural}$ by iteratively solving for $[U_{\tauvec, n}]$, by setting
\begin{align}\label{eqn:recursiveSol}
    [U_{\tauvec, n}]\in J_{\tau_n}\round{\squarebrack{U_{\tauvec, n-1}}},
\end{align}
provided $J_{\tau_n}\round{\squarebrack{U_{\tauvec, n-1}}}$ is non-empty for every $n\in\Natural$. Note that the iterates $([U_{\tauvec, n}])_{n\in\Natural}$ tries to minimize the function $F$ at each step, but it is penalized against taking big jumps. In practice, therefore, it makes sense to treat the curve obtained by joining these iterates as a proxy for gradient flow. We need the following definition to makes this idea precise.

\begin{definition}[Discrete solution]\label{def:discrete_sol}
    Given the sequence $\round{\squarebrack{U_{\tauvec,n}}}_{n\in\Natural}$ as above in equation~\eqref{eqn:recursiveSol}, interpolate the discrete points by a piece-wise constant left-continuous function $\overline{\squarebrack{U_\tauvec}}\colon[0,\infty)\to\Graphons$, defined as
    \begin{align}
        \overline{\squarebrack{U_\tauvec}}(0) \coloneqq \squarebrack{U_{\tauvec,0}}\, ,\qquad \overline{\squarebrack{U_\tauvec}}(t) \coloneqq \squarebrack{U_{\tauvec,n}}\, ,\quad t\in (t_{n-1},t_n].
    \end{align}
    We call $\overline{\squarebrack{U_\tauvec}}$ to be a {\em discrete solution} corresponding to the partition $P_\tauvec$.
\end{definition}

Discrete solutions are simply a way of creating a curve from the iterates of resolvent operator. One should expect that as one take $\abs{\tauvec_n}\to 0$, the discrete solutions yield a curve that is often a good candidate for gradient flow (a.k.a. curves of maximal slope) in an arbitrary metric space setting. Such curves are called generalized minimizing movements that we define below.

\begin{definition}[Generalized minimizing movements]\label{def:GMM}
    For a function $F$, its corresponding functional $\Phi_F$ as defined in equation~\eqref{eq:Phi_def}, and an initial datum $\squarebrack{U_0}\in\Graphons$, we say that a curve  $\omega = \round{\omega_t}_{t\in\rr_+}$ in $\Graphons$ is a {\em generalized minimizing movement} (GMM) for $\Phi_F$ starting from $\squarebrack{U_0}\in\Graphons$ if there exists a sequence of sequences $\round{\tauvec_k}_{k\in \Natural}$ with $\lim_{k\to\infty}\abs{\tauvec_k} = 0$ and a corresponding sequence of discrete solutions $(\overline{\squarebrack{U_{\tauvec_k}}})_{k\in\Natural}$ defined as in Definition~\ref{def:discrete_sol} such that for all $t\in\rr_+$,
    \begin{align}
    \begin{split}
        \lim_{k\to\infty} F\round{\squarebrack{U_{\tauvec_k,0}}} &= F\round{\squarebrack{U_0}}, \quad \limsup_{k\to\infty} \delta_2\round{\squarebrack{U_{\tauvec_k,0}},\squarebrack{U_0}} < \infty,\\   &\toplim{\delta_\cut}_{k\to\infty}\overline{\squarebrack{U_{\tauvec_k}}}(t) = \omega_t.
    \end{split}
    \label{eq:GMM_conditions}
    \end{align}
\end{definition}
There is a related definition of \textit{minimizing movement} (MM) curves that can be found in~\cite[Definition 2.0.6]{ambrosio2005gradient} where the conditions in equation~\eqref{eq:GMM_conditions} need to hold for all sequences of partitions with vanishing norm.
The set of all minimizing movements and generalized minimizing movements on the metric space $(\Graphons,\delta_2)$ with respect to the metric $\delta_\cut$ starting from $\squarebrack{U_0}\in\effdom(F)$ are denoted by $\mathrm{MM}_{\delta_2,\delta_\cut}\round{\Phi_F,\squarebrack{U_0}}$ and $\mathrm{GMM}_{\delta_2,\delta_\cut}\round{\Phi_F,\squarebrack{U_0}}$ respectively. From their definitions it can be verified that the set of minimizing movements is contained in the set of generalized minimizing movements. See~\cite[Definition 2.0.6]{ambrosio2005gradient} for the precise difference between them.
Since $(\Graphons,\delta_2)$ is a bounded metric space, the second conditions in equation~\eqref{eq:GMM_conditions} and~\cite[equation 2.0.10]{ambrosio2005gradient} are trivially satisfied. 

\begin{lemma}\label{lem:F_assumptions}
    If $F\colon\Graphons\to\rr\cup\Set*{\infty}$ is sequentially $\delta_\cut$-lower semicontinuous, then
    \begin{enumerate}
        \item for every $\tau>0$ and $\squarebrack{U}\in\Graphons$, we have $\inf_{\Graphons}\Phi_F\round{\tau,\squarebrack{U};\slot{}}>-\infty$, where $\Phi_F$ is defined in equation~\eqref{eq:Phi_def}, and
        \item if $\round{\squarebrack{U_n}}_{n\in\Natural}\subset\Graphons$ with $\sup_{n\in\Natural}F\round{\squarebrack{U_n}} < \infty$, then $\round{\squarebrack{U_n}}_{n\in\Natural}$ admits a $\delta_\cut$-converging subsequence.
    \end{enumerate}
\end{lemma}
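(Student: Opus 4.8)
The plan is to prove both parts using only the sequential $\delta_\cut$-lower semicontinuity of $F$ together with the compactness of $(\Graphons,\delta_\cut)$ recorded just before Lemma~\ref{lem:delta_2_lsc}, and the $\delta_\cut$-lower semicontinuity of $\delta_2$ from Lemma~\ref{lem:delta_2_lsc}. The key point for (1) is that $\Phi_F(\tau,\squarebrack{U};\slot{})$ is itself sequentially $\delta_\cut$-lower semicontinuous: the term $F(\slot{})$ is lower semicontinuous by hypothesis, and the term $\tfrac{1}{2\tau}\delta_2^2(\slot{},\squarebrack{U})$ is lower semicontinuous in $\delta_\cut$ because $\delta_2(\slot{},\squarebrack{U})$ is (apply Lemma~\ref{lem:delta_2_lsc} with the second sequence constantly equal to $\squarebrack{U}$, and note $t\mapsto t^2$ is continuous nondecreasing on $\rr_+$). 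A sum of two lower semicontinuous functions is lower semicontinuous.

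For part (1), I would then argue as follows. Fix $\tau>0$ and $\squarebrack{U}\in\Graphons$. Suppose for contradiction that $\inf_{\Graphons}\Phi_F(\tau,\squarebrack{U};\slot{}) = -\infty$. Pick a sequence $\round{\squarebrack{V_n}}_{n\in\Natural}\subset\Graphons$ with $\Phi_F(\tau,\squarebrack{U};\squarebrack{V_n})\to-\infty$. Since $(\Graphons,\delta_\cut)$ is compact, after passing to a subsequence we may assume $\squarebrack{V_n}\xrightarrow{\delta_\cut}\squarebrack{V_\infty}$ for some $\squarebrack{V_\infty}\in\Graphons$. By the lower semicontinuity of $\Phi_F(\tau,\squarebrack{U};\slot{})$ established above, $\Phi_F(\tau,\squarebrack{U};\squarebrack{V_\infty}) \le \liminf_{n\to\infty}\Phi_F(\tau,\squarebrack{U};\squarebrack{V_n}) = -\infty$, which is impossible since $\Phi_F$ takes values in $\rr\cup\Set*{\infty}$. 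Hence $\inf_{\Graphons}\Phi_F(\tau,\squarebrack{U};\slot{})>-\infty$.

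For part (2), let $\round{\squarebrack{U_n}}_{n\in\Natural}\subset\Graphons$ with $\sup_{n\in\Natural}F\round{\squarebrack{U_n}}<\infty$. This is immediate from compactness alone: since $(\Graphons,\delta_\cut)$ is a compact metric space, every sequence in $\Graphons$ admits a $\delta_\cut$-convergent subsequence, so the boundedness hypothesis on $F$ is not even needed for this conclusion (it is stated presumably because in the general metric-space framework of \cite{ambrosio2005gradient} one does not have ambient compactness and must use coercivity of $F$ instead). I would simply invoke sequential compactness of $(\Graphons,\delta_\cut)$ and be done.

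The only mild subtlety — and the one worth spelling out carefully rather than the ``main obstacle,'' since there is no real obstacle here — is justifying that $\squarebrack{V}\mapsto\delta_2^2(\squarebrack{V},\squarebrack{U})$ is $\delta_\cut$-lower semicontinuous from Lemma~\ref{lem:delta_2_lsc}, which is stated for two varying arguments; one takes $\squarebrack{U_n}\equiv\squarebrack{U}$ (trivially $\delta_\cut$-convergent to $\squarebrack{U}$) and $\squarebrack{V_n}\to\squarebrack{V}$ in $\delta_\cut$ to get $\liminf_n\delta_2(\squarebrack{V_n},\squarebrack{U})\ge\delta_2(\squarebrack{V},\squarebrack{U})$, then square. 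Everything else is a direct appeal to compactness of $(\Graphons,\delta_\cut)$ and the definition of lower semicontinuity.
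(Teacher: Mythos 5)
Your proposal is correct and follows essentially the same route as the paper: the paper also relies on compactness of $(\Graphons,\delta_\cut)$ together with lower semicontinuity, simply citing the Weierstrass theorem to conclude that $\argmin_{\Graphons}\Phi_F(\tau,\squarebrack{U};\slot{})$ exists (hence the infimum is finite), and reading off part (2) from compactness alone. The only difference is cosmetic: you re-derive the relevant piece of Weierstrass by contradiction and you explicitly verify the $\delta_\cut$-lower semicontinuity of the penalty term $\tfrac{1}{2\tau}\delta_2^2(\slot{},\squarebrack{U})$ via Lemma~\ref{lem:delta_2_lsc}, a step the paper leaves implicit; your observation that part (2) does not actually need the boundedness hypothesis on $F$ because $(\Graphons,\delta_\cut)$ is compact is also accurate.
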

\begin{proof}
    Since $(\Graphons,\delta_\cut)$ is a compact metric space~\cite{Lovsz2007SzemerdisLF}, from the Weierstrass Theorem~\cite[Box 1.1]{santambrogio2015optimal}, both $\argmin_{\Graphons}F$ and $\argmin_{\Graphons}\Phi_F\round{\tau,\squarebrack{U};\slot{}}$ exist for all $\tau>0$ and $\squarebrack{U}\in\Graphons$. Thus the minima are greater than $-\infty$, and every sequence admits a $\delta_\cut$-converging subsequence.
\end{proof}

From Lemma~\ref{lem:delta_2_lsc} we know that the topology induced by $\delta_2$ is sequentially $\delta_{\cut}$-lower semicontinuous. This with Lemma~\ref{lem:F_assumptions} shows that the assumptions in \cite[Proposition 2.2.3]{ambrosio2005gradient} are satisfied, guaranteeing that $\mathrm{GMM}_{\delta_2,\delta_\cut}\round{\Phi_F,\squarebrack{U_0}}$ is non-empty. If $\abs{\pdiff F}$ is $\delta_\cut$-lower semicontinuous and $F$ is $\delta_{\cut}$-continuous on the sublevel sets of $\abs{\pdiff F}$, then it follows from~\cite[Theorem 2.3.1]{ambrosio2005gradient} that every element $\omega\in \mathrm{GMM}_{\delta_2,\delta_\cut}\round{\Phi_F,\squarebrack{U_0}}$, for $[U_0]\in \effdom(F)$, is a curve of maximal slope. For the sake of clarity, we record the above discussion as a theorem.

\begin{theorem}[Existence of curves of maximal slope-I]\label{thm:existence}
    Suppose $F\colon \Graphons\to\rr\cup\Set*\infty$ satisfies the following conditions.
    \begin{enumerate}
    \item $F$ is $\delta_\cut$-lower semicontinuous on $\effdom(F)$. 
    \item Its local slope $\abs{\pdiff F}$ is $\delta_\cut$-lower semicontinuous in $\effdom(F)$.  
    \item $F$ is $\delta_{\cut}$-continuous on the sublevel sets of $\abs{\pdiff F}$.
    \end{enumerate}
    Then every curve $\omega\in \mathrm{GMM}_{\delta_2,\delta_\cut}\round{\Phi_F,\squarebrack{U_0}}$ for $\squarebrack{U_0} \in \effdom(F)$ is a curve of maximal slope.
\end{theorem}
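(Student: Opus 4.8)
The plan is to deduce the statement from the abstract minimizing-movement theory of Chapter~2 of \cite{ambrosio2005gradient}, after checking that our two-topology setup $\bigl((\Graphons,\delta_2),\,\delta_\cut\bigr)$ meets the compatibility hypotheses required there. That theory is phrased for a metric space carrying a (possibly weaker) Hausdorff topology $\sigma$ used to test convergence of the implicit Euler iterates; in our case the metric is $\delta_2$ and $\sigma$ is the topology generated by $\delta_\cut$, and the bulk of the work is just verifying that this substitution is legitimate at every point.

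First I would record the structural facts that make the abstract machinery applicable. Since convergence in $\delta_2$ implies convergence in $\delta_\cut$, the $\delta_\cut$-topology is weaker than the $\delta_2$-topology; by Lemma~\ref{lem:delta_2_lsc} the map $(\squarebrack U,\squarebrack V)\mapsto \delta_2(\squarebrack U,\squarebrack V)$ is $\delta_\cut$-sequentially lower semicontinuous, which is exactly the joint lower-semicontinuity of the metric required in \cite[Section~2.1]{ambrosio2005gradient}. Next, compactness of $(\Graphons,\delta_\cut)$ together with hypothesis~(i) (i.e. $\delta_\cut$-lower semicontinuity of $F$ on $\effdom(F)$) gives, via Lemma~\ref{lem:F_assumptions}, that $\Phi_F(\tau,\squarebrack U;\cdot)$ is bounded below and attains its infimum, so the resolvent operator $J_\tau$ has nonempty values for every $\tau>0$ and every $\squarebrack U\in\Graphons$; hence discrete solutions (Definition~\ref{def:discrete_sol}) exist for every partition $P_\tauvec$. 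Moreover, because $(\Graphons,\delta_2)$ has finite diameter, the coercivity/boundedness conditions of \cite[equation 2.0.10]{ambrosio2005gradient} and the second line of \eqref{eq:GMM_conditions} hold automatically. Together these verify the hypotheses of \cite[Proposition~2.2.3]{ambrosio2005gradient}, so $\mathrm{GMM}_{\delta_2,\delta_\cut}(\Phi_F,\squarebrack{U_0})$ is nonempty for $\squarebrack{U_0}\in\effdom(F)$; this is not literally part of the claim, but it is what keeps the statement non-vacuous.

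The core of the argument is then to upgrade an arbitrary $\omega\in\mathrm{GMM}_{\delta_2,\delta_\cut}(\Phi_F,\squarebrack{U_0})$ to a curve of maximal slope by invoking \cite[Theorem~2.3.1]{ambrosio2005gradient}. Its remaining hypotheses are precisely (ii) and (iii): $\delta_\cut$-lower semicontinuity of the local slope $\abs{\pdiff F}$, and $\delta_\cut$-continuity of $F$ along sequences on which $\abs{\pdiff F}$ stays bounded. Under these, \cite[Theorem~2.3.1]{ambrosio2005gradient} yields that $\omega$ is locally absolutely continuous in $(\Graphons,\delta_2)$, that $t\mapsto F(\omega_t)$ agrees a.e.\ with a nonincreasing function $G$, and that $G'(t)\le -\tfrac12\abs{\omega'}^2(t)-\tfrac12\abs{\pdiff F}^2(\omega_t)$ for a.e.\ $t$; citing \cite[Theorem~1.2.5]{ambrosio2005gradient} to note that the local slope is a strong upper gradient is what makes Definition~\ref{def:curves_of_maximal_slope}, with the upper gradient taken to be $\abs{\pdiff F}$, directly applicable. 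Assembling these conclusions gives exactly that $\omega$ is a curve of maximal slope for $F$ with respect to its local slope.

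I do not expect a genuine obstacle here — the content is bookkeeping — but there are two points of care. The main one is to match our definitions to the conventions of \cite{ambrosio2005gradient}: one must check that the auxiliary topology $\sigma$ there is played consistently by $\delta_\cut$ in every relevant place (the definitions of discrete solution and of GMM, the lower semicontinuity of $F$ and of $\abs{\pdiff F}$, and the continuity of $F$ on sublevel sets of $\abs{\pdiff F}$), and that finiteness of $\mathrm{diam}(\Graphons,\delta_2)$ really does trivialize the coercivity assumptions rather than merely simplifying them. The secondary subtlety is that \cite[Theorem~2.3.1]{ambrosio2005gradient} is often stated with its conclusion attached to \emph{minimizing} movements; one should confirm that its proof uses only the discrete-solution energy estimates that every \emph{generalized} minimizing movement inherits, so that the conclusion extends verbatim to all of $\mathrm{GMM}_{\delta_2,\delta_\cut}(\Phi_F,\squarebrack{U_0})$, as claimed.
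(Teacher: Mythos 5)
Your proposal is correct and follows essentially the same route as the paper: verify the two-topology hypotheses via Lemma~\ref{lem:delta_2_lsc} and Lemma~\ref{lem:F_assumptions}, invoke \cite[Proposition~2.2.3]{ambrosio2005gradient} for nonemptiness of the GMM set, and then apply \cite[Theorem~2.3.1]{ambrosio2005gradient} under hypotheses (ii)--(iii) to conclude each GMM is a curve of maximal slope. Your secondary worry about MM versus GMM is moot since \cite[Theorem~2.3.1]{ambrosio2005gradient} is already stated for generalized minimizing movements.
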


In practice, it is difficult to compute $\abs{\pdiff F}$ or to ascertain its $\delta_{\cut}$-lower semicontinuity. This makes it difficult to apply Theorem~\ref{thm:existence} on natural examples. Later in  Theorem~\ref{thm:Existence_with_FrehetDerivative} we show that, when $f$ admits a Fr\'echet-like derivative that is $\lambda$-semiconvex on $(\Wcal,d_2)$ for some $\lambda\in\rr$, the existence of a curve of maximal slope follows without requiring $\delta_{\cut}$-lower semicontinuity of $\abs{\pdiff F}$.

\subsubsection{\texorpdfstring{$\Gamma$-convergence of penalized functional}{Gamma convergence of penalized functional}}
Recall that the goal of this paper is to show that the Euclidean gradient flows on matrices converge in suitable sense to gradient flow on graphons. In the previous section, we establish that the gradient flows in very general settings can be obtained as the limits of discrete solutions. In this section, we show that iterates of $J_{\tau}\vert_{\Graphons_k}$ converge in suitable sense to the iterates of $J_{\tau}\vert_{\Graphons}$ as $k\to \infty$.

More formally, for any $k\in\Natural$, define $J_{\tau}^{(k)}$ to be the resolvent operator on $\Graphons_k$ as
\begin{align}
    J_{\tau}^{(k)}\round{\squarebrack{U}} &\coloneqq \argmin_{\Graphons_k}\Phi_F(\tau, [U];\slot{})=\argmin_{\Graphons_k}\Set*{ F + \inv{2\tau}\delta^2_2\round{\squarebrack{U},\slot{}}},
\end{align}
for any $\tau>0$, $[U]\in \Graphons_k$. 

The following Lemma essentially shows $\Gamma$-convergence of the penalized functionals $\Phi_F$, restricted to $\Graphons_k$, as $k\rightarrow \infty$.

\begin{proposition}\label{prop:gamma_convergence}
    \sloppy Fix some $\delta_\cut$-continuous function $F\colon\Graphons\to\rr\cup\Set*{\infty}$ and some step size $\tau>0$.
    Consider a sequence $(\squarebrack{U_k}\in\Graphons_k)_{k\in\Natural}$ such that $\round{\squarebrack{U_k}}_{k\in\Natural}\xrightarrow{\delta_\cut}\squarebrack{U}$ as $k\to\infty$ for some $\squarebrack{U}\in\Graphons$. For each $k\in\Natural$, let $\squarebrack{U^+_{k, \tau}}\in \argmin_{\Graphons_k}\Phi_F\round{\tau,\squarebrack{U_k};\slot{}}$. Suppose $\squarebrack{U_{\infty,\tau}^+}$ is any $\delta_\cut$-limit point of the sequence $\round{\squarebrack{U_{k,\tau}^+}}_{k\in\Natural}$. Then $\squarebrack{U^+_{\infty,\tau}}\in \argmin_{\Graphons}\Phi_F\round{\tau,\squarebrack{U};\slot{}}$.
\end{proposition}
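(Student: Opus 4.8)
The proposition is a $\Gamma$-convergence statement, so the plan is to show that the penalized functionals $\Phi_F(\tau,[U_k];\cdot)$ on $\Graphons_k$ (extended by $+\infty$ to all of $\Graphons$) $\Gamma$-converge, in the $\delta_\cut$-topology, to $\Phi_F(\tau,[U];\cdot)$ on $\Graphons$, and then to use the standard corollary that $\delta_\cut$-limit points of minimizers are minimizers of the $\Gamma$-limit. Equi-coercivity is automatic because $(\Graphons,\delta_\cut)$ is compact (Lemma~\ref{lem:F_assumptions}), so only the two $\Gamma$-convergence inequalities are needed.

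For the $\Gamma$-$\liminf$ inequality I would argue as follows: if $[V_k]\in\Graphons_k$ and $[V_k]\xrightarrow{\delta_\cut}[V]$, then $\delta_\cut$-continuity of $F$ gives $F([V_k])\to F([V])$, and Lemma~\ref{lem:delta_2_lsc} (applied to $[V_k]\xrightarrow{\delta_\cut}[V]$ together with $[U_k]\xrightarrow{\delta_\cut}[U]$) gives $\liminf_k\delta_2([V_k],[U_k])\ge\delta_2([V],[U])$; adding these two yields $\liminf_k\Phi_F(\tau,[U_k];[V_k])\ge\Phi_F(\tau,[U];[V])$. For the $\Gamma$-$\limsup$ inequality I need, for each $[V]\in\Graphons$, a recovery sequence $[V_k]\in\Graphons_k$ with $[V_k]\xrightarrow{\delta_\cut}[V]$ and $\limsup_k\Phi_F(\tau,[U_k];[V_k])\le\Phi_F(\tau,[U];[V])$; since $F$ is $\delta_\cut$-continuous the $F$-term is taken care of by $[V_k]\to[V]$, so the content is to build $[V_k]$ with $\limsup_k\delta_2^2([V_k],[U_k])\le\delta_2^2([V],[U])$. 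The natural candidate is a block graphon obtained by taking the optimal $L^2$-coupling of $[V]$ and $[U]$ from Lemma~\ref{lemma:MinimizingCoupling_gen}, ``transporting'' this coupling onto a cut-close representative of $[U_k]$ (available since $[U_k]\xrightarrow{\delta_\cut}[U]$), and projecting the result onto the block partition $Q_k$ via conditional expectation, which is an $L^2$-contraction; checking that this $[V_k]$ both converges to $[V]$ in $\delta_\cut$ and satisfies the $\delta_2$-estimate is the delicate point.

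Assuming the two $\Gamma$-inequalities, the proposition follows in the usual way: along the subsequence $(k_j)$ with $[U_{k_j,\tau}^+]\xrightarrow{\delta_\cut}[U_{\infty,\tau}^+]$, and for any $[W]\in\Graphons$ with recovery sequence $[W_k]$, minimality gives $\Phi_F(\tau,[U_{k_j}];[U_{k_j,\tau}^+])\le\Phi_F(\tau,[U_{k_j}];[W_{k_j}])$; passing to $\liminf_j$, the $\Gamma$-$\liminf$ bounds the left side below by $\Phi_F(\tau,[U];[U_{\infty,\tau}^+])$ and the recovery sequence bounds the right side above by $\Phi_F(\tau,[U];[W])$, and since $[W]$ was arbitrary, $[U_{\infty,\tau}^+]\in\argmin_\Graphons\Phi_F(\tau,[U];\cdot)$. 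I expect the recovery-sequence construction to be the main obstacle: because $\delta_2$ is only $\delta_\cut$-lower semicontinuous and not continuous, $\delta_2(\cdot,[U_k])$ need not converge to $\delta_2(\cdot,[U])$, so one cannot simply block-average $[V]$ in isolation and must genuinely exploit that the $[U_k]$ are block graphons in order to realign the approximation.
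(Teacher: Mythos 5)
Your proposal follows the same structure as the paper's proof: the $\Gamma$-$\liminf$ inequality from $\delta_\cut$-continuity of $F$ together with Lemma~\ref{lem:delta_2_lsc}, a recovery sequence built by aligning $U_k$ to the optimal $L^2$-coupling of $U$ and $W$ via cut-convergence and then block-averaging the displacement through $\E[\,\cdot\,|\Fcal_k]$ (the $L^2$-contraction), and the final minimality-plus-limit argument. The paper's explicit recovery sequence $W_k^* \coloneqq \E[W^\psi|\Fcal_k] + (U_k^{\varphi_k} - \E[U^\varphi|\Fcal_k])$ is precisely the ``transport the coupling and project'' construction you describe, and the point you flag as delicate---that one must realign to the block structure of $[U_k]$ rather than block-average $W$ in isolation---is exactly the reason the correction term $Z_k = U_k^{\varphi_k} - \E[U^\varphi|\Fcal_k]$ appears.
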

\begin{proof}
Note that for any sequence of graphons $\round{\squarebrack{W_k}}_{k\in\Natural}$ such that $\round{\squarebrack{W_k}}_{k\in\Natural}\xrightarrow{\delta_\cut}\squarebrack{W}$ for some $\squarebrack{W}\in\Graphons$, by Lemma~\ref{lem:delta_2_lsc} we have
\begin{align}
    \liminf_{k\to\infty}\delta_2\round{\squarebrack{U_k},\squarebrack{W_k}} \geq \delta_2\round{\squarebrack{U},\squarebrack{W}}.\label{eq:lsc_UW}
\end{align}
We now construct a recovery sequence of graphons $(\squarebrack{W^*_k}\in\Graphons_k)_{k\in\Natural}\subset\Graphons$ such that
\begin{align}
    \lim_{k\to\infty}\delta_2\round{\squarebrack{U_k},\squarebrack{W^*_k}} = \delta_2\round{\squarebrack{U},\squarebrack{W}}\, , \qquad \text{and} \qquad \lim_{k\to\infty}\cutmetric{\squarebrack{W^*_k}}{\squarebrack{W}} = 0 .\label{eq:recovery_seq}
\end{align}
To do so, we first obtain $\varphi,\psi\in\Tcal$ from Definition~\ref{def:L2_metric} and~\cite[Theorem 6.16]{janson2010graphons} such that
\begin{align}
    \delta_2\round{\squarebrack{U},\squarebrack{W}} &= \enorm{U^\varphi - W^\psi}.\label{eq:UW_coupled}
\end{align}
Since $\delta_\cut\round{\squarebrack{U_k},\squarebrack{U}}\to 0$ as $k\to\infty$, using~\cite[Theorem 11.59]{lovasz2012large} we can find $\round{\varphi_k\in\Ical_k}_{k\in\Natural}$ such that 
\begin{align}
    \lim_{k\to\infty}\cutnorm{U_k^{\varphi_k} - U^\varphi} = 0.
    \label{eq:UUk_coupled}
\end{align}
We now define a sequence of kernels $\round{Z_k\in\Wcal_k}_{k\in\Natural}$ as
\[
    Z_k \coloneqq U_k^{\varphi_k} - \E{U^\varphi\given \Fcal_k},
\]
where $\Fcal_k = \sigma\Set*{Q_k\times Q_k}$ for every $k\in\Natural$. 
Note that
\begin{align*}
    \cutnorm{Z_k} &\leq \cutnorm{U_k^{\varphi_k} - U^\varphi} + \cutnorm{U^\varphi - \E{U^\varphi\given \Fcal_k}}\\
    &\leq \cutnorm{U_k^{\varphi_k} - U^\varphi} + \enorm{U^\varphi - \E{U^\varphi\given \Fcal_k}}.
\end{align*}
Also note that for any $V\in\Wcal$, the martingale sequence $\round{\E{V\given \Fcal_k}\in\Wcal_k}_{k\in\Natural}$ converges to $V\in\Wcal$ in $L^2\big(\interval{0,1}^{(2)}\big)$ as $k\to\infty$. Using $L^2$ convergence of the martingales $\E{U^\varphi\given \Fcal_k}$ and equation~\eqref{eq:UUk_coupled} we conclude that 
\begin{align}
    \cutnorm{Z_k}\to 0, \qquad \text{as}\quad k\to\infty.\label{eq:Z_k_to_0}
\end{align}
The sequence of kernels $\round{W_k^*\in\Wcal_k}_{k\in\Natural}$ can now be defined as
\[
    W_k^* \coloneqq \E{W^\psi\given \Fcal_k} + Z_k.
\]
It now follows that for any $k\in\Natural$,
\begin{align*}
    \cutnorm{W_k^* - W^\psi} &\leq \cutnorm{\E{W^\psi\given \Fcal_k} - W^\psi} + \cutnorm{Z_k}\\
    &\leq \enorm{\E{W^\psi\given \Fcal_k} - W^\psi} + \cutnorm{Z_k}.
\end{align*}
Using $L^2$ convergence of the martingales, and equation~\eqref{eq:Z_k_to_0} we obtain $\nrm{\cut}{\round{W_k^*}^{\psi_k} - W^\psi} \to 0$ and therefore have
\begin{align}
    \limsup_{k\to\infty}\cutmetric{\squarebrack{W_k^*}}{\squarebrack{W}} &= 0.\label{eq:Wk*_W}
\end{align}
Moreover, 
\begin{align}
    \enorm{U_k^{\varphi_k} - W_k^*}^2 &= \enorm{\E{U^\varphi\given \Fcal_k} - \E{W^\psi\given \Fcal_k}}^2\nonumber\\
    &\leq \enorm{U^\varphi - W^\psi}^2 = \delta_2^2\round{\squarebrack{U},\squarebrack{W}} && \text{(using equation~\eqref{eq:UW_coupled})},\label{eq:limsup_ineq}
\end{align}
where the last inequality follows from~\cite[Equation 9.7]{lovasz2012large}. From equation~\eqref{eq:limsup_ineq} and Lemma~\ref{lem:delta_2_lsc} we obtain
\begin{align}
    \lim_{k\to\infty}\delta_2\round{\squarebrack{U_k},\squarebrack{W^*_k}} &= \delta_2\round{\squarebrack{U},\squarebrack{W}}.\label{eq:Uk_Wk*}
\end{align}
 Now, by the definition of $U^+_{k,\tau}$, we have
\begin{align}
    F\round{\squarebrack{U^+_{k,\tau}}} + \inv{2\tau}\delta_2^2\round{\squarebrack{U_k},\squarebrack{U^+_{k,\tau}}} &\leq F\round{\squarebrack{W_k^*}} + \inv{2\tau}\delta_2^2\round{\squarebrack{U_k},\squarebrack{W^*_k}}.\label{eq:U_ktau_min}
\end{align}
Taking $\liminf_{k\to\infty}$ on both sides of equation~\eqref{eq:U_ktau_min}, and from equation~\eqref{eq:lsc_UW}, equation~\eqref{eq:recovery_seq} and the $\delta_\cut$-continuity of $F$, we get
\begin{align}
    &F\round{\squarebrack{U^+_{\infty,\tau}}} + \inv{2\tau}\delta_2^2\round{\squarebrack{U},\squarebrack{U^+_{\infty,\tau}}}\nonumber\\
    &\leq \liminf_{k\to\infty}F\round{\squarebrack{U^+_{k,\tau}}} + \liminf_{k\to\infty}\inv{2\tau}\delta_2^2\round{\squarebrack{U_k},\squarebrack{U^+_{k,\tau}}}\nonumber\\
    &\leq \liminf_{k\to\infty}F\round{\squarebrack{W_k^*}} + \liminf_{k\to\infty}\inv{2\tau}\delta_2^2\round{\squarebrack{U_k},\squarebrack{W^*_k}}
    = F\round{\squarebrack{W}} + \inv{2\tau}\delta_2^2\round{\squarebrack{U},\squarebrack{W}}.
\end{align}
Since $[W]\in\Graphons$ was arbitrary, this completes the proof.
\end{proof}

\subsection{Fr\'echet-like derivatives and Existence of gradient flow}
In Section~\ref{sec:frechet_derivatives} we introduce the notion of Fr\'echet-like differentiability. The most important result in Section~\ref{sec:frechet_derivatives} is Lemma~\ref{lem:upper_gradient_frechet} which relates the Fr\'echet-like derivative with the local slope of the function. This plays a crucial role in Section~\ref{sec:Existence_with_FD} where we show the existence of gradient flow in Theorem~\ref{thm:Existence_with_FrehetDerivative}. 
\subsubsection{Fr\'echet-like derivatives and local slope}\label{sec:frechet_derivatives}
Recall that given a function $F\colon\Graphons\to \mathbb{R}\cup\{\infty\}$, we can define an invariant function $f\colon\Wcal\to \mathbb{R}\cup\{\infty\}$ such that $f=F\circ \squarebrack{\slot{}}$.

\begin{definition}[Fr\'echet-like derivative on $\Wcal$]\label{def:frechet_like_derivative}
    \sloppy Suppose $f\colon\Wcal\to \rr\cup\Set*\infty$ is an invariant function. Let $V\in\effdom(f)$.
    The {\em Fr\'echet-like derivative} at $V$ is given by any $\phi \in L^\infty\big(\interval{0,1}^{(2)}\big)$ that satisfies the following condition,
    \begin{align}
      \lim_{W\in\Wcal,\,\norm{2}{W-V}\to 0}\frac{f(W) - f(V) - \round{ \inner{\phi,W}- \inner{\phi,V}}}{\norm{2}{W-V}} &= 0,\label{eq:frechet_limit_def}
    \end{align}
    where $\inner{\slot{},\slot{}}$ is the usual inner product on $L^2\big(\interval{0,1}^{(2)}\big)$. If $f$ admits a Fr\'echet-like derivative at every $V\in \effdom(f)$, we denote the map that takes $V$ to the corresponding $\phi$ by $D_\Wcal f$.
    In that case we say that $f$ is Fr\'echet differentiable. 
\end{definition}

In~\cite{DIAO2015183}, the authors consider G\^{a}teuax and Fr\'echet derivatives of functions on graphons with respect to the cut metric. However, as they remark~\cite[Remark 2.18, page 195]{DIAO2015183}, such a notion of Fr\'echet derivative is too weak to cover natural functions such as homomorphism densities.

\sloppy The next lemma shows that Fr\'echet-like derivatives behave nicely under the Lebesgue measure-preserving transforms and hence is a well-defined map from $\Graphons$ to $\widehat{L}^\infty(\interval{0,1}^2)$. 
That is, we can project $D_\Wcal f$ to obtain $D_{\Graphons} F \colon  \effdom(F) \to \widehat{L}^\infty(\interval{0,1}^2)$ as $D_{\Graphons} F \round{\squarebrack{V}} \coloneqq \squarebrack{D_\Wcal f(V)}$ for $V\in\Wcal$.

\begin{lemma}\label{lem:frechet_consistency}
    Let $f\colon\Wcal\to\rr\cup\Set*\infty$ be an invariant function. Let $V,V'\in \effdom(f)$ such that $V' = V^\varphi$ for some $\varphi\in\Tcal$. Suppose that the Fr\'echet-like derivatives $D_{\Wcal}f(V)$ and $D_{\Wcal}f(V')$ exist. If $\phi = D_\Wcal f(V)$ and $\phi' = D_\Wcal f(V')$, then $\phi'=\phi^\varphi$ a.e. In particular, this implies that $D_\Wcal f(V)\in L^\infty\big(\interval{0,1}^{(2)}\big)$ if it exists, is unique.
\end{lemma}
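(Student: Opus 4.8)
The plan is to verify the Fréchet-like limit condition~\eqref{eq:frechet_limit_def} directly for $\phi^\varphi$ at the point $V' = V^\varphi$, using the fact that the defining limit is taken over \emph{all} kernels $W$ in an $L^2$-neighborhood, and that the map $U \mapsto U^\varphi$ is an $L^2$-isometry of $\Wcal$ onto its image. First I would record the elementary facts: for any $\varphi\in\Tcal$ and any $U, U'\in\Wcal$ one has $\enorm{U^\varphi - (U')^\varphi} = \enorm{U - U'}$ (Lebesgue measure preservation) and $\inner{\psi^\varphi, U^\varphi} = \inner{\psi, U}$ for $\psi\in L^\infty$, $U\in L^2$; moreover $f(U^\varphi) = f(U)$ since $f$ is invariant. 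The subtlety is that $U\mapsto U^\varphi$ need not be surjective onto $\Wcal$ when $\varphi$ is not invertible, so I must be careful about which $W$'s I can reach.

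The key step handles this issue. Fix an arbitrary sequence $W_n \in \Wcal$ with $\enorm{W_n - V'} \to 0$; I must show the difference quotient in~\eqref{eq:frechet_limit_def}, with $f$, $V$ replaced by $f$, $V'$ and $\phi$ replaced by $\phi^\varphi$, tends to $0$ along $W_n$. The idea is to pull $W_n$ back through $\varphi$ in a measure-preserving way: since $\varphi$ is measure preserving, there is a measure-preserving map (or, more robustly, one can use the pushforward/conditional-expectation structure) producing kernels $\widetilde{W}_n\in\Wcal$ with $\widetilde{W}_n^{\,\varphi}$ approximating $W_n$ arbitrarily well in $L^2$; concretely, the $\sigma$-algebra generated by $\varphi$ lets one define $\widetilde W_n$ on $[0,1]^{(2)}$ with $\enorm{\widetilde W_n^{\,\varphi} - W_n}$ as small as desired, say $\le 1/n$. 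Since $f$, $\enorm{\cdot-V}$, and $\inner{\phi,\cdot}$ are each continuous in $L^2$ along these perturbations up to the relevant order (the numerator is Lipschitz in $W$ near $V$ because $\phi\in L^\infty$ and $f$ will be shown below to be locally Lipschitz), replacing $W_n$ by $\widetilde W_n^{\,\varphi}$ changes the difference quotient by $o(1)$. Then, using $f(\widetilde W_n^{\,\varphi}) = f(\widetilde W_n)$, $f(V') = f(V)$, the isometry $\enorm{\widetilde W_n^{\,\varphi} - V'} = \enorm{\widetilde W_n - V}$, and $\inner{\phi^\varphi, \widetilde W_n^{\,\varphi}} - \inner{\phi^\varphi, V'} = \inner{\phi, \widetilde W_n} - \inner{\phi, V}$, the difference quotient for $f$ at $V'$ along $\widetilde W_n^{\,\varphi}$ equals the difference quotient for $f$ at $V$ along $\widetilde W_n$, and $\enorm{\widetilde W_n - V}\to 0$, so it tends to $0$ by the hypothesis that $D_\Wcal f(V) = \phi$. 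This shows $\phi^\varphi$ satisfies~\eqref{eq:frechet_limit_def} at $V'$, hence by uniqueness $\phi' = \phi^\varphi$ a.e.

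For uniqueness of $D_\Wcal f(V)$ itself: if $\phi_1,\phi_2\in L^\infty$ both satisfy~\eqref{eq:frechet_limit_def} at $V$, subtract the two relations to get $\lim_{\enorm{W-V}\to 0}\inner{\phi_1 - \phi_2, W - V}/\enorm{W-V} = 0$. Taking $W = V + t\eta$ for small $t>0$ and any symmetric $\eta\in L^\infty$ with $V + t\eta\in\Wcal$ (possible for $t$ small on the set where $V$ is bounded away from $\pm 1$, or by a truncation argument) forces $\inner{\phi_1-\phi_2,\eta} = 0$ for a dense family of $\eta$, whence $\phi_1 = \phi_2$ a.e. I should also note $D_\Wcal f(V)\in L^\infty$ is part of the definition, so no extra argument is needed there beyond uniqueness.

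The main obstacle I anticipate is the non-invertibility of general $\varphi\in\Tcal$: one cannot simply substitute $W = U^{\varphi^{-1}}$. The fix above — approximating an arbitrary $W$ near $V'$ by $\widetilde W^{\,\varphi}$ for suitable $\widetilde W$ near $V$, via the $\sigma(\varphi)$-measurable structure and an $L^2$-density argument, together with local Lipschitzness of $f$ near $V$ (which follows from the existence of the Fréchet-like derivative: $\abs{f(W) - f(V)} \le \abs{\inner{\phi, W - V}} + o(\enorm{W-V}) \le (\infnorm{\phi} + o(1))\enorm{W-V}$) — is the crux, and the rest is bookkeeping with the isometry and invariance identities.
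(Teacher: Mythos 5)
The pullback step is where this breaks, and the gap is real. When $\varphi\in\Tcal$ is not invertible, the image of $U\mapsto U^\varphi$ is exactly the closed proper subspace of $\sigma(\varphi)\otimes\sigma(\varphi)$-measurable kernels in $L^2\big(\interval{0,1}^{(2)}\big)$, and $V'=V^\varphi$ lies in that subspace. If you take $W_n=V'+\tfrac{1}{n}\eta$ with $\eta$ symmetric, bounded, and orthogonal to the subspace (such $\eta$ exist precisely because $\varphi$ is non-invertible), then the best possible $\widetilde W_n^{\,\varphi}$ is $V'$ itself, and $\enorm{\widetilde W_n^{\,\varphi}-W_n}=\enorm{W_n-V'}$ for every choice of $\widetilde W_n$. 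So the claim that the error can be made ``as small as desired, say $\le 1/n$'' is false; the conditional-expectation projection gives an error \emph{comparable} to $\enorm{W_n-V'}$, not $o\big(\enorm{W_n-V'}\big)$. Propagated through your local Lipschitz bound this changes the difference quotient by $O(1)$, not $o(1)$, so you cannot conclude that $\phi^\varphi$ satisfies~\eqref{eq:frechet_limit_def} at $V'$ along such $W_n$.

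This matters because what you are trying to prove is strictly stronger than the lemma: that existence of $D_\Wcal f(V)$ by itself forces the Fr\'echet-like derivative at $V^\varphi$ to exist and equal $\phi^\varphi$. The lemma assumes both derivatives exist, and the paper's proof uses the second hypothesis in an essential way: it evaluates both Fr\'echet-like limits along the restricted family $W_n=V_n^\varphi$ with $V_n\to V$, uses invariance of $f$ and the $L^2$-isometry to cancel the $f$-terms, and obtains $\inner{\phi'-\phi^\varphi,\,V_n^\varphi-V'}=o\big(\enorm{V_n-V}\big)$; a sign/indicator-bump argument then forces $\phi'=\phi^\varphi$ a.e. Your proof never invokes the Fr\'echet-like condition at $V'$ until the uniqueness step at the end, which is precisely why the approach cannot control $f(W_n)$ for $W_n$ outside the image of $(\cdot)^\varphi$. (As a secondary remark, the sketch of the uniqueness step via dense perturbations also glosses over the constraint at the boundary $\{V=\pm 1\}$; the paper handles this with the explicit one-sided indicator argument, which you would need to replicate.)
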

\begin{proof}
    Let the sequence $\round{V_n}_{n\in\Natural}\subset \Wcal$ be such that $\enorm{V_n-V}\to 0$ as $n\to\infty$, then we have $\nrm{2}{V_n^{\varphi}-V'}\to 0$ as $n\to\infty$. We first show that 
    \begin{align}\label{eq:phi2_equals_phi1varphi}
        \lim_{n\to \infty} \frac{\inner{\phi' - \phi^\varphi,V_n^\varphi-V'}}{\norm{2}{V_n - V}}=0\;.
    \end{align}
    To this end, recall that $f$ is invariant and hence $f(V)=f(V^{\varphi})$ and $f(V_n^{\varphi})=f(V_n)$. Therefore, we have
    \begin{align}
        &\lim_{n\to \infty} \frac{\inner{\phi' - \phi^\varphi,V_n^\varphi}-\inner{\phi' - \phi^\varphi,V^\varphi}}{\norm{2}{V_n - V}}\nonumber\\
        &= \lim_{n\to \infty}\left[ \frac{f(V_n) - f(V) - \inner{\phi,V_n-V}}{\norm{2}{V_n-V}}
        - \frac{f(V_n^\varphi) - f(V^\varphi) - \inner{\phi', V_n^\varphi - V^\varphi}}{\norm{2}{V_n^\varphi-V^\varphi}}\right]\nonumber\\
        &= \lim_{n\to \infty} \frac{f(V_n) - f(V) - \inner{\phi, V_n- V}}{\norm{2}{V_n-V}} - \lim\limits_{n\to \infty} \frac{f(V_n^\varphi) - f(V^\varphi) - \inner{\phi', V_n^\varphi- V^\varphi}}{\norm{2}{V_n^\varphi-V^\varphi}}=0,\nonumber
    \end{align}
    where the last equality holds because each limit individually goes to $0$ by the definition of Fr\'echet differentiability and our assumption that $f$ has Fr\'echet-like derivative at $V$ and $V'$.

We now show that $\phi' - \phi^\varphi=0$ a.e. Let $A^+ \coloneqq \setinline{\phi' - \phi^\varphi > 0}$ and $A^- \coloneqq \setinline{\phi' - \phi^\varphi<0}$. It suffices to show that $\abs{A^{+}}+\abs{A^{-}}=0$. We only prove that $A^{+}$ has measure $0$, the proof for $A^{-}$ follows similarly. Let $A\coloneqq\setinline{V=1}\cap A^{+}$ and $B\coloneqq \setinline{V<1}\cap A^{+}$. We claim that both $A$ and $B$ have measure $0$. We prove this by contradiction. Suppose, for contradiction, that $\abs{B}>0$. Define the set $B^{\varphi}\coloneqq \setinline{(x, y) \in [0,1]^2\given (\varphi(x), \varphi(y))\in B}$ and note that $\abs{B}=\abs{B^{\varphi}}$ and hence $\abs{B^{\varphi}}$ has positive measure. Set $V_n\coloneqq V+\frac{1}{n}\chi_{B^{\varphi}}$ and note that $\enorm{V_n-V}=\frac{\abs{B}}{n}\to 0$ as $n\to \infty$. By equation~\eqref{eq:phi2_equals_phi1varphi} we conclude that
\begin{align*}
    0 = \inner{\phi' - \phi^\varphi, \chi_{B^{\varphi}}} = \int_{B} \round{\phi' - \phi^\varphi}(x,y)\diff x\diff y >0,
\end{align*}
which is a contradiction. Therefore, we must have that $B$ has $0$ measure. Repeating the same argument with $V_n\coloneqq V-\frac{1}{n}\chi_{A^{\varphi}}$ shows that $A$ has measure $0$. Since $A^{+}=A\cup B$, it follows that $A^{+}$ has measure zero. 

To conclude the second part, suppose that $\phi$ and $\phi'$ are two Fr\'echet-like derivatives of $f$ at $V$. Then, (taking $\varphi = \id$) we must have that $\phi = \phi'$ a.e. Hence, $D_{\Wcal}f(V)$ is a unique element in $L^\infty\big(\interval{0,1}^{(2)}\big)$.
\end{proof}

Let $F\colon\Graphons\to\mathbb{R}\cup\{\infty\}$ and let $f\colon\Wcal\to \mathbb{R}\cup\{\infty\}$ be the invariant extension of $F$. Lemma~\ref{lem:frechet_consistency} justifies saying $F$ has Fr\'echet-like derivative if $f$ has a Fr\'echet-like derivative. Note that the Lemma~\ref{lem:frechet_consistency} says that not only can the Fr\'echet-like derivative be thought of as a graphon, but also the two graphons $\squarebrack{D_\Wcal f(V)}$ and $[V]$ are `coupled' in the sense that they are two sets of ``edge weights'' associated with the edges of the same exchangeable continuum ``graph''. We make a formal definition to capture this relationship.

\begin{definition}[Coupled graphons]\label{def:coupled_graphons}
    For any $r\in\Natural$, we define the set $\squarebrack{W_1}\odot \squarebrack{W_2} \odot \cdots \odot \squarebrack{W_r}\subseteq \Graphons^r$ with initial labeling $\round{W_1, W_2,\ldots,W_r}\in\Wcal^r$ as
    \begin{align}
        \mathop{\odot}_{i=1}^r \squarebrack{W_i} &\coloneqq \Set*{\round{W_i^\varphi}_{i=1}^r \given \varphi\in\Tcal}.
    \end{align}
\end{definition}
Without loss of generality, we will always refer to the elements in $\mathop{\odot}_{i=1}^r \squarebrack{W_i}$ with the initial labeling $\round{W_i}_{i=1}^r$ unless specified. Since we can also relabel elements in $L^\infty\big(\interval{0,1}^{(2)}\big)$ (i.e., apply the map $V\mapsto V^\varphi$, for $V\in L^\infty\big(\interval{0,1}^{(2)}\big)$ and $\varphi\in\Tcal$), we can generalize Definition~\ref{def:coupled_graphons} to tuples with elements in $L^\infty\big(\interval{0,1}^{(2)}\big)\supset\Wcal$. That is, we can consider sets of the form
\begin{align}
    \mathop{\odot}_{i=1}^r \squarebrack{V_i} &\coloneqq \Set*{\round{V_i^\varphi}_{i=1}^r \given \varphi\in\Tcal},
\end{align}
with initial labeling $\big(V_i\in L^\infty\big(\interval{0,1}^{(2)}\big)\big)_{i=1}^r$. Therefore, from Lemma~\ref{lem:frechet_consistency}, if $V\in\squarebrack{V}\in\Graphons$, and $\phi = D_{\Wcal}f(V)$, then $\round{V,\phi}\in \squarebrack{V}\odot\squarebrack{\phi}$.

For $\round{V,\phi}\in \squarebrack{V}\odot\squarebrack{\phi}$, we define the set $G_V\subseteq \interval{0,1}^2$ as 
\begin{align}
    G_V \coloneqq \setinline{\abs{V}<1}\cup \setinline{V=1, \phi>0}\cup \setinline{V=-1,\phi <0}.\label{eq:G_V}
\end{align}
Since $(V,\phi)\in \squarebrack{V}\odot\squarebrack{\phi}$, the set $G_V$ is well defined on $\Graphons$. For any $\varphi\in\Tcal$, $G_{V^\varphi} = \round{G_V}^{\varphi} \coloneqq \setinline{(\varphi(x),\varphi(y))\in\interval{0,1}^2\given (x,y)\in G_V}$.

The next lemma gives an expression for the local slope of $F$ in terms of its Fr\'echet-like derivative. 
\begin{lemma}\label{lem:upper_gradient_frechet}
Let $F\colon\Graphons\to\rr\cup\Set*\infty$ be a function and $f\colon\Wcal\to\rr\cup\Set*\infty$ its invariant extension. Assume that for each $\squarebrack{V}\in \Graphons$ the Fr\'echet-like derivative $D_\Wcal f(V)$ exists for all $V\in\squarebrack{V}$, then the local slope (Definition~\ref{def:local_slope}) of $F$ at $\squarebrack{V}$ satisfies
\begin{align}\label{eqn:localSlope_FrechetDerivative}
    \abs{\pdiff F}\round{\squarebrack{V}} = \eta_F\round{\squarebrack{V}} &\coloneqq \sup_{W\in\Wcal}\frac{\round{\inner{\phi, V} - \inner{\phi, W} }^+ }{\norm{2}{V-W}}=\enorm{\phi\indicator{G_V}{}},
\end{align}
where $V\in\squarebrack{V}$, and $\phi = D_{\Wcal}f(V)$. In particular, $\abs{\pdiff F}\round{\squarebrack{V}}=\enorm{\phi}$ if $V\in \Set*{U\in \Wcal\given \abs{U}<1 \text{ a.e.}}\cap \effdom(f)$.
\end{lemma}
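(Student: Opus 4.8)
The plan is to reduce the computation of the local slope, which a priori involves the $\delta_2$-geometry on $\Graphons$, to an $L^2$ computation over the convex set $\Wcal$, then to linearize it using the Fréchet-like derivative, and finally to solve the resulting constrained optimization explicitly. Fix a representative $V\in[V]$ with $F([V])<\infty$ and put $\phi\coloneqq D_\Wcal f(V)$; by Lemma~\ref{lem:frechet_consistency} the quantity $\enorm{\phi\indicator{G_V}{}}$ is unchanged when $V$ is replaced by $V^\varphi$, so it is legitimately a function of $[V]$.

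First I would establish $\abs{\pdiff F}([V])=\limsup_{W\in\Wcal,\,\enorm{W-V}\to0}\frac{(f(V)-f(W))^+}{\enorm{W-V}}$. The inequality ``$\geq$'' is immediate from $\delta_2([W],[V])\le\enorm{W-V}$ together with invariance of $f$. For ``$\leq$'', given $[W_n]\to[V]$ in $\delta_2$, Definition~\ref{def:L2_metric} supplies $\varphi_n\in\Ical$ with $\enorm{V-W_n^{\varphi_n}}\le(1+\tfrac1n)\,\delta_2([W_n],[V])\to0$, and since $f(W_n^{\varphi_n})=F([W_n])$ by invariance, the $\delta_2$-difference quotient is controlled by the $L^2$-difference quotient at $W_n^{\varphi_n}$. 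Next, Definition~\ref{def:frechet_like_derivative} gives $f(V)-f(W)=\inner{\phi,V-W}+o(\enorm{W-V})$, so using $|a^+-b^+|\le|a-b|$ the last limsup equals $\limsup_{W\to V}\frac{(\inner{\phi,V-W})^+}{\enorm{W-V}}$. Finally, since $\Wcal$ is convex, for fixed $W$ the segment $W_t\coloneqq(1-t)V+tW$ stays in $\Wcal$, tends to $V$ as $t\downarrow0$, and the difference quotient $\frac{(\inner{\phi,V-\cdot})^+}{\enorm{\cdot-V}}$ is constant along it; this promotes the limsup to the full supremum, yielding the first equality $\abs{\pdiff F}([V])=\eta_F([V])$.

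The remaining task is to show $\eta_F([V])=\enorm{\phi\indicator{G_V}{}}$. Writing $D\coloneqq V-W$, the constraint $W\in\Wcal$ becomes the pointwise box $D(x,y)\in[V(x,y)-1,V(x,y)+1]$ (note that $0$ is always admissible), so $\eta_F([V])=\sup_D\frac{(\inner{\phi,D})^+}{\enorm{D}}$. For the upper bound I would split $\inner{\phi,D}=\int_{G_V}\phi D+\int_{G_V^c}\phi D$; on $G_V^c=\{V=1,\phi\le0\}\cup\{V=-1,\phi\ge0\}$ the box forces $D\ge0$, resp. $D\le0$, so the second integral is $\le0$, while Cauchy--Schwarz on the first gives $\inner{\phi,D}\le\enorm{\phi\indicator{G_V}{}}\,\enorm{D}$, hence $\eta_F([V])\le\enorm{\phi\indicator{G_V}{}}$. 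For the lower bound I would build a recovery family: on $G_V\cap\{\phi\ne0\}$ the ray $t\mapsto t\phi(x,y)$ stays inside $[V(x,y)-1,V(x,y)+1]$ for all $t\in[0,\delta(x,y)]$, where $\delta(x,y)>0$ a.e. (indeed $\delta(x,y)\ge(1-|V(x,y)|)/\infnorm{\phi}$ when $|V(x,y)|<1$ and $\delta(x,y)\ge2/\infnorm{\phi}$ when $|V(x,y)|=1$); set $D_\eps\coloneqq\min(\eps,\delta)\,\phi\,\indicator{G_V}{}$, which is symmetric and admissible. Then $\inner{\phi,D_\eps}/\eps=\int_{G_V}\phi^2\,\tfrac{\min(\eps,\delta)}{\eps}$ and $\enorm{D_\eps}^2/\eps^2=\int_{G_V}\phi^2\big(\tfrac{\min(\eps,\delta)}{\eps}\big)^2$, both of which converge to $\enorm{\phi\indicator{G_V}{}}^2$ as $\eps\downarrow0$ by dominated convergence (dominating function $\phi^2\in L^1$), so $\inner{\phi,D_\eps}/\enorm{D_\eps}\to\enorm{\phi\indicator{G_V}{}}$; the degenerate case $\enorm{\phi\indicator{G_V}{}}=0$ already follows from the upper bound. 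The ``in particular'' clause is then immediate, since $|V|<1$ a.e.\ forces $G_V=[0,1]^{(2)}$ up to a null set. I expect the recovery family to be the main obstacle: one cannot take $D$ proportional to $\phi$ outright, since $\phi$ may leave the box near $\{|V|=1\}$, so the pointwise truncation by $\delta$ and the dominated-convergence bookkeeping are where the real care is needed; the upper bound and the reduction steps are comparatively routine.
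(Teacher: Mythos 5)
Your proposal is correct and follows essentially the same route as the paper: reduce the $\delta_2$ local slope to an $L^2$ $\limsup$ over $\Wcal$, linearize via the Fr\'echet-like derivative, promote the $\limsup$ to a supremum along line segments using the convexity of $\Wcal$, and then solve the resulting box-constrained optimization by splitting the inner product over $G_V$ and its complement. The only differences are cosmetic --- for the upper bound the paper routes through an intermediate kernel $W_0$ agreeing with $W$ on $G_V$ and with $V$ off it, while you split the integral directly; and for the lower bound the paper uses a uniform threshold $A_{\delta_n}=\{|V|<\delta_n\}\cup\{V=1,\phi>0\}\cup\{V=-1,\phi<0\}$ with a uniform $t_n$ and dominated convergence as $\delta_n\to1$, while you use a pointwise cap $\min(\eps,\delta(x,y))$ --- but the underlying truncation-near-the-boundary idea is identical.
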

\begin{proof}
Fixing $\squarebrack{V}\in \effdom(F)$, we verify using Lemma~\ref{lem:frechet_consistency} that $\eta_F$ is well defined on $\Graphons$. If $V_2 = V_1^\varphi$ for $V_1\in\squarebrack{V}$ for some $\varphi\in\Tcal$, and $\phi_1 = D_\Wcal f(V_1)$ then $D_\Wcal f(V_2) = \phi_1^\varphi \eqqcolon \phi_2$, and
\begin{align}
    \begin{split}
        \sup_{W\in\Wcal}\frac{\round{\inner{\phi_1, V_1- W} }^+ }{\norm{2}{V_1-W}} &= \sup_{W\in\Wcal}\frac{\round{\inner{\phi_1^\varphi, V_1^\varphi} - \inner{\phi_1^\varphi, W^\varphi}  }^+}{\norm{2}{V_1^\varphi - W^\varphi}}\\
        &= \sup_{W\in\Wcal}\frac{\round{\inner{\phi_2, V_2} - \inner{\phi_2, W} }^+ }{\norm{2}{W-V_2}}.
    \end{split}
\end{align}
We will now break the proof of the claim into two parts:
    \begin{enumerate}
        \item For any $\varepsilon>0$, let us consider $\squarebrack{W}\in\Graphons$ such that $\delta_2\round{\squarebrack{V},\squarebrack{W}} < \delta_\varepsilon/2$ for some $\delta_\varepsilon> 0$ such that if $\varepsilon\to 0$, then $\delta_\varepsilon\to 0$. From Definition~\ref{def:cut_metric}, there exists $\varphi\in\Ical$ such that $\delta_2\round{\squarebrack{V},\squarebrack{W}} < \norm{2}{W^\varphi-V} \leq \delta_2\round{\squarebrack{V},\squarebrack{W}} + \delta_\varepsilon/2$, i.e.,
        \begin{align}
            \delta_\varepsilon/2 > \delta_2\round{\squarebrack{V},\squarebrack{W}} \geq \norm{2}{W^\varphi - V} - \delta_\varepsilon/2 > 0.\label{eq:W_phi_approx}
        \end{align}
        From assumption if we choose $W^\varphi\in\Wcal$, since $\norm{2}{W^\varphi - V} < \delta_\varepsilon$ we get
        \begin{align}
            -\varepsilon \leq \frac{f(W^\varphi) - f(V) - \round{\inner{\phi, W^\varphi} - \inner{ \phi, V}}}{\norm{2}{W^\varphi-V}} \leq \varepsilon,\label{eq:limit_in_eps}
        \end{align}
        where $\phi = D_\Wcal(V)$. Using equations~\eqref{eq:limit_in_eps} and equation~\eqref{eq:W_phi_approx}, we get
        \begin{align}
            \frac{\round{ F\round{\squarebrack{V}} - F\round{\squarebrack{W}} }^+}{\delta_2\round{\squarebrack{V},\squarebrack{W}}} &\leq \frac{\round{ F\round{\squarebrack{V}} - F\round{\squarebrack{W}} }^+}{\norm{2}{W^\varphi - V} - \delta_\varepsilon/2}\nonumber\\
            &\leq \frac{\round{\inner{\phi, V} - \inner{\phi, W^\varphi} + \varepsilon\norm{2}{W^\varphi - V}}^+}{\norm{2}{W^\varphi - V} - \delta_\varepsilon/2}\nonumber\\
            &\leq \round{\frac{\round{\inner{\phi, V} - \inner{\phi, W^\varphi}}^+}{\norm{2}{W^\varphi - V}} + \varepsilon}\frac{\norm{2}{W^\varphi - V}}{\norm{2}{W^\varphi - V} - \delta_\varepsilon/2}\nonumber\\
            &\leq \round{\eta_F\round{\squarebrack{V}} + \varepsilon}\frac{\norm{2}{W^\varphi - V}}{\norm{2}{W^\varphi - V} - \delta_\varepsilon/2},\label{eq:upper_bound_upper_gradient}
        \end{align}
        for some $V\in\squarebrack{V}$. Taking $\varepsilon\to 0$ in equation~\eqref{eq:upper_bound_upper_gradient} we get
        \begin{align}
            \abs{\pdiff F}\round{\squarebrack{V}} &\leq \eta_F\round{\squarebrack{V}}.
        \end{align}
        \item When $\eta_F\round{\squarebrack{V}}> 0$, for all $\varepsilon\in\round{0,\eta_F\round{\squarebrack{V}}}$, by the definition of $\eta_F\round{\squarebrack{V}}$, for any $V\in\squarebrack{V}$ and $\phi = D_\Wcal(V)$, there exists $W\in\Wcal$ such that
        \begin{align}
            0 < \varepsilon < \eta_F\round{\squarebrack{V}} \leq \frac{\inner{\phi, V} - \inner{\phi, W} }{\norm{2}{V-W}} + \varepsilon.
        \end{align}
        Let $W_t \coloneqq (1-t)V + tW$ for all $t\in\interval{0,1}$. Since $\Wcal$ is a convex subset of $L^2(\interval{0,1}^{(2)})$, the curve $\round{W_t}_{t\in\interval{0,1}}\subseteq \Wcal$. Since $\norm{2}{W_t - V}\to 0$ as $t\to 0$, by assumption we have
        \begin{align}
            \lim_{t\to 0} \frac{f(W_t) - f(V) - \round{\inner{\phi, W_t} - \inner{\phi, V}}}{\norm{2}{W_t-V}} &= 0\nonumber\\
            \implies \lim_{t\to 0} \frac{f(W_t) - f(V) - t\round{\inner{\phi, W} - \inner{\phi, V}}}{t\norm{2}{W-V}} &= 0\nonumber\\
            \implies \lim_{t\to 0}\frac{f(V) - f(W_t)}{t\norm{2}{W-V}} = \frac{\inner{\phi, V} - \inner{\phi, W} }{\norm{2}{V-W}} &\geq \eta_F\round{\squarebrack{V}} - \varepsilon > 0\nonumber\\
            \implies \lim_{t\to 0}\frac{f(V) - f(W_t)}{\norm{2}{W_t-V}} = \lim_{t\to 0}\frac{\round{f(V) - f(W_t)}^+}{t\norm{2}{W-V}} &\geq \eta_F\round{\squarebrack{V}} - \varepsilon\nonumber\\
            \implies \lim_{t\to 0}\frac{\round{F\round{\squarebrack{V}} - F\round{\squarebrack{W_t}}}^+}{\delta_2\round{\squarebrack{W_t},\squarebrack{V}}} &\geq \eta_F\round{\squarebrack{V}} - \varepsilon.\label{eq:lower_bound_upper_gradient}
        \end{align}
        Therefore, the curve $\round{\squarebrack{W_t}}_{t\in\interval{0,1}} \to \squarebrack{V}$ along which  equation~\eqref{eq:lower_bound_upper_gradient} holds for every $\varepsilon>0$. When $\eta_F\round{\squarebrack{V}} = 0$, equation~\eqref{eq:lower_bound_upper_gradient} trivially holds for $\varepsilon=0$.
    \end{enumerate}
    Combining the two parts, we find that $\abs{\pdiff F}\round{\squarebrack{V}} = \eta_F\round{\squarebrack{V}}$.

    For any $n\in\Natural$ and $\delta_n>0$, let $A_{\delta_n}\coloneqq \setinline{\abs{V}<\delta_n}\cup \setinline{V=1, \phi>0}\cup \setinline{V=-1, \phi<0}$. Note that for any $t_n>0$ and $\delta_n>0$, define $W_n\coloneqq V-t_n\phi \indicator{A_{\delta_n}}{}$ and 
    \begin{align}
    \label{eqn:aux}
        \frac{\round{\inner{\phi, V} - \inner{\phi, W_n} }^+ }{\norm{2}{V-W_n}} =\enorm{\phi\indicator{A_{\delta_n}}{}}.
    \end{align}
    Let $\round{\delta_n}_{n\in\Natural}$ be a sequence in $(0,1)$ such that $\lim_{n\rightarrow\infty}\delta_n= 1$. Since $\phi \in L^\infty\big(\interval{0,1}^2\big)$, for every $\delta_n>0$, there exists $t_n>0$ such that $W_n=V-t_n\phi \indicator{A_{\delta_n}}{}\in \Wcal$ for each $n\in \Natural$. It follows from equation~\eqref{eqn:aux} that 
    \begin{align}
    \label{eqn:lowerBound}
        \eta_F([V])\ge \limsup_{n\to\infty}\enorm{\phi\indicator{A_{\delta_n}}{}}=\enorm{\phi\indicator{G_V}{}},
    \end{align}
    where the last equality follows from the dominated convergence theorem and the fact that  $\indicator{A_{\delta_n}}{}\to \indicator{G_V}{}$ a.e. as $\delta_n\to 1$.
    
    For any $W\in \Wcal$, define $W_0=W$ on $G_V$ and $W_0=V$ otherwise. Note that 
    \begin{align}
        \inner{\phi, V-W} &=\int_{G_V}\phi(V-W)\diff \lambda_{\interval{0,1}^2}  +\int_{\interval{0,1}^2\setminus G_V} \phi (V-W)\diff \lambda_{\interval{0,1}^2} \nonumber\\
        &=\int_{G_V}\phi(V-W_0)\diff \lambda_{\interval{0,1}^2}  +\int_{\interval{0,1}^2\setminus G_V} \phi (V-W)\diff \lambda_{\interval{0,1}^2} \nonumber\\
        &\leq \int_{G_V}\phi(V-W_0)\diff \lambda_{\interval{0,1}^2}  = \int \phi (V-W_0)\diff \lambda_{\interval{0,1}^2}\nonumber\\
        &= \inner{\phi, V-W_0},\label{eq:phiVW_leq_phiVtilW}
    \end{align}
    where the inequality above follows from the fact that $\phi(V-W)\leq 0$ on $\interval{0,1}^2\setminus G_V$. Using that $\nrm{2}{V-W_0}\le \enorm{V-W}$, we obtain
    \begin{align*}
        \frac{\round{\inner{\phi, V} - \inner{\phi, W} }^+ }{\norm{2}{V-W}}\leq \frac{\round{\inner{\phi, V} - \inner{\phi, W_0} }^+ }{\enorm{V-W_0}}.
    \end{align*}
    It therefore follows that 
    \begin{align}
    \label{eqn:Eqn_eta}
    \eta_F\round{\squarebrack{V}} \coloneqq \sup_{W}\frac{\round{\inner{\phi, V} - \inner{\phi, W} }^+ }{\norm{2}{V-W}},
    \end{align}
    where the supremum is taken over $W\in\Wcal$ such that $W=V$ on $\interval{0,1}^2\setminus G_V$. For any such $W$, we obtain by the Cauchy--Schwarz inequality that $\inner{\phi, V-W}\leq \enorm{\phi\indicator{G_V}{}}\enorm{V-W}$. Therefore, it follows from  that 
    \begin{align}
        \label{eqn:upperBound}
        \eta_F\round{\squarebrack{V}} &\leq \enorm{\phi\indicator{G_V}{}}.
    \end{align}
   Combining equations~\eqref{eqn:lowerBound} and~\eqref{eqn:upperBound}, the conclusion follows.
\end{proof}

\begin{remark}
    We can define a similar expression for the set valued function $G$ when $\effdom(f)$ is a cubic domain. As an example, when $\effdom(f) = \setinline{W\in\Wcal \given a \leq W \leq b \text{ a.e.}}$ for some $-1\leq a \leq b\leq 1$ (see Section~\ref{sec:scalar_entropy} for a discussed example), we can define $G_V\subseteq [0,1]^{(2)}$ for any $V\in\effdom(f)$ as
    \begin{align}
        G_V = \Set*{a < V < b} \cup \Set*{V=b,\phi>0} \cup \Set*{V=a,\phi<0},
    \end{align}
    for $(V,\phi\coloneqq D_\Wcal f(V)) \in \squarebrack{V}\odot\squarebrack{\phi}$. Lemma~\ref{lem:upper_gradient_frechet} continues to hold when $V\in\effdom(f)\subset\Wcal$ whenever $\effdom(f)$ is a cubic domain. In this case, the set valued function $G$ is defined as described above and the proof of Lemma~\ref{lem:upper_gradient_frechet} can be modified accordingly.
\end{remark}

\begin{remark}\label{rem:LocalSlope_is_phi_and_hence_lsc}
Lemma~\ref{lem:upper_gradient_frechet} has an important consequence that will be used later. As the metric derivative of a gradient flow is given by its local slope at each point, Lemma~\ref{lem:upper_gradient_frechet} says that if $\omega$ is a gradient flow of $F$, then its local slope is given by the $L^2$-norm of its Fr\'echet-like derivative, i.e., $\abs{\pdiff F}(\omega_t)=\enorm{\phi(\omega_t)\indicator{G_{\omega_t}}{}}=\enorm{D_{\Graphons}F(\omega_t)\indicator{G_{\omega_t}}{}}$ for all $t>0$. Here for any $t>0$,
\[
    D_{\Graphons}F(\omega_t)\indicator{G_{\omega_t}}{} \coloneqq \Set*{\round{D_\Wcal f(U_t)\indicator{G_{U_t}}{}}^\varphi\in L^\infty\big(\interval{0,1}^{(2)}\big) \given \varphi\in\Tcal},
\]
for $U_t\in\omega_t$. Since the $L^2$-norm is invariant under measure preserving transformations~\cite[Lemma 5.5]{janson2010graphons}, the $L^2$-norms of graphons are well-defined. In fact, if one defines a kernel valued curve $(W_t)_{t\in [0, T]}$ by setting $W'_t=-D_{\Wcal}f(W_t)\indicator{G_{W_t}}{}$ pointwise, then the curve $t\mapsto \omega_t=[W_t]$ is a gradient flow (a.k.a. curve of maximal slope). This is shown in Lemma~\ref{lem:EVI_MM} which in turn shows the existence of a gradient flow under suitable assumption (See Theorem~\ref{thm:Existence_with_FrehetDerivative}). 
\end{remark}

\begin{lemma}\label{lem:ACtoGF}
    Let $F\colon\Graphons\to\rr\cup\Set*\infty$ be a function and $f\colon\Wcal\to\rr\cup\Set*\infty$ be its invariant extension. Let $F$ be Fr\'echet differentiable. Let us consider $\omega\in\mathrm{AC}(\Graphons,\delta_2)$, and let $\round{W_t}_{t\in\interval{0,1}}\in\mathrm{AC}\round{\Wcal,d_2}$ be its representative curve such that $W'_{t} = -\eta_{F}\round{\squarebrack{W_{t}}}N_t$ for a.e. $t\in\interval{0,1}$ for some $N_t\in L^\infty\big(\interval{0,1}^{(2)}\big)$ satisfying $\enorm{N_t}=1$ and $\inner{\phi_t, N_t}=\eta_F\round{\squarebrack{W_{t}}}$.
    Then, $\omega$ is a curve of maximal slope on $(\Graphons,\delta_2)$.
\end{lemma}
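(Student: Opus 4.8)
The plan is to verify directly the two clauses of Definition~\ref{def:curves_of_maximal_slope} for $\omega$, using the Fr\'echet-like derivative to run a chain rule along the given representative curve $\round{W_t}_{t\in\interval{0,1}}$; note that one should \emph{not} reparametrize here, since the hypothesis already places $\omega$ in the parametrization in which $\abs{\omega'}$ equals $\abs{\pdiff F}\circ\omega$, and this is exactly the parametrization a curve of maximal slope must have. Write $\phi_t\coloneqq D_\Wcal f(W_t)$, which exists for a.e.\ $t$ by hypothesis. First I identify the metric derivative and the local slope along $\omega$: by Corollary~\ref{cor:metric_derivative_eq_enorm_Wt'}, $\abs{\omega'}(t)=\enorm{W'_t}=\eta_F\round{\squarebrack{W_t}}\enorm{N_t}=\eta_F\round{\squarebrack{W_t}}$ for a.e.\ $t$, while Lemma~\ref{lem:upper_gradient_frechet} gives $\abs{\pdiff F}(\omega_t)=\eta_F\round{\squarebrack{W_t}}$. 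Hence $\abs{\omega'}(t)=\abs{\pdiff F}(\omega_t)=\eta_F\round{\squarebrack{W_t}}$ a.e., and the inequality to be proven collapses to $G'(t)\leq-\eta_F^2\round{\squarebrack{W_t}}$ a.e.; so it suffices to produce a non-increasing $G$ with $F\circ\omega=G$ a.e.\ and $G'(t)=-\eta_F^2\round{\squarebrack{W_t}}$ a.e.

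The natural candidate is $G\coloneqq F\circ\omega=f\circ W$ itself, provided it is absolutely continuous. Granting that for a moment, I compute its derivative. Fix $t$ at which $\round{W_t}$ is $L^2$-differentiable and $W_t\in\effdom(f)$ (a.e.\ $t$). By Definition~\ref{def:frechet_like_derivative},
\[
    f(W_{t+h})-f(W_t)=\inner{\phi_t,W_{t+h}-W_t}+o\round{\enorm{W_{t+h}-W_t}}\qquad\text{as }h\to 0 .
\]
Dividing by $h$: the linear term tends to $\inner{\phi_t,W'_t}$ since $(W_{t+h}-W_t)/h\to W'_t$ in $L^2$; and since $\enorm{W_{t+h}-W_t}=\abs{h}\enorm{W'_t}+o(\abs{h})$ stays $\BigO{\abs{h}}$, the remainder $o\round{\enorm{W_{t+h}-W_t}}$ is $o(\abs{h})$ and disappears. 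Therefore
\[
    \round{f\circ W}'(t)=\inner{\phi_t,W'_t}=-\eta_F\round{\squarebrack{W_t}}\inner{\phi_t,N_t}=-\eta_F^2\round{\squarebrack{W_t}},
\]
using the hypothesis $\inner{\phi_t,N_t}=\eta_F\round{\squarebrack{W_t}}$. Combined with absolute continuity of $f\circ W$ this gives $F(\omega_s)-F(\omega_r)=-\int_r^s\eta_F^2\round{\squarebrack{W_u}}\diff u$, so $G=F\circ\omega$ is non-increasing and $G'(t)=-\eta_F^2\round{\squarebrack{W_t}}=-\inv{2}\abs{\omega'}^2(t)-\inv{2}\abs{\pdiff F}^2(\omega_t)$ a.e., which is Definition~\ref{def:curves_of_maximal_slope}.

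It remains to justify the absolute continuity of $F\circ\omega=f\circ W$, and this is the crux. One clean route: since $D_\Wcal f$ is bounded on $\effdom(f)$ in all the cases of interest (and can be taken as a standing hypothesis; for instance $D_\Wcal\Ent$ and the homomorphism derivatives are bounded on their domains), $t\mapsto f\bigl((1-t)V+tW\bigr)$ is continuous and differentiable at every $t$ with uniformly bounded derivative, so $f$ is Lipschitz w.r.t.\ $\enorm{\cdot}$ along segments; composing with $W\in\mathrm{AC}\round{\Wcal,d_2}$ (so $\enorm{W_s-W_r}\leq\int_r^s\enorm{W'_t}\diff t$ with $\enorm{W'_\cdot}\in L^1$) yields that $f\circ W$ is absolutely continuous. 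Alternatively, one can invoke that the local slope is a weak upper gradient for $F$ (\cite[Theorem 1.2.5]{ambrosio2005gradient}, already used after Definition~\ref{def:curves_of_maximal_slope}) together with the integrability of $t\mapsto\abs{\pdiff F}(\omega_t)\abs{\omega'}(t)=\eta_F^2\round{\squarebrack{W_t}}$ along $\omega$, which again holds in the finite-energy setting in which the lemma is applied.

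The main obstacle is precisely this last step. Fr\'echet-like differentiability is an intrinsically \emph{pointwise} (in the kernel variable) statement, so a direct telescoping estimate over a partition of $\interval{0,1}$ would require a modulus of differentiability uniform along the curve, which is not available from the bare hypotheses; one needs either a boundedness hypothesis on $D_\Wcal f$ (yielding Lipschitzness of $f$ and hence of $f\circ W$) or the upper-gradient machinery. Once absolute continuity of $F\circ\omega$ is in hand, the rest is the elementary chain-rule computation above, whose only content is that the Fr\'echet-like derivative pins down the derivative of $F\circ\omega$ with the correct sign, upgrading the mere bound $\abs{G'}\leq\eta_F^2$ to the equality $G'=-\eta_F^2$ demanded by Definition~\ref{def:curves_of_maximal_slope}.
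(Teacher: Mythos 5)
Your proof is the same argument as the paper's: identify the metric derivative $\enorm{W'_t}=\eta_F\round{\squarebrack{W_t}}$, apply the Fr\'echet-like chain rule to get $\deriv{}{t}f(W_t)=\inner{\phi_t,W'_t}=-\eta_F^2\round{\squarebrack{W_t}}$, and read off Definition~\ref{def:curves_of_maximal_slope}. The point you raise about the absolute continuity of $f\circ W$ is a fair one: a.e.\ differentiability of $f\circ W$ with derivative $-\eta_F^2$ does not, on its own, make $f\circ W$ non-increasing or yield the integral identity $F(\omega_s)-F(\omega_r)=-\int_r^s\eta_F^2\round{\squarebrack{W_u}}\diff u$, since a singular (Cantor-type) part could hide in between. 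The paper's proof leaves this step implicit behind the phrase ``by the absolute continuity of the curve,'' so your concern identifies a genuine, if implicit, gap. Either of your two fixes resolves it: a uniform bound on $D_\Wcal f$ along the curve gives local Lipschitzness of $f$ w.r.t.\ $d_2$ and hence absolute continuity of $f\circ W$, or one may invoke the upper-gradient machinery of~\cite[Theorem 1.2.5]{ambrosio2005gradient} together with the $L^1$-integrability of $t\mapsto\eta_F^2\round{\squarebrack{W_t}}$; the latter is what is in force when the lemma is actually used in Theorem~\ref{thm:Existence_with_FrehetDerivative}, since the curve there arises as a minimizing movement with finite energy.

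One small imprecision in your write-up: you invoke Corollary~\ref{cor:metric_derivative_eq_enorm_Wt'} to conclude $\abs{\omega'}(t)=\enorm{W'_t}$, but that corollary requires $(W_t)$ to be the \emph{minimizing} representative supplied by Lemma~\ref{lem:representation_graphons}, which the hypothesis of Lemma~\ref{lem:ACtoGF} does not stipulate. This is harmless, though: the one-sided bound $\abs{\omega'}(t)\leq\enorm{W'_t}=\eta_F\round{\squarebrack{W_t}}$ always holds and already suffices, since with $G'(t)=-\eta_F^2\round{\squarebrack{W_t}}$ and $\abs{\pdiff F}(\omega_t)=\eta_F\round{\squarebrack{W_t}}$ one gets $G'(t)\leq-\inv{2}\abs{\omega'}^2(t)-\inv{2}\abs{\pdiff F}^2(\omega_t)$ directly.
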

\begin{proof}
    Since $\round{W_{t}}_{t\in\interval{0,1}}\in \mathrm{AC}\round{\Wcal,d_2}$, the metric derivative of $\round{W_{t}}_{t\in\interval{0,1}}$ with respect to $d_2$ at any $t\in (0,1)$ is given by 
    \begin{align}
        \lim_{h\to 0}\frac{\enorm{W_{t+h} - W_{t}}}{\abs{h}} &= \enorm{W'_{t}}
        = \enorm{\eta_{F}\round{\squarebrack{W_{t}}}N_{t}}
        = \abs{\eta_{F}\round{\squarebrack{W_{t}}}}.
    \end{align}
    That is, the metric derivative of $\round{W_{t}}_{t\in\interval{0,1}}\in\mathrm{AC}\round{\Wcal,d_2}$ equals the upper gradient. Moreover, by the absolute continuity of the curve and from Definition~\ref{def:frechet_like_derivative},
    \begin{align}
        \deriv{}{t}f\round{W_{t}} &= \lim_{h\to 0}\frac{f\round{W_{t+h}} - f\round{W_{t}}}{h}\nonumber\\
        &= \lim_{h\to 0}\frac{\inner{\phi_{t},W_{t+h}} - \inner{\phi_{t},W_{t}} + o\round{\enorm{W_{t+h}-W_{t}}}}{h}\nonumber\\
        &= \inner{\phi_{t},W'_{t}} + 0
        = -\inner{\phi_{t},N_{t}}\eta_{F}\round{\squarebrack{W_{t}}}
        = -\eta_{F}^2\round{\squarebrack{W_{t}}},
    \end{align}
    where $\phi_{t} = D_{\Wcal}f\round{W_{t}}$. Thus, $\round{W_{t}}_{t\in\interval{0,1}}$ satisfies Definition~\ref{def:curves_of_maximal_slope} and is a curve of maximal slope on $\round{\Wcal,d_2}$, and $\omega$ is a curve of maximal slope on $(\Graphons,\delta_2)$.
\end{proof}

\subsubsection{Existence of gradient flow}\label{sec:Existence_with_FD}
We now prove the existence of a curve of maximal slope if $F$ satisfies reasonable assumptions. Moreover, as mentioned in the introduction, we show that the curve of maximal slope is the natural image of an absolutely continuous curve in $(\Wcal, d_2)$.
\begin{lemma}\label{lem:EVI_MM}
Let $f\colon\Wcal\to \rr\cup\Set*\infty$ be a $\lambda$-semiconvex invariant function for some $\lambda\in\rr$ such that the Fr\'echet-like derivative, $\phi(W) = D_\Wcal f(W)$, exists for all $W\in \effdom(f)$. Let $(W_t)_{t\in\interval{0,1}}\in \mathrm{AC}(\Wcal, d_2)$ be an absolutely continuous curve satisfying $W'_t=-\phi(W_t)\indicator{G_{W_t}}{} =- D_\Wcal f(W_t)\indicator{G_{W_t}}{}$ for a.e. $t\in\interval{0,1}$. Then, $(\squarebrack{W_t})_{t\in\interval{0,1}}$ is the unique \emph{minimizing movement curve} (MM) satisfying the following evolution variational inequality (EVI) 
\begin{align}
    \inv{2}\deriv{}{t}d_2^2\round{W_t,V} + \frac{\lambda}{2}\enorm{W_t-V}^2 + f(W_t) \leq f(V),\label{eq:EVI}
\end{align}
for every $V\in \effdom(f)$. 
\end{lemma}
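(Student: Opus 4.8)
The plan is to first verify that the kernel curve $(W_t)_{t\in\interval{0,1}}$ satisfies the evolution variational inequality~\eqref{eq:EVI} by a direct differentiation, and then to obtain uniqueness and the minimizing-movement property from the standard theory of EVI gradient flows of $\lambda$-convex functionals.

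The key preliminary step is a subgradient-type inequality: for every $W\in\effdom(f)$ with $\phi\coloneqq D_\Wcal f(W)$ and every $V\in\effdom(f)$,
\begin{equation}\label{eq:subgrad_plan}
    f(V) \ \ge\ f(W) + \inner{\phi\indicator{G_W}{},\,V-W} + \tfrac{\lambda}{2}\enorm{V-W}^2 .
\end{equation}
To prove this I would fix $V$, set $g(s)\coloneqq f\round{(1-s)W+sV}$ for $s\in\interval{0,1}$, and use that, since $\Wcal$ is a convex subset of the Hilbert space $L^2\big(\interval{0,1}^{(2)}\big)$, the segment $s\mapsto(1-s)W+sV$ is the unique constant-speed geodesic of $(\Wcal,d_2)$ joining $W$ and $V$; hence $\lambda$-semiconvexity of $f$ (Definition~\ref{def:semiconvexity}) applied along this geodesic yields $f(V)=g(1)\ge g(0)+g'(0^+)+\tfrac{\lambda}{2}\enorm{V-W}^2$, while the defining limit~\eqref{eq:frechet_limit_def} of the Fr\'echet-like derivative identifies $g'(0^+)=\inner{\phi,V-W}$. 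This gives~\eqref{eq:subgrad_plan} with $\phi$ replacing $\phi\indicator{G_W}{}$; to bring in the boundary-correction term I would note that, up to a Lebesgue-null set, $\interval{0,1}^{(2)}\setminus G_W=\setinline{W=1,\phi\le 0}\cup\setinline{W=-1,\phi\ge 0}$, and there $\phi\cdot(V-W)\ge 0$ since $-1\le V\le 1$, so $\inner{\phi\indicator{G_W}{},V-W}\le\inner{\phi,V-W}$, which only strengthens the bound.

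Next, since $(W_t)_{t\in\interval{0,1}}\in\mathrm{AC}(\Wcal,d_2)$ takes values in a Hilbert space, it enjoys the Radon--Nikod\'ym property exactly as in Corollary~\ref{cor:metric_derivative_eq_enorm_Wt'}: it is differentiable a.e.\ with $W_t-W_0=\int_0^t W'_s\diff s$ pointwise a.e.\ on $\interval{0,1}^{(2)}$. Consequently, for fixed $V\in\effdom(f)$ the scalar map $t\mapsto\tfrac12\enorm{W_t-V}^2$ is absolutely continuous with $\deriv{}{t}\tfrac12\enorm{W_t-V}^2=\inner{W'_t,W_t-V}$ for a.e.\ $t$. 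Substituting the hypothesis $W'_t=-\phi(W_t)\indicator{G_{W_t}}{}$ and then~\eqref{eq:subgrad_plan} with $W=W_t$ gives
\[
    \tfrac12\deriv{}{t}\enorm{W_t-V}^2 = \inner{\phi(W_t)\indicator{G_{W_t}}{},\,V-W_t} \ \le\ f(V)-f(W_t)-\tfrac{\lambda}{2}\enorm{W_t-V}^2 ,
\]
which rearranges to~\eqref{eq:EVI}; the same computation combined with Lemma~\ref{lem:ACtoGF} simultaneously records that $\round{\squarebrack{W_t}}_{t\in\interval{0,1}}$ is a curve of maximal slope.

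Finally, for uniqueness and the minimizing-movement claim I would invoke the general EVI theory: an EVI solution of a $\lambda$-convex functional is uniquely determined by its initial datum through the $\lambda$-contraction estimate (\cite[Theorem 4.0.4]{ambrosio2005gradient}), and, because $F$ is $\lambda$-geodesically convex, a minimizing movement of $\Phi_F$ — which exists whenever $F$ is $\delta_\cut$-lower semicontinuous, cf.\ Lemma~\ref{lem:F_assumptions} — must itself satisfy this EVI, hence must coincide with $\round{\squarebrack{W_t}}_{t\in\interval{0,1}}$; moreover $\delta_2$-convergence of the implicit Euler scheme forces $\delta_\cut$-convergence because $\delta_\cut\le\delta_2$. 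I expect the main obstacle to be the careful bookkeeping of the boundary indicator $\indicator{G_{W_t}}{}$ in~\eqref{eq:subgrad_plan} — verifying that the contribution over $\interval{0,1}^{(2)}\setminus G_W$ is nonnegative against every admissible competitor $V\in\Wcal$ — and, more technically, reconciling the Hilbert-space EVI written in terms of kernels and $\enorm{\cdot}$ with the minimizing-movement notion that is set up on $(\Graphons,\delta_2)$ relative to $\delta_\cut$-convergence.
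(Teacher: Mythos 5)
Your proposal is correct and follows essentially the same route as the paper's proof: both use the $\lambda$-semiconvexity of $f$ in $(\Wcal,d_2)$ together with the Fr\'echet-like derivative to obtain a $\lambda$-subgradient inequality, then use the sign argument on $\interval{0,1}^{(2)}\setminus G_{W}$ to replace $\phi$ by $\phi\indicator{G_W}{}$, differentiate $t\mapsto\tfrac12\enorm{W_t-V}^2$, and finally appeal to~\cite[Theorem 4.0.4]{ambrosio2005gradient} for uniqueness and the MM characterization. The only cosmetic difference is that the paper introduces the auxiliary convex function $g_U(V)\coloneqq f(V)-\tfrac{\lambda}{2}\enorm{U-V}^2$ and its convexity inequality, whereas you establish the equivalent $\lambda$-subgradient bound directly from the one-dimensional $\lambda$-convexity of $s\mapsto f\round{(1-s)W+sV}$ and the directional derivative $g'(0^+)=\inner{\phi,V-W}$ supplied by Definition~\ref{def:frechet_like_derivative}.
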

\begin{proof}
The curve $(W_t)_{t\in[0,1]}$ is a curve of maximal slope follows from Lemma~\ref{lem:ACtoGF}. We now show that it satisfies the EVI. For $t\in\rr_+$, let $\phi_t \coloneqq D_\Wcal f(W_t)$. Fix $U\in \Wcal$ and define the function $g_U\colon\Wcal\to\rr\cup\Set*\infty$ by $g_U(V)\coloneqq f(V) - \lambda \enorm{U-V}^2/2$, for $V\in\effdom(f)$. We first observe that $D_\Wcal g_{W_t}(W_t)=\phi_t$. To see this, note that 
\begin{align}
    \lim_{s\to t} \frac{f(W_s) - f(W_t) - \inner{\phi_t,W_s-W_t}}{\enorm{W_s-W_t}} &= 0\nonumber\\
    \implies \lim_{s\to t} \frac{g_{W_t}(W_s) - g_{W_t}(W_t) - \inner{\phi_t,W_s-W_t}}{\enorm{W_s-W_t}} &= 0.\label{eq:phi_t_eq_psi_t}
\end{align}
The conclusion, that is $D_\Wcal g_{W_t}(W_t)=\phi_t$, now follows from the uniqueness of Fr\'echet-like derivatives (Lemma~\ref{lem:frechet_consistency}). Since $f$ is $\lambda$-semiconvex, $g_U$ is convex, i.e.,
\begin{align}
   g_{W_t}(V) &\geq g_{W_t}(W_t) + \inner{\phi_t,V-W_t}\ ,\quad V\in\effdom(f).\label{eq:g_convexity}
\end{align}
From equation~\eqref{eq:phiVW_leq_phiVtilW} and using the fact that $W'_t=\phi_t$, we obtain 
\begin{align}
    \inner{\phi_t,V-W_t} \leq \inner{\phi_t\indicator{G_{W_t}}{},V-W_t} &= \inner{-\deriv{}{t}W_t,V-W_t}\nonumber\\
    &=\inv{2}\deriv{}{t}\enorm{W_t-V}^2 = \inv{2}\deriv{}{t}d_2^2\round{W_t,V}, \label{eq:ddt_eq_innerprod}
\end{align}
where the second equality follows from the reflexivity of $L^2([0, 1]^2)$. Plugging equations~\eqref{eq:phi_t_eq_psi_t} and~\eqref{eq:ddt_eq_innerprod} in equation~\eqref{eq:g_convexity} and rearranging, we get
\begin{align}
    \inv{2}\deriv{}{t}d_2^2\round{W_t,V} + \frac{\lambda}{2}\enorm{W_t-V}^2 + f(W_t) &\leq f(V),\label{eq:EVI_lambda}
\end{align}
for all $V\in\effdom(f)$. Using equation~\eqref{eq:EVI_lambda} it follows from~\cite[Theorem 4.0.4]{ambrosio2005gradient} that the curve $\round{\squarebrack{W_t}}_{t\in\interval{0,1}}$ is the unique curve in $\mathrm{MM}_{\delta_2,\delta_\cut}\round{\Phi_F,\squarebrack{W_0}}$.
\end{proof}

\begin{theorem}[Existence of curve of maximal slope-II]\label{thm:Existence_with_FrehetDerivative}
    Let $F\colon\Graphons\to \rr\cup\Set*\infty$ be a real valued function such that the Fr\'echet-like derivative $D_{\Graphons}F([W])$ exists for all $[W]\in \effdom(F)$. Let $f\colon\Wcal\to \rr\cup\Set*\infty$ be its invariant extension. For $W_0\in [W_0]\in \effdom(F)$ and $t\geq 0$ define 
    \[
        W_t \coloneqq W_0-\int_0^{t}\phi(W_s)\indicator{G_{W_s}}{}\diff s, \qquad t\in\rr_+,
    \]
    where the above integral is pointwise. If $f$ is $\lambda$-semiconvex w.r.t. $d_2$, then the curve $t\mapsto \omega_t=[W_t]$ is a curve maximal slope for $F$ starting at $[W_0]\in \effdom(F)$. 
\end{theorem}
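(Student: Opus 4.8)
The strategy is to reduce Theorem~\ref{thm:Existence_with_FrehetDerivative} to Lemma~\ref{lem:EVI_MM} by first checking that the pointwise-defined curve $t\mapsto W_t$ is well-posed and genuinely lives in $\mathrm{AC}(\Wcal,d_2)$ with the correct velocity field, and then invoking Lemma~\ref{lem:ACtoGF} (or Lemma~\ref{lem:EVI_MM} directly) to conclude it is a curve of maximal slope, whose projection to $\Graphons$ is the desired curve for $F$. So the first step is existence and uniqueness of the solution to
\[
    W_t = W_0 - \int_0^t \phi(W_s)\indicator{G_{W_s}}{}\diff s,
\]
interpreted pointwise a.e. on $[0,1]^{(2)}$. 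Since $\phi = D_\Wcal f$ takes values in $L^\infty([0,1]^{(2)})$ and $G_{W_s}$ is a measurable ``boundary correction'' set keeping the trajectory in $[-1,1]$, the natural approach is a Picard--Lindel\"of / Carath\'eodory argument: one shows the vector field $W\mapsto -\phi(W)\indicator{G_W}{}$ is (locally) Lipschitz in an appropriate sense on $\effdom(f)$, or at least that the $\lambda$-semiconvexity of $f$ forces enough monotonicity on $\phi$ to give uniqueness via a Gr\"onwall estimate on $\enorm{W_t - \widetilde W_t}^2$. I would invoke \cite{ambrosio2005gradient} (Theorem~4.0.4 and the generation theory for EVI flows of $\lambda$-convex functionals on Hilbert spaces) so that existence/uniqueness of the $\Wcal$-valued curve comes for free from the semiconvexity hypothesis, together with the identification of the generator's velocity with the minimal-norm element of the subdifferential, which here is exactly $-\phi(W)\indicator{G_W}{}$ by Lemma~\ref{lem:upper_gradient_frechet} (the projection onto the admissible directions $G_V$).

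Next I would verify the regularity claims needed to feed the curve into the earlier lemmas: (i) $\abs{W_t}\le 1$ a.e. for all $t$, which follows because the indicator $\indicator{G_{W_s}}{}$ vanishes precisely on the part of the domain where moving in the direction $-\phi$ would push a coordinate out of $[-1,1]$ — so $W_\cdot(x,y)$ is a scalar ODE confined to $[-1,1]$ for a.e.\ $(x,y)$; (ii) $(W_t)_t \in \mathrm{AC}(\Wcal,d_2)$ with $\enorm{W_t' } = \enorm{\phi(W_t)\indicator{G_{W_t}}{}} = \eta_F([W_t]) = \abs{\pdiff F}([W_t])$ a.e., using boundedness of $\phi$ in $L^\infty$ hence in $L^2$, and Lemma~\ref{lem:upper_gradient_frechet} for the last equality; (iii) the velocity $W_t'$ has the form $-\eta_F([W_t]) N_t$ with $\enorm{N_t}=1$ and $\inner{\phi_t, N_t} = \eta_F([W_t])$, which is immediate by setting $N_t \coloneqq \phi(W_t)\indicator{G_{W_t}}{}/\enorm{\phi(W_t)\indicator{G_{W_t}}{}}$ (on the set where the denominator is nonzero; where it vanishes the curve is stationary in that region and there is nothing to check). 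With (i)--(iii) in hand, Lemma~\ref{lem:ACtoGF} applies verbatim and shows $\omega = ([W_t])_t$ is a curve of maximal slope on $(\Graphons,\delta_2)$; alternatively Lemma~\ref{lem:EVI_MM} gives the stronger EVI characterization and identifies $\omega$ as the unique minimizing-movement curve, which also reconnects with Theorem~\ref{thm:GF_convergence}.

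Finally I would note that ``$F$ Fr\'echet differentiable'' is exactly the hypothesis ``$D_\Wcal f(W)$ exists for all $W\in\effdom(f)$'' (by Lemma~\ref{lem:frechet_consistency}, this descends to a well-defined $D_\Graphons F$), so all the inputs to Lemma~\ref{lem:EVI_MM} are present: $f$ is $\lambda$-semiconvex w.r.t.\ $d_2$ by hypothesis, it is invariant by construction, and the constructed curve satisfies $W_t' = -\phi(W_t)\indicator{G_{W_t}}{}$ by definition. Hence the theorem follows by directly quoting Lemma~\ref{lem:EVI_MM}, once the well-posedness of the integral equation is established. \textbf{The main obstacle} I anticipate is precisely that well-posedness step: the vector field $W\mapsto \phi(W)\indicator{G_W}{}$ is not obviously Lipschitz because the set $G_W$ depends discontinuously on $W$ (it involves strict inequalities $\abs{W}<1$ and sign conditions on $\phi$ at the boundary), so the clean ODE theory does not apply off the shelf. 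The resolution is to recognize the curve as the EVI gradient flow of the $\lambda$-convex, lower semicontinuous functional $f$ on the Hilbert space $L^2([0,1]^{(2)})$ restricted to the closed convex set $\Wcal$ (the $\indicator{G_W}{}$ factor being exactly the effect of the normal cone of $\Wcal$), invoke the Brezis--Komura / Ambrosio--Gigli--Savar\'e generation theorem for existence and uniqueness, and then check a posteriori that the abstract flow's velocity coincides pointwise with $-\phi(W_t)\indicator{G_{W_t}}{}$ via the subdifferential characterization in Lemma~\ref{lem:upper_gradient_frechet}; uniqueness of the representative curve then follows from the $L^2$-contraction property of EVI flows plus Lemma~\ref{lem:representation_graphons}.
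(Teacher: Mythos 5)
Your proof takes essentially the same route as the paper: define the $\Wcal$-valued curve, feed it into Lemma~\ref{lem:EVI_MM} (which gives minimizing movement and, via convergence of discrete solutions, confinement to $\Wcal$), then use Lemma~\ref{lem:upper_gradient_frechet} to identify $\enorm{W_t'}=\eta_F([W_t])=\abs{\pdiff F}([W_t])$, and finish with Lemma~\ref{lem:ACtoGF}.

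The genuine difference is that you flag and attempt to close the well-posedness gap that the paper leaves implicit. The paper says ``by construction, $\round{W_t}_{t}\in\mathrm{AC}(\Wcal,d_2)$ and $W_t' = -\phi(W_t)\indicator{G_{W_t}}{}$'' without arguing that a solution to the integral equation exists and is unique — and as you correctly observe, the vector field $W\mapsto -\phi(W)\indicator{G_W}{}$ is not Lipschitz because $G_W$ depends on $W$ through strict inequalities and sign conditions. Your resolution, recognizing the flow as the EVI gradient flow of a $\lambda$-convex l.s.c.\ functional on the Hilbert space $L^2\big([0,1]^{(2)}\big)$ restricted to the closed convex set $\Wcal$ (Brezis--Komura / AGS generation theory), is exactly what makes the paper's ``by construction'' legitimate, and it automatically gives that the trajectory stays in $\Wcal$. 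Note, however, that your heuristic ``(i): $W_\cdot(x,y)$ is a scalar ODE confined to $[-1,1]$'' is not literally true — $\phi(W_s)(x,y)$ depends on the entire kernel $W_s$, not just on $W_s(x,y)$, so it is a coupled (infinite-dimensional) system, not a family of independent scalar ODEs. The indicator does kill motion that would exit $[-1,1]$, but the cleaner argument for confinement is either the one the paper actually uses (minimizing movement property $\Rightarrow$ $\delta_\cut$-limit of discrete solutions in $\Wcal_k$, and $\Wcal$ is $\cutnorm{}$-closed) or, as in your fallback, the convex-constraint EVI theory. Once well-posedness is in hand, your steps (ii)--(iii) and the invocation of Lemma~\ref{lem:ACtoGF} are exactly the paper's closing argument.
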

\begin{proof}
    Fix $[W_0]\in \effdom(F)$ and define 
    \[
        W_t \coloneqq W_0-\int_0^{t}\phi(W_s)\indicator{G_{W_s}}{}\diff s\, ,\quad t\in(0,1],
    \]
    where the above integral is a pointwise integral, i.e., for a.e. $(x,y)\in[0,1]^2$,
    \[
        W_t(x,y) \coloneqq W_0(x,y)-\int_0^{t}\phi(W_s)(x,y)\indicator{G_{W_s}}{(x,y)}\diff s\, ,\quad t\in(0,1].
    \]
    By construction, we have $\round{W_t}_{t\in\interval{0,1}}\in\mathrm{AC}(\Wcal, d_2)$ and $W_t'=-\phi(W_t)\indicator{G_{W_t}}{}$ for all $t\in\interval{0,1}$. It follows from Lemma~\ref{lem:EVI_MM} that $\round{W_t}_{t\in\interval{0,1}}$ is a minimizing movement. It follows from the definition of minimizing movements (see Section~\ref{sec:implicit_euler} and~\cite[Definition 2.0.6]{ambrosio2005gradient}) that there exists a sequence of discrete solutions in $\Graphons$ that converges to $\round{\squarebrack{W_t}}_{t\in\interval{0,1}}$ in $\delta_\cut$. Since $\Wcal$ is closed in $ \cutnorm{}$,  $W_t\in \Wcal$ for all $t\in\interval{0,1}$.
    
    Set $\omega_t=[W_t]$ for $t\in [0, 1]$. Then $\omega\in \mathrm{AC}(\Graphons, \delta_2)$. From Lemma~\ref{lem:upper_gradient_frechet}, we know that for any $t\in\interval{0,1}$, $\eta_F([W_t])=\norm{2}{\phi(W_t)\indicator{G_{W_t}}{}}$ and therefore we have $W_t'=-\eta_{f}(W_t)N_t$ where  $N_t\coloneqq \phi(W_t)\indicator{G_{W_t}}{}/\enorm{\phi(W_t)\indicator{G_{W_t}}{}}$. It follows from Lemma~\ref{lem:ACtoGF} that $\omega$ is a curve of maximal slope on $(\Graphons,\delta_2)$.
\end{proof}
\begin{remark}\label{rem:velocityofgradientflow}
    An important consequence of the above Theorem is that if $\omega$ is a gradient flow of $F$ then there exists an absolutely continuous curve $(W_t)_{t\in\interval{0,T}}\in \mathrm{AC}(\Wcal,d_2)$ such that $[W_t]=\omega_t$, $\abs{\omega'}(t)=\enorm{D_{\Wcal}f(W_t)\indicator{G_{W_t}}{}}$ and $W'_t=-D_{\Wcal}f(W_t)\indicator{G_{W_t}}{}$, for each $t\in (0,T]$.
\end{remark}

\begin{remark}\label{rem:exp_convergence}
    If $F$ is $\delta_\cut$-lower semicontinuous, $\lambda$-geodesically semiconvex for $\lambda\in\rr_+$, and bounded from below, then one can say more about the convergence rate of a gradient flow to a minimizer of $F$. When $\lambda>0$, let $\omega^*$ be the unique minimizer of $F$. Then following~\cite[Remark 4.0.5, part (d)]{ambrosio2005gradient}, a gradient flow $\omega$ of $F$ on $\Graphons$ starting at $\omega_0\in\Graphons$ satisfies
    \[
        \delta_2\round{\omega_t,\omega^*} \leq \eu^{-\lambda t}\delta_2\round{\omega_0,\omega^*}, \qquad t\in\rr_+.
    \]
    In the limiting case when $\lambda = 0$ the exponential decay does not occur, in general, but some weaker results on the asymptotic behavior of $\omega$ hold. Following~\cite[Corollary 4.0.6]{ambrosio2005gradient}, $\omega$ satisfies
    \[
        F(\omega_t) - F(\omega_\infty) \leq \frac{\delta_2^2(\omega_0,\omega_\infty)}{2t},\qquad t\in\rr_+,
    \]
    for some minimum point $\omega_\infty$ of $F$, such that the map $t\mapsto \delta_2(\omega_t,\omega_\infty)$ is  non-increasing. Moreover, $\lim_{t\to\infty}\delta_2(\omega_t,\omega_\infty) = 0$.
\end{remark}

\subsection{Finite dimensional Fr\'echet-like derivatives and upper gradients}\label{sec:finite_dim_frechet}
Recall the partition $Q_k\coloneqq\Set*{Q_{k,i}}_{i \in [k]}$ defined for any $k\in\Natural$ in Section~\ref{sec:topologies_on_graphons}. Given an invariant function $f\colon\Wcal\to\rr\cup\Set*\infty$, we can restrict its domain to kernels in $\Wcal_k$ and still consider the Fr\'echet-like derivative $D_{\Wcal_k}f\colon \Wcal_k\cap \effdom(f)\to L^\infty_k\big(\interval{0,1}^{(2)}\big)$.

There are two equivalent ways of doing this. First, suppose the Fr\'echet-like derivative of $f$ at $V$ is given by $D_\Wcal f(V) = \phi$. Then define $D_{\Wcal_k}f(V)=\phi_k$ by conditional expectations as $\phi_k \coloneqq \E{\phi \given \Fcal_k}$,
where $\Fcal_k \coloneqq \sigma\round{Q_k\times Q_k}$. 
The object $\phi_k$ is referred to as a `quotient' obtained by a `stepping' of $\phi$ in~\cite[Section 3.3]{borgs2008convergent} and~\cite[Section 9.2.1]{lovasz2012large} respectively. Since $\phi = D_\Wcal f(V)$, by the \textit{Tower Property} of conditional expectations we obtain that when $W,V\in\Wcal_k$,
\begin{align}
    &\inner{\phi_k, W} - \inner{\phi_k, V} = \inner{\phi, W} - \inner{\phi, V},\nonumber\\
    \implies &\lim_{\substack{W\in\Wcal_k,\\\enorm{W-V}\to 0}}\frac{f(W) - f(V) - \round{\inner{\phi_k, W} - \inner{\phi_k, V}}}{\norm{2}{W-V}} = 0.\label{eq:D_Wk_limit_verify}
\end{align}
The second method of defining $\phi_k$ is to relate it to the Euclidean gradient over $k\times k$ symmetric matrices. This is done in Lemma~\ref{lem:phi_equal_partial_derivatives} below.

In any case, we can define $\eta_{F,k}\colon\Graphons_k\cap \effdom(F)\to\rr_+$ for the function $F\colon\Graphons\to\rr\cup\Set*\infty$ as follows. If $V\in\squarebrack{V}\in\Graphons_k\cap \effdom(F)$, then
\begin{align}
    \eta_{F,k}\round{\squarebrack{V}} &\coloneqq \sup_{W\in\Wcal_k}\frac{\round{\inner{\phi_k, V} - \inner{\phi_k, W} }^+ }{\norm{2}{V-W}}.\label{eq:norm_frechet_def_k}
\end{align}
We can also define the local slope $\abs{\pdiff_k F}$ restricted to $\Graphons_k$ as
\begin{align}
    \abs{\pdiff_k F}\round{\squarebrack{V}} &\coloneqq \limsup_{\squarebrack{W}\in\Graphons_k,\,  \delta_{2}\round{\squarebrack{W},\squarebrack{V}} \to 0} \frac{\round{F\round{\squarebrack{V}} - F\round{\squarebrack{W}}}^+}{\delta_2\round{\squarebrack{W},\squarebrack{V}}},
\end{align}
for $\squarebrack{V}\in\Graphons_k\cap \effdom(F)$. Then, by a similar argument as shown in the proof of Lemma~\ref{lem:upper_gradient_frechet}, we have the following corollary.
\begin{corollary}\label{cor:upper_gradient_frechet}
    \sloppy Let $F\colon\Graphons\to\rr\cup\Set*\infty$ be a function and $f\colon\Wcal\to\rr\cup\Set*\infty$ be its invariant extension. Assume that for $\squarebrack{V}\in\Graphons_k\cap \effdom(F)$ the Fr\'echet-like derivative $D_\Wcal f(V)$ exists for all $V\in\squarebrack{V}$. Then the local slope (Definition~\ref{def:local_slope}) of $F$ at $\squarebrack{V}$ satisfies $\abs{\pdiff_k F}\round{\squarebrack{V}} = \eta_{F,k}\round{\squarebrack{V}}$.
\end{corollary}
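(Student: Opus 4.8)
The plan is to transcribe the proof of Lemma~\ref{lem:upper_gradient_frechet} essentially verbatim into the block setting, making the substitutions $\Wcal\rightsquigarrow\Wcal_k$, $\Graphons\rightsquigarrow\Graphons_k$, $\Ical\rightsquigarrow\Ical_k$, $\phi=D_\Wcal f(V)\rightsquigarrow\phi_k=D_{\Wcal_k}f(V)=\E{\phi\given\Fcal_k}$, and replacing the defining limit~\eqref{eq:frechet_limit_def} by its restricted counterpart~\eqref{eq:D_Wk_limit_verify}. First I would check that $\eta_{F,k}$ in~\eqref{eq:norm_frechet_def_k} is well defined on $\Graphons_k$: if $V'=V^{\tilde\pi}$ with $\tilde\pi\in\Ical_k$ then $D_\Wcal f(V')=\phi^{\tilde\pi}$ by Lemma~\ref{lem:frechet_consistency}, and since $\tilde\pi$ merely permutes the blocks of $Q_k$ the $\sigma$-algebra $\Fcal_k$ is $\tilde\pi$-invariant, so $\E{\phi^{\tilde\pi}\given\Fcal_k}=\round{\E{\phi\given\Fcal_k}}^{\tilde\pi}=\phi_k^{\tilde\pi}$; substituting $W\mapsto W^{\tilde\pi}$ (a bijection of $\Wcal_k$) in the supremum~\eqref{eq:norm_frechet_def_k} then shows its value is unchanged. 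Since any two representatives of a block graphon that both lie in $\Wcal_k$ differ by an element of $\Ical_k$ up to a null set (cf.\ $\Graphons_k=\Wcal_k/{\cong}$ and Definition~\ref{def:K_Mk}), $\eta_{F,k}$ descends to $\Graphons_k$.

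For the upper bound $\abs{\pdiff_k F}\round{\squarebrack{V}}\le\eta_{F,k}\round{\squarebrack{V}}$, I would fix $\squarebrack{V}\in\Graphons_k\cap\effdom(F)$ with representative $V\in\Wcal_k$ and put $\phi_k=D_{\Wcal_k}f(V)$. Given $\squarebrack{W}\in\Graphons_k$ with $\delta_2\round{\squarebrack{V},\squarebrack{W}}$ small, I use the fact -- the same one used to pass from~\eqref{eq:matrix_implicit_Euler} to~\eqref{eq:graphon_implicit_Euler_scaled} -- that $\delta_2$ between block graphons of height $k$ is attained by a block relabelling, i.e.\ $\delta_2\round{\squarebrack{V},\squarebrack{W}}=\min_{\tilde\pi\in\Ical_k}\enorm{V-W^{\tilde\pi}}$; let $\bar W\coloneqq W^{\tilde\pi}\in\Wcal_k\cap\squarebrack{W}$ attain it, so that $\enorm{\bar W-V}=\delta_2\round{\squarebrack{V},\squarebrack{W}}\to0$. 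Now both $V,\bar W\in\Wcal_k$, so~\eqref{eq:D_Wk_limit_verify} applies: for any $\varepsilon>0$, once $\enorm{\bar W-V}$ is small enough, $f(\bar W)-f(V)\ge\inner{\phi_k,\bar W-V}-\varepsilon\enorm{\bar W-V}$, and since $f(\bar W)=F\round{\squarebrack{W}}$ and $f(V)=F\round{\squarebrack{V}}$ this yields
\[
  \frac{\round{F\round{\squarebrack{V}}-F\round{\squarebrack{W}}}^+}{\delta_2\round{\squarebrack{V},\squarebrack{W}}}\le\frac{\round{\inner{\phi_k,V}-\inner{\phi_k,\bar W}}^+}{\enorm{V-\bar W}}+\varepsilon\le\eta_{F,k}\round{\squarebrack{V}}+\varepsilon .
\]
Taking the limsup over $\squarebrack{W}$ and then $\varepsilon\downarrow0$ gives the bound; equivalently one can run the small‑slack argument of Lemma~\ref{lem:upper_gradient_frechet}, part~(1), line for line.

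For the reverse bound $\abs{\pdiff_k F}\round{\squarebrack{V}}\ge\eta_{F,k}\round{\squarebrack{V}}$, when $\eta_{F,k}\round{\squarebrack{V}}>0$ I would pick, for $\varepsilon\in\round{0,\eta_{F,k}\round{\squarebrack{V}}}$, a kernel $W\in\Wcal_k$ with $\round{\inner{\phi_k,V}-\inner{\phi_k,W}}/\enorm{V-W}>\eta_{F,k}\round{\squarebrack{V}}-\varepsilon>0$, and follow the segment construction of Lemma~\ref{lem:upper_gradient_frechet}, part~(2): set $W_t\coloneqq(1-t)V+tW$, which stays in the convex set $\Wcal_k$, so $\squarebrack{W_t}\in\Graphons_k$ and $\delta_2\round{\squarebrack{V},\squarebrack{W_t}}\le\enorm{W_t-V}=t\enorm{W-V}\to0$. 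Using~\eqref{eq:D_Wk_limit_verify} along $t\to0$ one gets
\[
  \frac{\round{F\round{\squarebrack{V}}-F\round{\squarebrack{W_t}}}^+}{\delta_2\round{\squarebrack{V},\squarebrack{W_t}}}\ge\frac{\round{f(V)-f(W_t)}^+}{\enorm{V-W_t}},
\]
whose right-hand side converges to $\round{\inner{\phi_k,V}-\inner{\phi_k,W}}/\enorm{V-W}>\eta_{F,k}\round{\squarebrack{V}}-\varepsilon$. Hence $\abs{\pdiff_k F}\round{\squarebrack{V}}\ge\eta_{F,k}\round{\squarebrack{V}}-\varepsilon$, and letting $\varepsilon\downarrow0$ (the case $\eta_{F,k}\round{\squarebrack{V}}=0$ being trivial) finishes; combining the two bounds proves $\abs{\pdiff_k F}\round{\squarebrack{V}}=\eta_{F,k}\round{\squarebrack{V}}$.

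The one genuinely delicate point, I expect, is the reduction in the upper bound to a representative $\bar W\in\Wcal_k$ that is \emph{simultaneously} $\delta_2$-optimal and block-valued: this is precisely what makes the restricted limit~\eqref{eq:D_Wk_limit_verify} (rather than the ambient~\eqref{eq:frechet_limit_def}) usable, and it rests on $\delta_2$ restricted to $\Graphons_k$ being realised over $\Ical_k$, the fact already invoked in the derivation of~\eqref{eq:graphon_implicit_Euler_scaled}. Everything else -- the $o(\cdot)$ bookkeeping in the Fr\'echet estimate, the handling of the positive part, the convexity of $\Wcal_k$ along the segment, and the descent to $\Graphons_k$ -- is a line-by-line repetition of the proof of Lemma~\ref{lem:upper_gradient_frechet} and introduces no new idea.
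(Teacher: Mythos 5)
Your upper-bound step rests on the claim that for two block graphons of the same height $k$ the invariant $L^2$ distance is attained by a block relabelling, i.e.\ $\delta_2\round{\squarebrack{V},\squarebrack{W}}=\min_{\tilde\pi\in\Ical_k}\enorm{V-W^{\tilde\pi}}$ for $V,W\in\Wcal_k$. This is false, and it is the one place where the transcription of Lemma~\ref{lem:upper_gradient_frechet} genuinely breaks. Take $k=2$, $V=K\Matrix{1&0\\0&1}$ and $W=K\Matrix{0&1\\1&0}$. Both matrices are invariant under the nontrivial permutation, so $\min_{\tilde\pi\in\Ical_2}\enorm{V-W^{\tilde\pi}}=\enorm{V-W}=1$. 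But if $\varphi\in\Ical$ is the interleaving map sending $[0,1/4]\mapsto[0,1/4]$, $(1/4,1/2]\mapsto(1/2,3/4]$, $(1/2,3/4]\mapsto(1/4,1/2]$, $(3/4,1]\mapsto(3/4,1]$, a direct computation gives $\enorm{V-W^\varphi}=1/\sqrt{2}$, so $\delta_2\round{\squarebrack{V},\squarebrack{W}}\le 1/\sqrt{2}<1$. (In general, $\inner{V,W^\varphi}=k^{-2}\operatorname{tr}(VPWP^T)$ with $P$ the doubly stochastic matrix of $\varphi$, and this quadratic assignment objective need not attain its maximum at a permutation of the Birkhoff polytope.) Consequently, given $\squarebrack{W}\in\Graphons_k$, there may be no $\bar W\in\Wcal_k\cap\squarebrack{W}$ with $\enorm{\bar W-V}$ comparable to $\delta_2\round{\squarebrack{V},\squarebrack{W}}$, and the inequality you derive for the difference quotient does not follow as written. (Your lower bound via the segment $W_t=(1-t)V+tW$ is fine, since there you only need $\delta_2\le\enorm{\cdot}$.)

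The conclusion is nevertheless true, but the correct route bypasses this optimization over $\Ical_k$ altogether. Since $V\in\Wcal_k$ is constant on each $Q_{k,i}\times Q_{k,j}$, every measure-preserving $\varphi$ that maps each $Q_{k,i}$ onto itself satisfies $V^\varphi=V$; Lemma~\ref{lem:frechet_consistency} then forces $\phi^\varphi=\phi$ for all such $\varphi$, and (by the usual ergodicity of the action of the measure-preserving group on each rectangle $Q_{k,i}\times Q_{k,j}$) this compels $\phi=D_\Wcal f(V)$ to be constant on each $Q_{k,i}\times Q_{k,j}$, i.e.\ $\phi\in L^\infty_k$ and hence $\phi_k=\E{\phi\given\Fcal_k}=\phi$. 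With $\phi=\phi_k$, for any $W\in\Wcal$ write $\bar W=\E{W\given\Fcal_k}\in\Wcal_k$: then $\inner{\phi,V-W}=\inner{\phi_k,V-\bar W}$ while $\enorm{V-W}\ge\enorm{V-\bar W}$, which gives $\eta_F\round{\squarebrack{V}}\le\eta_{F,k}\round{\squarebrack{V}}$; the reverse inequality is trivial, so $\eta_F\round{\squarebrack{V}}=\eta_{F,k}\round{\squarebrack{V}}$. Now $\abs{\pdiff_k F}\round{\squarebrack{V}}\le\abs{\pdiff F}\round{\squarebrack{V}}$ because the limsup is over the smaller set $\Graphons_k$, and Lemma~\ref{lem:upper_gradient_frechet} gives $\abs{\pdiff F}\round{\squarebrack{V}}=\eta_F\round{\squarebrack{V}}=\eta_{F,k}\round{\squarebrack{V}}$; combined with your (correct) lower bound $\abs{\pdiff_k F}\round{\squarebrack{V}}\ge\eta_{F,k}\round{\squarebrack{V}}$ this yields the equality. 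So the heart of the corollary is the automatic block structure of the Fr\'echet-like derivative at a block kernel, not a block-permutation realisation of $\delta_2$.
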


\subsection{Convergence of finite dimensional gradient flows}\label{sec:convergence}
In Section~\ref{sec:intro} we discussed that implicit Euler iteration on $\Graphons_k$ for any $k\in\Natural$ can be viewed as the time scaling of the implicit Euler method on the Euclidean space of $k\times k$ symmetric matrices. The following lemma complements that discussion by saying that the gradient flow on $\Wcal_k$ can be obtained from the Euclidean gradient flow on the space of $k\times k$ symmetric matrices.

To set the stage, let $f\colon\Wcal\to\rr\cup\Set*\infty$ be an invariant function. Consider its restriction to $\Wcal_k$, viewed as the space of $k\times k$ symmetric matrices: $f_k \coloneqq f\circ K$. This is a function on a closed convex subset of an Euclidean space. Suppose the function $f_k$ is $C^1$ up to the boundary then we show that the Euclidean gradient and the Fr\'echet-like derivative are equal up to a scaling.

\begin{lemma}\label{lem:phi_equal_partial_derivatives}
    \sloppy Let $k\in\Natural$. Let $f\colon\Wcal\to \mathbb{R}\cup \{+\infty\}$ be an invariant function that is Fr\'echet differentiable according to Definition~\ref{def:frechet_like_derivative}, that is, $D_{\Wcal_k}f(V)$ exists for every $V\in\Wcal_k$. If $f_k\coloneqq f\circ K$ is differentiable up to the boundary of $\Mcal_k$, then
    \[
        k^2\round{\nabla f_k\circ M_k} (V) = \round{M_k\circ D_{\Wcal_k}f}\round{V},\qquad V\in \Wcal_k,
    \]
    where $\round{\nabla f_k}_{i,j} \coloneqq \pdiff_{i,j} f_k$ for all $(i,j)\in\squarebrack{k}^{(2)}$, and $M_k$ is as defined in Definition~\ref{def:K_Mk}.
\end{lemma}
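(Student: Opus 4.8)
The plan is to transport the defining limit of the Fr\'echet-like derivative on $\Wcal_k$ through the affine bijection $K\colon\Mcal_k\to\Wcal_k$ (with inverse $M_k$), which simply rescales the $L^2$ structure by the Lebesgue measure $1/k^2$ of the cells $Q_{k,i}\times Q_{k,j}$, and then to identify the resulting quantity with the ordinary gradient of $f_k$ on the cube $\Mcal_k$ by uniqueness of the derivative.

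First I would record two elementary scaling identities. Fix $V\in\Wcal_k$ and set $A\coloneqq M_k(V)$, $\Phi\coloneqq M_k\big(D_{\Wcal_k}f(V)\big)$; for any $W\in\Wcal_k$ with $B\coloneqq M_k(W)$, since all these functions are constant on each cell $Q_{k,i}\times Q_{k,j}$ of measure $1/k^2$, we get $\inner{D_{\Wcal_k}f(V),\,W-V}=\tfrac1{k^2}\inner{\Phi,\,B-A}_{\mathrm F}$ and $\enorm{W-V}=\tfrac1k\normF{B-A}$, where $\inner{\cdot,\cdot}_{\mathrm F}$ and $\normF{\cdot}$ are the Frobenius inner product and norm on symmetric matrices. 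Also $f(W)-f(V)=f_k(B)-f_k(A)$ by definition of $f_k=f\circ K$, and $W\to V$ in $d_2$ is equivalent to $B\to A$ in $\Mcal_k$. Substituting into the limit \eqref{eq:frechet_limit_def} defining $D_{\Wcal_k}f(V)$ (with $W$ restricted to $\Wcal_k$) and noting the denominator changes only by the positive constant $k$, we obtain
\[
    f_k(B)-f_k(A)=\inner{\tfrac1{k^2}\Phi,\ B-A}_{\mathrm F}+o\big(\normF{B-A}\big)\qquad\text{as }B\to A\text{ in }\Mcal_k,
\]
i.e.\ $\tfrac1{k^2}\Phi$ is \emph{a} gradient of $f_k$ at $A$ relative to the convex set $\Mcal_k$.

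Next I would invoke the hypothesis that $f_k$ is differentiable up to the boundary of $\Mcal_k$, which provides a genuine gradient $\nabla f_k(A)$ with $f_k(B)-f_k(A)=\inner{\nabla f_k(A),\,B-A}_{\mathrm F}+o(\normF{B-A})$ on $\Mcal_k$. Subtracting the two expansions gives $\inner{\nabla f_k(A)-\tfrac1{k^2}\Phi,\ D}_{\mathrm F}=o(\normF{D})$ along every feasible increment $D=B-A$. Since $\Mcal_k=[-1,1]^{\squarebrack{k}^{(2)}}$ is a full-dimensional box, the cone of feasible directions at any $A\in\Mcal_k$ (interior or boundary) has nonempty interior and hence linearly spans the space of symmetric matrices; scaling $D\mapsto tD$ with $t\downarrow0$ forces $\inner{\nabla f_k(A)-\tfrac1{k^2}\Phi,\ D}_{\mathrm F}=0$ on that cone and then, by linearity, everywhere. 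Therefore $\nabla f_k(A)=\tfrac1{k^2}\Phi$, that is $k^2(\nabla f_k\circ M_k)(V)=(M_k\circ D_{\Wcal_k}f)(V)$.

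I do not expect a genuine obstacle here — the lemma is essentially a change-of-variables bookkeeping statement — but the one spot needing care is the indexing convention for $\nabla f_k$ on symmetric matrices. One must read $(\nabla f_k)_{i,j}=\pdiff_{i,j}f_k$ so that $\inner{\nabla f_k(A),\,C}_{\mathrm F}$ reproduces the directional derivatives of $f_k$ (equivalently, $\nabla f_k$ is the Frobenius Riesz representative of $df_k$ on the symmetric subspace): then the factors of $2$ on off-diagonal entries implicit in the Frobenius pairing cancel exactly against those in $\pdiff_{i,j}$, and the identity comes out in the clean stated form. The boundary case requires nothing beyond the remark that a linear functional which is $o(\normF{D})$ along every feasible direction of a solid box must be identically zero.
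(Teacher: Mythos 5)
Your proof is correct and takes essentially the same route as the paper's: both rely on transporting the Fr\'echet expansion through the affine bijection $K$/$M_k$ (which rescales $L^2$ by $1/k$) and then invoking uniqueness of the derivative, the only cosmetic difference being that the paper pushes $\nabla f_k$ up to $\Wcal_k$ and cites Lemma~\ref{lem:frechet_consistency} for uniqueness, whereas you pull $D_{\Wcal_k}f$ down to $\Mcal_k$ and spell out the feasible-direction argument on the box directly. Your closing remark about reading $\nabla f_k$ as the Frobenius Riesz representative of $df_k$ on symmetric matrices usefully makes explicit an indexing convention that the paper's one-paragraph proof leaves implicit.
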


\begin{proof} Since $f_k$ is assumed to be differentiable on a finite dimensional Euclidean space, $\nabla f_k$ is its Fr\'echet derivative as well. By composing with $M_k$, which scales distances by a factor, we get that $k^2\round{\nabla f_k\circ M_k}$ is a Fr\'echet-like derivative on $\Wcal_k$. We have already shown in equation~\eqref{eq:D_Wk_limit_verify} that $D_{\Wcal_k}f$ is also a Fr\'echet-like derivative on $\Wcal_k$. We are done by arguing that Fr\'echet-like derivatives are unique by following an argument very similar to that of Lemma~\ref{lem:frechet_consistency}. 
\end{proof}

\sloppy Let $k\in\Natural$, and $\round{W_{k,t} = W_k(t)\in\Wcal_k\cap \effdom(f)}_{t\in\rr_+}$ be the Euclidean coordinate gradient flow of $f$. This may be obtained by suitably scaling the solution of the differential equation
\begin{align}
    \deriv{}{t}M_k(W_{k}(t)) &= -\round{\nabla f_k\circ M_k}\round{W_{k}(t)},\label{eq:coordinate_gradient_flow_k}
\end{align}
with initial condition $W_k(0) = W_{k,0}\in \Wcal_k\cap \effdom(f)$, until the process hits the boundary when one or more entries is $\pm 1$. At the boundary, however, the gradient might push the process outside $\Mcal_k$ and it needs some care to have a proper definition. Instead, we consider the Euclidean gradient flow as the limit of implicit Euler iterations as the step size tends to zero. This definition is valid everywhere and is equivalent to the previous one on Euclidean spaces. 

As a consequence of Lemma~\ref{lem:phi_equal_partial_derivatives}, we obtain that the Euclidean coordinate-wise gradient flow on $\Wcal_k$ is the gradient flow on $\Wcal_k$. We are now ready to prove Theorem~\ref{thm:GF_convergence}. For completeness, we reproduce the theorem statement below. 

\begin{theorem}[Convergence of Gradient Flows]
    Suppose $F\colon\Graphons \to \rr\cup\Set*\infty$ satisfies the following conditions:
    \begin{enumerate}
        \item $F$ is continuous in $\delta_\cut$,
        \item $F$ is $\lambda$-semiconvex (Definition~\ref{def:lambda_cvx_along_curve}) along generalized geodesics on $(\Graphons,\delta_2)$ (Definition~\ref{def:gen_geodesics}), for some $\lambda\in\rr$.
    \end{enumerate}
    Consider the gradient flow $\omega^{(k)} = \big(\omega^{(k)}_t\big)_{t\in\rr_+}\subset \Graphons_k$ of $F$ on each $\Graphons_k$, starting at some $\omega^{(k)}_0 = \squarebrack{U_{k,0}}$ for $k\in\Natural$. Assume that the sequence $\round{\squarebrack{U_{k,0}}}_{k\in\Natural}\xrightarrow{\delta_\cut}\squarebrack{U_0}$, and $\abs{\pdiff F}\round{\squarebrack{U_{0}}}<\infty$ and $\limsup_{k\to\infty}\abs{\pdiff F}\round{\squarebrack{U_{k,0}}} \leq G < \infty$, for some $G\geq 0$. Then,
    \begin{align}
        \limsup_{k\to\infty}\sup_{t\in[0,T]}\cutmetric{\omega^{(k)}_t}{\omega_t} &= 0,
    \end{align}
    for any $T\in\rr_+$, where $\omega = \round{\omega_t}_{t\in\rr_+}$ is the unique minimizing movement curve~\cite[Definition 2.0.6, page 42]{ambrosio2005gradient} on $\Graphons$ for the function $F$ starting at $\omega_0 = \squarebrack{U_0}$~\cite[Theorem 4.0.4]{ambrosio2005gradient}.
    In addition, if the conditions for the existence of curves of maximal slope (Theorem~\ref{thm:existence} or  Theorem~\ref{thm:Existence_with_FrehetDerivative}) hold, then $\omega$ is also a curve of maximal slope.
\end{theorem}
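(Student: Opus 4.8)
The plan is to realise both the limiting curve $\omega$ and, for every $k$, the finite‑dimensional flow $\omega^{(k)}$ as limits of implicit Euler (minimizing‑movement) schemes, and to pass between the two families of schemes by means of the $\Gamma$‑convergence statement of Proposition~\ref{prop:gamma_convergence}; all convergence is read off in $\delta_\cut$, but the Euler error bounds are taken in the stronger $\delta_2$. \emph{Step 1 (the limiting curve on $\Graphons$).} Since $F$ is $\delta_\cut$‑continuous it is $\delta_2$‑lower semicontinuous, and it is $\lambda$‑semiconvex along generalized geodesics by hypothesis; pairing this with Lemma~\ref{lem:gen_geodesic_cvx_delta_2} shows that for every $\squarebrack{U}\in\Graphons$ and every $\tau>0$ with $1+\lambda\tau>0$ the penalised functional $\Phi_F\round{\tau,\squarebrack{U};\slot{}}$ is strictly convex along a generalized geodesic based at $\squarebrack{U}$, so $J_\tau$ is single valued. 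With $\delta_\cut$‑compactness of $\Graphons$ and Lemma~\ref{lem:delta_2_lsc}, these are exactly the hypotheses of \cite[Theorem~4.0.4]{ambrosio2005gradient}: there is a unique minimizing movement $\omega$ for $F$ started at $\squarebrack{U_0}$, it coincides with the unique $\mathrm{EVI}_\lambda$ solution, and the discrete solution $\overline{\squarebrack{U_\tauvec}}$ built from the $J_{\tau_n}$ and started at $\squarebrack{U_0}$ satisfies $\sup_{t\in[0,T]}\delta_2\round{\overline{\squarebrack{U_\tauvec}}(t),\omega_t}\le C\round{T,\lambda,\abs{\pdiff F}\round{\squarebrack{U_0}}}\sqrt{\abs{\tauvec}}$, with $C$ locally bounded and $\abs{\pdiff F}\round{\squarebrack{U_0}}<\infty$ by hypothesis.

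\emph{Step 2 (a uniform‑in‑$k$ Euler estimate for $\omega^{(k)}$).} By Lemma~\ref{lem:phi_equal_partial_derivatives} and the discussion around it, after the canonical time rescaling $\omega^{(k)}$ is the image under $\squarebrack{\slot{}}$ of the curve $\round{W^{(k)}_t}_{t\in\rr_+}\subset\Wcal_k$ solving $\tfrac{\diff}{\diff t}W^{(k)}_t=-D_{\Wcal_k}f\round{W^{(k)}_t}\indicator{G_{W^{(k)}_t}}{}$, i.e. precisely the curve produced by Theorem~\ref{thm:Existence_with_FrehetDerivative}. Re‑running the computation of Lemma~\ref{lem:EVI_MM} — differentiating $\delta_2^2\round{\omega^{(k)}_t,\slot{}}$ along the optimal coupling, using the boundary inequality~\eqref{eq:phiVW_leq_phiVtilW}, and then exploiting that the gradient direction at $W^{(k)}_t$ is already optimally aligned with the competitor, so that the generalized‑geodesic $\lambda$‑semiconvexity of $F$ (Definition~\ref{def:gen_geodesics}, Remark~\ref{rem:gen_geo_cvx_from_W}) gives the required subgradient inequality — shows that $\omega^{(k)}$ is itself an $\mathrm{EVI}_\lambda$ solution for $F$ on $(\Graphons,\delta_2)$, with the \emph{same} $\lambda$. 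Because $\Graphons_k\subseteq\Graphons$, the local slope on $\Graphons_k$ is dominated by the one on $\Graphons$, so $\abs{\pdiff_k F}\round{\squarebrack{U_{k,0}}}\le\abs{\pdiff F}\round{\squarebrack{U_{k,0}}}$ and $\limsup_{k\to\infty}\abs{\pdiff_k F}\round{\squarebrack{U_{k,0}}}\le G$. Feeding this $\mathrm{EVI}_\lambda$ curve and the uniform initial‑slope bound into \cite[Theorem~4.0.4]{ambrosio2005gradient} yields $\sup_{t\in[0,T]}\delta_2\round{\overline{\squarebrack{U^{(k)}_\tauvec}}(t),\omega^{(k)}_t}\le C\round{T,\lambda,G+1}\sqrt{\abs{\tauvec}}$ for all large $k$, where $\overline{\squarebrack{U^{(k)}_\tauvec}}$ is the discrete solution generated by the $J_{\tau_n}^{(k)}$ from $\squarebrack{U_{k,0}}$; crucially the constant does not depend on $k$. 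This uniformity is the main obstacle: everything hinges on being able to regard each finite‑dimensional gradient flow as an $\mathrm{EVI}_\lambda$ curve on the \emph{ambient} space $(\Graphons,\delta_2)$ with a single semiconvexity constant and a uniformly bounded initial slope.

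\emph{Step 3 (linking the two schemes).} Fix a partition $\tauvec=\round{\tau_n}$ of mesh small enough that every $J_{\tau_n}$ and $J_{\tau_n}^{(k)}$ is single valued. Argue by induction on $n$: the case $n=0$ is the assumption $\squarebrack{U_{k,0}}\xrightarrow{\delta_\cut}\squarebrack{U_0}$; assuming $\squarebrack{U^{(k)}_{\tauvec,n-1}}\xrightarrow{\delta_\cut}\squarebrack{U_{\tauvec,n-1}}$, Proposition~\ref{prop:gamma_convergence} with step $\tau_n$ says that every $\delta_\cut$‑limit point of $\round{\squarebrack{U^{(k)}_{\tauvec,n}}}_k$ minimises $\Phi_F\round{\tau_n,\squarebrack{U_{\tauvec,n-1}};\slot{}}$ over $\Graphons$; that minimiser is the unique point $\squarebrack{U_{\tauvec,n}}$, and $\round{\Graphons,\delta_\cut}$ is compact, so the whole sequence converges. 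Since on $[0,T]$ each discrete solution is piecewise constant with finitely many values, this upgrades to $\sup_{t\in[0,T]}\cutmetric{\overline{\squarebrack{U^{(k)}_\tauvec}}(t)}{\overline{\squarebrack{U_\tauvec}}(t)}\to0$ as $k\to\infty$.

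\emph{Step 4 (conclusion).} For $t\in[0,T]$ write, using $\delta_\cut\le\delta_2$ on the outer terms,
\[
\cutmetric{\omega^{(k)}_t}{\omega_t}\le\delta_2\round{\omega^{(k)}_t,\overline{\squarebrack{U^{(k)}_\tauvec}}(t)}+\cutmetric{\overline{\squarebrack{U^{(k)}_\tauvec}}(t)}{\overline{\squarebrack{U_\tauvec}}(t)}+\delta_2\round{\overline{\squarebrack{U_\tauvec}}(t),\omega_t}.
\]
Given $\varepsilon>0$, Steps~1 and~2 let us fix $\abs{\tauvec}$ so small that the first and third terms are $<\varepsilon/3$ uniformly in $t\in[0,T]$ and in all large $k$; Step~3 then makes the middle term $<\varepsilon/3$ for all large $k$ and all $t\in[0,T]$. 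Letting $\varepsilon\downarrow0$ gives $\limsup_{k\to\infty}\sup_{t\in[0,T]}\cutmetric{\omega^{(k)}_t}{\omega_t}=0$. Finally, when the hypotheses of Theorem~\ref{thm:existence} or Theorem~\ref{thm:Existence_with_FrehetDerivative} also hold, the minimizing movement $\omega$ constructed above coincides with the curve of maximal slope furnished by those theorems, which gives the remaining assertion.
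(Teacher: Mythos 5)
Your overall architecture matches the paper's: build discrete solutions for the implicit Euler scheme both at the graphon level and on each $\Graphons_k$, invoke Proposition~\ref{prop:gamma_convergence} inductively to make the two families of iterates converge to each other in $\delta_\cut$, control each discrete solution against its own continuous flow by a $k$-uniform quantitative estimate, and close with the triangle inequality. The $\Gamma$-convergence step (Step~3) and the final splitting (Step~4) are exactly what the paper does, and your observation that $\abs{\pdiff_k F}\le\abs{\pdiff F}$ because the limsup in the slope is taken over a smaller set is the right way to feed the hypothesis on $G$ into the estimate.

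There is, however, a genuine gap in Step~2. You claim that, after rerunning the computation of Lemma~\ref{lem:EVI_MM}, the curve $\omega^{(k)}$ is an $\mathrm{EVI}_\lambda$ solution for $F$ on the \emph{ambient} space $(\Graphons,\delta_2)$. That is not true. The flow $W^{(k)}_t$ moves in $\Wcal_k$ with velocity $-D_{\Wcal_k}f\round{W^{(k)}_t}\indicator{G_{W^{(k)}_t}}{}$, and $D_{\Wcal_k}f = \E{D_\Wcal f \given \Fcal_k}$ is the \emph{conditional expectation} of the full Fr\'echet-like derivative. Running the derivation of equation~\eqref{eq:ddt_eq_innerprod} against a general competitor $V\in\Wcal$ produces $\inner{\phi_k\indicator{G_{W^{(k)}_t}}{},\,V - W^{(k)}_t}$, whereas the $\lambda$-semiconvexity inequality~\eqref{eq:g_convexity} supplies a bound for $\inner{\phi,\,V-W^{(k)}_t}$ with $\phi = D_\Wcal f(W^{(k)}_t)$. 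Since $\inner{\phi-\phi_k,\,W^{(k)}_t}=0$ but $\inner{\phi-\phi_k,\,V}$ has no sign for generic $V\notin\Wcal_k$, the two quantities are not comparable, and the EVI fails on $\Graphons$. What actually holds, and what you truly need, is the weaker statement that $\omega^{(k)}$ is an $\mathrm{EVI}_\lambda$ solution (equivalently a minimizing movement) on the metric space $(\Graphons_k,\delta_2)$ for the restriction $F\vert_{\Graphons_k}$, with the same $\lambda$; the $k$-uniformity of the a priori estimate then comes not from ``lifting'' the flow to the ambient space but from the fact that the error bound in \cite[Equation~4.0.6, Theorems~4.0.9--4.0.10]{ambrosio2005gradient} depends only on $\lambda$, $T$, the mesh size, and the initial slope, none of which degrade with $k$. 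This is exactly how the paper proceeds: it applies those abstract estimates separately on each $(\Graphons_k,\delta_2)$, using $\abs{\pdiff_k F}\round{\squarebrack{U_{k,0}}}$ (bounded by $G$) in the $k$-th estimate and $\abs{\pdiff F}\round{\squarebrack{U_0}}$ in the limiting one, giving the explicit $\gamma$-bound displayed in equations~\eqref{eq:4.0.6_k}--\eqref{eq:4.0.6}. Replacing your ``EVI on the ambient space'' assertion with ``EVI on $(\Graphons_k,\delta_2)$, estimate constants independent of the underlying metric space'' repairs the argument and brings it in line with the paper's.
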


\begin{proof} By increasing the constant $G$ suitably, we may assume that 
\begin{equation}\label{eq:increaseG}
\max\Set*{\sup_{k\ge 2}\abs{\pdiff F}\round{\squarebrack{U_{k,0}}}, \abs{\pdiff F}\round{\squarebrack{U_{0}}}}\le G < \infty. 
\end{equation}
Fix $T>0$ and let $\tauvec_m$ be a sequence of positive time steps such that $\abs{\tauvec_m}=T/m$. Since $F\colon\Graphons\to\rr\cup\Set*{\infty}$ is $\delta_\cut$-continuous and $\delta_2\round{\squarebrack{U},\slot{}}\colon\Graphons \to \rr\cup\Set*{\infty}$ is $\delta_\cut$-lower semicontinuous, the functional $\Phi_F\round{\tau,\squarebrack{U};\slot{}}$ is $\delta_\cut$-lower semicontinuous. From Lemma~\ref{lem:gen_geodesic_cvx_delta_2}, it follows that $\Phi_F$ satisfies~\cite[Assumption 4.0.1]{ambrosio2005gradient} and hence by~\cite[Proposition 4.0.4]{ambrosio2005gradient} we have that
\[
    \omega^{(k)}(t) = \toplim{\delta_\cut}_{m\to\infty}\round{J^{(k)}_{t/m}}^{m}\round{\squarebrack{U_{k,0}}}\, , \;\quad \omega(t) = \toplim{\delta_\cut}_{m\to\infty}\round{J_{t/m}}^{m}\round{\squarebrack{U_{0}}},
\]
exist and are unique for all $k\in\Natural$ and $t\in\interval{0,T}$. 

Let $\overline{\squarebrack{U_{k,\tauvec_m}}}\colon[0,T]\to\Graphons$ be the discrete solution (Definition~\ref{def:discrete_sol}) of the implicit Euler method with the sequence $\tauvec_m$ and initial point $\squarebrack{U_{k,0}}$, for each $k\in\Natural$. Inductively applying the Proposition~\ref{prop:gamma_convergence}, we obtain $\overline{\squarebrack{U_{\tauvec_m}}}\colon[0,T]\to\Graphons$ such that $\overline{\squarebrack{U_{\tauvec_m}}}$ is the discrete solution of the implicit Euler method with the sequence $\tauvec_m$ and initial point $\squarebrack{U_{0}}\in\Graphons$. Passing to a subsequence and relabeling, we may assume that $\round{\overline{[{U}_{k, \tauvec_m}]}}_{k\in\Natural}\xrightarrow{\delta_\cut} \overline{[{U}_{\tauvec_m}]}$ uniformly on $[0, T]$ as $k\to \infty$, that is, for any fixed sequence of step sizes $\tauvec_m$, we have $\delta_{\cut}\round{\overline{\squarebrack{U_{k,\tauvec_m}}}(t), \overline{\squarebrack{U_{\tauvec_m}}}(t)}\to 0$ uniformly over $t\in[0, T]$  as $k\to \infty$. For every $t\in\interval{0,T}$ we have
\begin{align}
    \delta_2\round{\overline{\squarebrack{U_{k,\tauvec_m}}}(t), \omega^{(k)}(t)} &< \gamma(\abs{\tauvec_m},\lambda,t;T,\abs{\pdiff F_k}\round{\squarebrack{U_{k,0}}}),\label{eq:4.0.6_k}\\
    \delta_2\round{\overline{\squarebrack{U_{\tauvec_m}}}(t), \omega(t)} &< \gamma(\abs{\tauvec_m},\lambda,t;T,\abs{\pdiff F}\round{\squarebrack{U_{0}}}),
    \label{eq:4.0.6}
\end{align}
for every $k\in\Natural$ where
\[
    \gamma\colon \Set*{(\tau,\lambda,t,T)\in \rr_{++}\times\rr\times\rr_{++}\times\rr_{++} \given \tau\lambda>-1, \tau\leq T,t\leq T} \times (0,\infty) \to \rr_+
\]
is defined as
\begin{align}
    \gamma(\tau,\lambda,t;T,G) &\coloneqq \begin{cases}
        \frac{\tau G}{\sqrt{2}}\, , & \text{if $\lambda=0$},\\
        \frac{1+2\abs{\lambda}T}{1+\lambda\tau}\cdot\frac{\tau G}{\sqrt{2}}\cdot \exp\round{-\ln\round{\frac{1+\lambda\tau}{\tau}}t}\, , & \text{if $\lambda<0$},\\
        \sqrt{1+2\lambda T} \cdot\frac{\tau G}{\sqrt{2}}\cdot \exp\round{-\ln\round{\frac{1+\lambda\tau}{\tau}}t}\, , & \text{if $\lambda>0$},
    \end{cases}
\end{align}
by~\cite[Equation 4.0.6, Theorem 4.0.9, Theorem 4.0.10]{ambrosio2005gradient}, and the uniform bound in equation~\eqref{eq:increaseG}. Note that $\gamma$ is independent of $k$. Using the triangle inequality, we get 
\begin{align}
    \delta_{\cut}\round{\omega^{(k)}_t, \omega_t} &\le \delta_{\cut}\round{\omega^{(k)}_t, \overline{\squarebrack{U_{k,\tauvec_m}}}(t)}+\delta_{\cut}\round{\overline{\squarebrack{U_{k,\tauvec_m}}}(t), \overline{\squarebrack{U_{\tauvec_m}}}(t)}\nonumber\\
    &\qquad\qquad\qquad+\delta_{\cut}\round{\overline{\squarebrack{U_{\tauvec_m}}}(t), \omega_t} \nonumber\\
    &\le\delta_2\round{\omega^{(k)}_t,\overline{\squarebrack{U_{k,\tauvec_m}}}(t)}+\delta_{\cut}\round{\overline{\squarebrack{U_{k,\tauvec_m}}}(t), \overline{\squarebrack{U_{\tauvec_m}}}(t)}\nonumber\\
    &\qquad\qquad\qquad+\delta_2\round{\overline{\squarebrack{U_{\tauvec_m}}}(t), \omega_t}\nonumber \\
    &\le 2\gamma(\abs{\tauvec_m}, \lambda,t; T, G)+ \delta_{\cut}\round{\overline{\squarebrack{U_{k,\tauvec_m}}}(t), \overline{\squarebrack{U_{\tauvec_m}}}(t)},
 \label{eqn:unifInequality}
\end{align}
for all $k\in\Natural$ and $t\in\interval{0,T}$ by equations~\eqref{eq:4.0.6_k} and~\eqref{eq:4.0.6}.

It is clear that $\gamma(\abs{\tauvec_m}, \lambda,t; T, G)\to 0$ uniformly on $\interval{0,T}$ as $\abs{\tauvec_m}\to 0$. Therefore, we conclude from equation~\eqref{eqn:unifInequality} that $\delta_{\cut}\round{\omega^{(k)}_t, \omega_t}\to 0$ uniformly on $t\in[0, T]$ as $k\to\infty$.
\end{proof}
\begin{remark}
    The proof of the Theorem~\ref{thm:GF_convergence} can be carried as long as we have the uniform estimates in equation~\eqref{eq:4.0.6_k}. In particular, if $\nabla f_k\circ M_k$ are uniformly Lipschitz, and there is a constant $m\in\rr_+$ such that
    \begin{align*}
        f_k(B) &\leq f_k(A) + \inner{\nabla f_k(A),B-A} + \frac{m}{2}\normF{B-A}^2,
    \end{align*}
    for all $A,B\in\Mcal_k$, where $f_k\coloneqq \round{f\circ K}\mid_{\Mcal_k}$, then~\cite[Theorem 212A]{butcher2016numerical} guarantees a uniform estimate in~\eqref{eq:4.0.6_k} and therefore the conclusion of the theorem remains valid.
\end{remark}

\subsection{Continuity Equations}\label{sec:continuity_eq}

It is well-known that any absolutely continuous curve in the Wasserstein space can be represented as the solution of a continuity equation~\cite[Section 5.3]{santambrogio2015optimal}. Something analogous is partially true for graphons as well. However, the presence of the boundary in $\Graphons$ makes the situation more delicate and we can only characterize AC curves via the continuity equation until it hits the boundary. 

Before we state the main result, we introduce some notations. Let $v\in L^1(\interval{0,1}^{(2)})$ and let $\squarebrack{W}\in\Graphons$. Let $W\in\squarebrack{W}$ be a representative of $[W]$. For any $k\in\Natural$,  we can define $X_k\colon\interval{0,1}^k\to\Mcal_k$ and $v_k\colon \Mcal_k \to \Rd{\squarebrack{k}^{(2)}}$ as
\begin{align}
    X_k\round{\round{u_\ell}_{\ell=1}^k} &\coloneqq \round{W(u_i,u_j)}_{(i,j)\in\squarebrack{k}^{(2)}},\nonumber\\
    v_k(z)(i,j) &= \E{v(U_i,U_j) \given X_k\round{\round{U_\ell}_{\ell=1}^k} = z},\label{eq:general_velocity}
\end{align}
where $\Set*{U_i}_{i\in\Natural}$ are i.i.d. as $\mathrm{Uni}\interval{0,1}$. Intuitively, formula~\eqref{eq:general_velocity} means that we average the edge weights from $v$ over all embedding of the vertex labeled weighted graph $z$ in the graphon $W$. 
Since $X_{k-1}$ is a leading principle submatrix of $X_k$, i.e., $X_{k-1} = \round{X_k(i,j)}_{(i,j)\in[k-1]^{(2)}}$, we have that $\sigma\round{X_{k-1}} \subseteq \sigma\round{X_k}$, which defines a filtration $\Fcal = \round{\Fcal_k \coloneqq \sigma(X_k)}_{k\in\Natural}$. It is clear that $\round{v_{k}}_{k\in\Natural}$ is a martingale with respect to the filtration $\Fcal$. We also note that that $v_k$ is a function of the graphon and not its kernel representative. We record both these observations as lemma below. 

\begin{lemma}\label{lem:v_martingale}
    For every $i, j\in\Natural$, the process $\round{v_{k}(X_k)(i, j)}_{k=\max\{i, j\}}^\infty$ is a martingale with respect to the filtration $\Fcal$.
\end{lemma}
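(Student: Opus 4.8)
The plan is to identify, for fixed $i,j\in\Natural$, the sequence $\round{v_k(X_k)(i,j)}_{k\ge\max\Set*{i,j}}$ with the conditional expectations of a single integrable random variable against the increasing family $\Fcal = \round{\Fcal_k}_{k\in\Natural}$, after which the martingale property is just the tower property. So there is no substantial obstacle here; the content is entirely in unwinding the definition~\eqref{eq:general_velocity}.

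First I would fix $i,j\in\Natural$, take the i.i.d. sample $\Set*{U_\ell}_{\ell\in\Natural}\sim\mathrm{Uni}\interval{0,1}$ from~\eqref{eq:general_velocity}, and set $\xi\coloneqq v(U_i,U_j)$. Since $(U_i,U_j)$ is distributed as Lebesgue measure on $\interval{0,1}^2$ and $v\in L^1\big(\interval{0,1}^{(2)}\big)$, one has $\E{\abs{\xi}} = \int_{\interval{0,1}^2}\abs{v(x,y)}\diff x\diff y<\infty$, so $\xi$ is integrable. Next I would recall that $\Fcal$ really is a filtration: $X_{k-1}$ is the leading principal submatrix of $X_k$, hence a measurable function of $X_k$, so $\Fcal_{k-1}=\sigma(X_{k-1})\subseteq\sigma(X_k)=\Fcal_k$ --- exactly the observation made just before the lemma. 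Then, reading off~\eqref{eq:general_velocity}, for every $k\ge\max\Set*{i,j}$ the random variable $v_k(X_k)(i,j)$ is a version of $\E{\xi\given\Fcal_k}$, hence $\Fcal_k$-measurable and integrable. Finally, for such $k$ the tower property together with $\Fcal_k\subseteq\Fcal_{k+1}$ gives $\E{v_{k+1}(X_{k+1})(i,j)\given\Fcal_k}=\E{\E{\xi\given\Fcal_{k+1}}\given\Fcal_k}=\E{\xi\given\Fcal_k}=v_k(X_k)(i,j)$, which is the claimed martingale identity.

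The one point requiring a sentence of care is the identification $v_k(X_k)(i,j)=\E{\xi\given\Fcal_k}$ almost surely: this is the standard factorization property of conditional expectation given a random variable, applied to the definition $v_k(z)(i,j)=\E{\xi\given X_k=z}$. If one also wishes to record here the companion remark that $v_k$ depends only on the graphon $[W]$ and not on the chosen kernel, I would observe that replacing $W$ by a representative $W^\varphi$ with $\varphi\in\Tcal$ (and correspondingly $v$ by $v^\varphi$) merely replaces $\round{U_\ell}_{\ell\in\Natural}$ by $\round{\varphi(U_\ell)}_{\ell\in\Natural}$, which has the same law because $\varphi$ is measure preserving; hence the right-hand side of~\eqref{eq:general_velocity} is unchanged.
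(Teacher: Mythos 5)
Your proof is correct and follows the same route as the paper's own argument: write $v_k(X_k)(i,j)$ as $\E{v(U_i,U_j)\given\Fcal_k}$ and apply the tower property with respect to the nested $\sigma$-algebras $\Fcal_k\subseteq\Fcal_{k+1}$. The extra observations you include --- integrability of $v(U_i,U_j)$, that $\Fcal$ is indeed a filtration, and the well-definedness of $v_k$ on graphons --- are all sound and merely flesh out what the paper leaves implicit.
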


\begin{lemma}\label{lem:invariance_of_v_kW_k}
    For any $\varphi\in \Tcal$, we have $v_k^\varphi\round{X^\varphi_k} = v_k(X_k)$,
    for all $k\in\Natural$, where $v^\varphi(x,y) \coloneqq v\round{\varphi(x),\varphi(y)}$ for all $(x,y)\in \interval{0,1}^{(2)}$.
\end{lemma}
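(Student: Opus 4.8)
The plan is to reduce the statement to the assertion that the deterministic function $v_k\colon\Mcal_k\to\Rd{\squarebrack{k}^{(2)}}$ depends on the representative $W$ of $[W]$ and on the kernel $v$ only through the coupled pair $\round{[W],[v]}$, so that replacing $\round{W,v}$ by $\round{W^\varphi,v^\varphi}$ leaves $v_k$ unchanged as a function on $\Mcal_k$ (up to a null set) and leaves the law of $v_k(X_k)$ unchanged; the displayed identity $v_k^\varphi\round{X_k^\varphi}=v_k(X_k)$ is read in exactly this sense, which is the point that $v_k(X_k)$ is attached to the graphon and not to the chosen kernel. First I would record the elementary bookkeeping: for $\round{u_\ell}_{\ell=1}^k\in\interval{0,1}^k$,
\begin{align*}
 X_k^\varphi\round{\round{u_\ell}_{\ell=1}^k}
 &=\round{W^\varphi(u_i,u_j)}_{(i,j)\in\squarebrack{k}^{(2)}}
 =\round{W(\varphi(u_i),\varphi(u_j))}_{(i,j)\in\squarebrack{k}^{(2)}}\\
 &=X_k\round{\round{\varphi(u_\ell)}_{\ell=1}^k},
\end{align*}
so $X_k^\varphi=X_k\circ\varphi^{\times k}$, and similarly $v^\varphi(U_i,U_j)=v(\varphi(U_i),\varphi(U_j))$; thus both the matrix and the edge weights are pulled back along $\varphi$ in the same fashion.

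Next I would use the only hypothesis available, namely that $\varphi\in\Tcal$ is Lebesgue measure preserving. If $\Set*{U_\ell}_{\ell\in\Natural}$ are i.i.d.\ $\mathrm{Uni}\interval{0,1}$, then $\Set*{\varphi(U_\ell)}_{\ell\in\Natural}$ are again i.i.d.\ $\mathrm{Uni}\interval{0,1}$: each $\varphi(U_\ell)$ is uniform since $\varphi$ is measure preserving, and independence is inherited since they are functions of distinct independent coordinates. Hence, for each $k$, the $\round{\Mcal_k\times\Rd{\squarebrack{k}^{(2)}}}$-valued pair $\round{X_k\round{\round{\varphi(U_\ell)}_{\ell=1}^k},\,\round{v(\varphi(U_i),\varphi(U_j))}_{(i,j)}}$ has the same distribution as $\round{X_k\round{\round{U_\ell}_{\ell=1}^k},\,\round{v(U_i,U_j)}_{(i,j)}}$. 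By definition, $v_k(z)$ and $v_k^\varphi(z)$ are (versions of) the regular conditional expectation of the second coordinate given that the first equals $z$, for these two pairs respectively; since $\Mcal_k$ and $\interval{0,1}$ are Polish (in fact $\Mcal_k$ is compact), such regular conditional expectations exist and are, $\mathrm{Law}\round{X_k\round{\round{U_\ell}_{\ell=1}^k}}$-a.e.\ in $z$, determined by the joint distribution alone. Therefore $v_k^\varphi=v_k$ for a.e.\ $z$ with respect to the common law of $X_k\round{\round{U_\ell}_{\ell=1}^k}=X_k^\varphi\round{\round{U_\ell}_{\ell=1}^k}$, which is the claimed identity; together with Lemma~\ref{lem:v_martingale} it records that $v_k(X_k)$ is a function of the graphon only.

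The hard part will be making rigorous the step ``the regular conditional expectation is determined by the joint law''. I would do this by fixing, once and for all, a disintegration of $\mathrm{Law}\round{X_k\round{\round{U_\ell}_{\ell=1}^k},\,\round{v(U_i,U_j)}_{(i,j)}}$ along its first marginal and \emph{defining} $v_k$ (hence $v_k^\varphi$) through that disintegration; the equality of laws from the measure preserving change of variables then forces $v_k^\varphi=v_k$ with nothing further to check. A variant avoiding disintegrations is to verify the characterizing testing identity directly: for every bounded measurable $g\colon\Mcal_k\to\rr$ one has $\E{v_k^\varphi\round{X_k^\varphi}(i,j)\,g\round{X_k^\varphi}}=\E{v^\varphi(U_i,U_j)\,g\round{X_k^\varphi}}$; rewriting both sides through $\varphi^{\times k}$ and invoking that $\round{\varphi(U_\ell)}_\ell$ is i.i.d.\ $\mathrm{Uni}\interval{0,1}$ turns the right-hand side into $\E{v(U_i,U_j)\,g(X_k)}$ and the left-hand side into $\E{v_k^\varphi\round{X_k}(i,j)\,g(X_k)}$, which is precisely the relation characterizing $v_k(X_k)(i,j)$; uniqueness of conditional expectation then gives $v_k^\varphi=v_k$.
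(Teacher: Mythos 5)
Your proof is correct and follows essentially the same route as the paper's: pass to $V_\ell\coloneqq\varphi(U_\ell)$, use that $\varphi$ measure preserving makes $(V_\ell)_\ell$ i.i.d. $\mathrm{Uni}[0,1]$, and conclude that the two conditional expectations coincide because the joint laws of (matrix, edge-weight vector) agree. The paper compresses this into a single chain of equalities, whereas you make explicit the (correct) underlying point that the conditional-expectation functional $z\mapsto v_k(z)$ is determined, $\rho_k$-a.e., by the joint distribution alone; this is a useful clarification of a step the paper leaves implicit, but not a different argument.
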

\begin{proof}
For any $\varphi\in\Tcal$, given $\Set*{U_i}_{i\in\Natural}$, let us define $V_i \coloneqq \varphi(U_i)$ for all $i\in\Natural$. Since $\varphi_\sharp\lambda_{\interval{0,1}} = \lambda_{\interval{0,1}}$, $\Set*{V_i}_{i\in\Natural}$ is a set of i.i.d. uniform random variables in $\interval{0,1}$. Using this, observe that for any $(i,j)\in\squarebrack{k}^{(2)}$,
\begin{align}\label{eqn:v_k_definition}
    v^\varphi_k\round{X^\varphi_k}(i,j) &= \E{v^\varphi\round{U_i,U_j}\given X^\varphi_k\round{\round{U_\ell}_{\ell=1}^k}}\nonumber\\
    &= \E{v\round{\varphi(U_i),\varphi(U_j)}\given X_k\round{\varphi\round{U_\ell}_{\ell=1}^k}}\nonumber\\
    &= \E{v\round{V_i,V_j}\given X_k\round{\round{V_\ell}_{\ell=1}^k}}
    = v_k\round{X_k}(i,j),
\end{align}
holds, completing the proof.
\end{proof}

Suppose we are given some $\omega = \round{\omega_t}_{t\in[0,1]}\in\mathrm{AC}(\Graphons,\delta_2)$. From Lemma~\ref{lem:representation_graphons}, we obtain $\round{W_t}_{t\in\interval{0,1}}\in\mathrm{AC}\round{\Wcal,d_2}$ such that $\omega_t = \squarebrack{W_t}$ for all $t\in\interval{0,1}$. It follows from the Radon-Nikod\'ym property~\cite[page 30, Theorem 5]{HUFF19771} that there exists $v_t \coloneqq W'_t \in L^2(\interval{0,1}^{(2)})$, for a.e. $t\in\interval{0,1}$, such that $W_t-W_0=\int_0^t v_s \diff s$, where the integral is pointwise.

For any $t\in[0,1]$, let $v_{k, t}$ and $X_{k, t}$ be defined as in equation~\eqref{eq:general_velocity} with $W_t$ replacing the role of $W$ and $v_t$ replacing $v$. The following Proposition~\ref{prop:AC_to_CE} shows that the $\rho_{k, t}=\mathrm{Law}(X_{k, t}((U_i)_{i=1}^{k}))$ satisfies continuity equation with the velocity $v_{k,t}$ defined as
\[
    v_{k,t}(z) \coloneqq \E{\round{W'_{t}(U_i,U_j)}_{(i,j)\in\squarebrack{k}^{(2)}} \given \round{W_{t}(U_i,U_j)}_{(i,j)\in\squarebrack{k}^{(2)}} = z}, \qquad z\in  \Mcal_k.
\]

\begin{proposition}\label{prop:AC_to_CE}
    Let $k\in\Natural$, and $\rho_{k,t} = \mathrm{Law}\round{X_{k,t}(\round{U_i}_{i=1}^k)}$. Then, for a.e. $t\in\interval{0,1}$, the continuity equation $\pdiff_t\rho_{k,t} + \divergence\round{v_{k,t}\rho_{k,t}} = 0$ holds weakly with Dirichlet boundary conditions. 
    That is, for any continuously differentiable test function $f$ on $\Mcal_k$, vanishing at the boundary, 
    \[
        \pdiff_t \round{\int f(z) \diff\rho_{k,t}(z)} -\int \nabla f(z) v_{k,t}(z) \diff \rho_{k,t}(z)=0, \qquad\text{a.e.}\quad t\in[0,1].
    \]
\end{proposition}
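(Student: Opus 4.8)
The plan is to derive the continuity equation for $\rho_{k,t}$ from the fact that $(W_t)$ is an absolutely continuous curve in $(\Wcal,d_2)$ with pointwise derivative $v_t = W_t'$. Fix $k\in\Natural$ and a $C^1$ test function $f$ on $\Mcal_k$ vanishing on the boundary. Let $U_1,\dots,U_k$ be i.i.d. uniform on $[0,1]$, independent of $t$. The starting point is to write $\int f\,\diff\rho_{k,t} = \E{f\round{X_{k,t}\round{(U_i)_{i=1}^k}}}$, where $X_{k,t}$ is the random $k\times k$ matrix with entries $W_t(U_i,U_j)$. First I would justify differentiating under the expectation in $t$: since $t\mapsto W_t$ is absolutely continuous into $L^2([0,1]^{(2)})$ (indeed Lipschitz after reparametrization, Remark~\ref{rem:reparameterize}), the map $t\mapsto X_{k,t}\round{(U_i)_{i=1}^k}$ is absolutely continuous in $\Mcal_k$ for a.e. realization of $(U_i)$, with a.e.\ derivative the matrix with entries $W_t'(U_i,U_j) = v_t(U_i,U_j)$; combined with the boundedness of $\nabla f$ on $\Mcal_k$ and dominated convergence, this gives
\[
    \pdiff_t\round{\int f(z)\,\diff\rho_{k,t}(z)} = \E{\nabla f\round{X_{k,t}\round{(U_i)_{i=1}^k}}\cdot \round{v_t(U_i,U_j)}_{(i,j)\in[k]^{(2)}}}
\]
for a.e.\ $t\in[0,1]$.

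The second step is the conditioning argument. Since $X_{k,t}\round{(U_i)_{i=1}^k}$ is $\sigma(X_{k,t})$-measurable (indeed it \emph{is} that random matrix), the tower property of conditional expectations gives
\[
    \E{\nabla f\round{X_{k,t}}\cdot \round{v_t(U_i,U_j)}_{(i,j)}} = \E{\nabla f\round{X_{k,t}}\cdot \E{\round{v_t(U_i,U_j)}_{(i,j)} \given X_{k,t}}} = \E{\nabla f\round{X_{k,t}}\cdot v_{k,t}\round{X_{k,t}}},
\]
which is exactly $\int \nabla f(z)\, v_{k,t}(z)\,\diff\rho_{k,t}(z)$ by the definition of $\rho_{k,t}$ as the law of $X_{k,t}\round{(U_i)_{i=1}^k}$. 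Combining the two displays yields the claimed weak form $\pdiff_t\round{\int f\,\diff\rho_{k,t}} - \int \nabla f\cdot v_{k,t}\,\diff\rho_{k,t} = 0$ for a.e.\ $t$, which is the statement that $\pdiff_t\rho_{k,t} + \divergence(v_{k,t}\rho_{k,t}) = 0$ holds weakly with Dirichlet (i.e.\ no-boundary-term) conditions — the vanishing of $f$ at the boundary is precisely what kills the boundary contribution when one formally integrates by parts to pass between the two forms.

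I also need a measurability/integrability check that $v_{k,t}$ is well-defined and $\rho_{k,t}$-integrable, and that the whole expression is jointly measurable in $t$; this follows since $v_t\in L^2([0,1]^{(2)})$ for a.e.\ $t$ so $v_t(U_i,U_j)\in L^2$, conditional expectation is an $L^2$-contraction, and $\nabla f$ is bounded on the compact set $\Mcal_k$. One should also remark that $v_{k,t}$ depends only on the graphon $\omega_t=[W_t]$ and not on the representative $W_t$ — this is Lemma~\ref{lem:invariance_of_v_kW_k} — so the statement is intrinsic.

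The main obstacle I expect is the rigorous justification of interchanging $\pdiff_t$ with $\E{\cdot}$: one must control the difference quotients $\frac{1}{h}\round{W_{t+h}(U_i,U_j) - W_t(U_i,U_j)}$ uniformly enough to apply dominated convergence. The clean way is to first reparametrize so the curve is $L$-Lipschitz in $d_2$ (Remark~\ref{rem:reparameterize}), use $W_t - W_0 = \int_0^t v_s\,\diff s$ pointwise (Radon--Nikod\'ym property, as already invoked in the excerpt), and apply the Lebesgue differentiation theorem fiberwise together with Fubini to move between "$\E{}$ then $\int_0^t$" and "$\int_0^t$ then $\E{}$"; then the bound $\abs{\nabla f}\le C$ on $\Mcal_k$ makes the dominating function $C\cdot k^2\cdot\sup_{i,j}\abs{v_t(U_i,U_j)}$, integrable in $(U_i)$ for a.e.\ $t$ and locally integrable in $t$. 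Everything else is bookkeeping.
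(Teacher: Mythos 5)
Your proposal is correct and follows essentially the same route as the paper's proof: rewrite $\int f\,\diff\rho_{k,t}$ as $\E{f(X_{k,t})}$, differentiate in $t$, apply the chain rule, and then identify the resulting expectation with $\int\nabla f\cdot v_{k,t}\,\diff\rho_{k,t}$ via conditioning on $X_{k,t}$ (the paper phrases this last step in terms of the disintegration $\{\mu_z\}$ of $\lambda_{[0,1]^k}$ with respect to $X_{k,t}$, which is the same thing as your tower-property computation). Your added care in justifying the interchange of $\pdiff_t$ and $\E{\cdot}$ (reparametrize to Lipschitz, use the pointwise integral representation $W_t-W_0=\int_0^t v_s\,\diff s$, Fubini plus Lebesgue differentiation, with $\|\nabla f\|_\infty$ as the bound) fills in a step the paper handles more informally.
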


\begin{proof}
\sloppy  By definition, $\rho_{k,t}$ is the law of the random symmetric matrix $X_{k,t}(\round{U_i}_{i=1}^k)$. Fix some $f\in C^1(\Mcal_k,\rr)$ vanishing at the boundary of $\Mcal_k$. By change of variables 

\begin{align}
    \int_{\Mcal_k} f(z) \rho_{k,t}(z) \diff z &=  \int_{\interval{0,1}^k} f\round{\round{W_t(u_i,u_j)}_{(i,j)\in\squarebrack{k}^{(2)}}} \diff \lambda_{\interval{0,1}^k}\round{u},\label{eq:test_integral}
\end{align}
where $\lambda_{\interval{0,1}^k}$ is the Lebesgue measure on $\interval{0,1}^k$, and $u = \round{u_i}_{i=1}^k$. Note that $v_{k,t}\in L^2\round{\rho_{k,t}}$. Taking time derivative on both sides of equation~\eqref{eq:test_integral} for $t$ in the set of full measure where $W_t'$ is defined,
\begin{align}
    &\;\pdiff_t \int_{\Mcal_k} f(z)  \rho_{k,t}(z) \diff z\nonumber\\
    &= \int_{\interval{0,1}^k} \pdiff_t f\round{\round{W_t(u_i,u_j)}_{(i,j)\in\squarebrack{k}^{(2)}}} \diff \lambda_{\interval{0,1}^k}\round{u}\nonumber\\
    &= \int_{\interval{0,1}^k} \inner{\round{\nabla f\circ X_{k,t}}\round{u},\round{W'_t(u_i,u_j)}_{(i,j)\in\squarebrack{k}^{(2)}}} \diff \lambda_{\interval{0,1}^k}\round{u}\nonumber\\
    &= \int_{\Mcal_k} \inner{\nabla f(z), \int_{\Set*{u \in \interval{0,1}^k \given X_{k,t} = z}} \round{W'_t\round{u_i,u_j}}_{(i,j)\in\squarebrack{k}^{(2)}} \diff \mu_z\round{u}}\diff z,
\end{align}
where $\Set*{\mu_z}_{z\in\Mcal_k}$ is the disintegration of $\lambda_{\interval{0,1}^k}$, with respect to the function $X_{k,t}$. By definition of $v_{t,k}$, the above expression is exactly equal to $\int_{\Mcal_k} \inner{\nabla f(z), v_{k,t}(z)\rho_{k,t}(z)}\diff z$. This completes the proof.
\end{proof}

Proposition~\ref{prop:AC_to_CE} shows that an absolutely continuous curve in $(\Graphons, \delta_2)$ determines a family of continuity equations. In~\cite{HOPST22}, the authors take this analogy further and show that in the presence of noise the limiting curve can be described by a certain McKean-Vlasov type equation. 

As an example, consider the continuity equation for the scalar entropy.
From equation~\eqref{eqn:phi_Ent} it follows that the velocity of gradient flow for scalar entropy function satisfies
\begin{equation}
   v([W])(x,y)= -\log\left( \frac{W(x,y)}{1- W(x,y)} \right), \qquad (x,y) \in [0,1]^{(2)}\;,
\end{equation}
if $0<W<1$ a.e. Let $\Set*{U_i}_{i=1}^{\infty}$ be i.i.d. $\mathrm{Uni}\interval{0,1}$, then using equation~\eqref{eq:general_velocity} we can compute the velocities $\round{v_k}_{k\in\Natural}$ appearing in the continuity equation as 
\begin{align*}
    v_k(z)(i,j) &= \E{v(U_i,U_j) \given X_k\round{\round{U_\ell}_{\ell=1}^k} = z}\;\\
    &=-\E{\log\left( \frac{W(U_i,U_j)}{1- W(U_i,U_j)} \right)\given X_k\round{\round{U_\ell}_{\ell=1}^k} = z}\\
    &=-\log\left( \frac{z(i,j)}{1- z(i,j)} \right), \qquad z\in\Mcal_k, \quad i,j\in[k].
\end{align*}
Unfortunately, for our other examples, the velocity fields do not have closed form expressions since they are averages over all possible embeddings of a labeled weighted graph in a graphon.
\section{Examples of Gradient Flows on Graphons}\label{sec:examples_GF}

\sloppy In this section we find some natural classes of examples of functions on graphons with Fr\'echet-like derivatives. For any graphon $\squarebrack{W}\in\Graphons$ and any $k\in\Natural$, sample $\Set*{Z_i}_{i\in\squarebrack{k}}$ i.i.d. from $\mathrm{Uni}[0,1]$. Let $G_k\squarebrack{W} = \round{W\round{Z_i,Z_j}}_{(i,j)\in\squarebrack{k}^{(2)}}$. 
Let $\rho_k\round{\squarebrack{W}}=\mathrm{Law}\round{G_k\squarebrack{W}}$ denote its law over $k\times k$ symmetric matrices. Consider the functions $F_k\colon\Graphons \to \rr\cup\Set*\infty$ defined through the function composition $F_k = H_k \circ \rho_k$, for different choices of $H_k\colon \Pcal(\Mcal_k) \to \rr\cup\Set*\infty$. Notice that the function $F_k$ is well-defined over $\Graphons$. 

\subsection{Linear functions}
Let $H_k\colon \Pcal(\Mcal_k) \to \rr\cup\Set*\infty$ be defined as a linear function such that $F_k$ takes the form
\begin{align}
    F_k\round{\squarebrack{W}} &= \inner{f_k,\rho_k\round{\squarebrack{W}}} \coloneqq \int_{\Mcal_k} f_k(z)\rho_k\round{\squarebrack{W}}(\diff z),\label{eq:def_Fk}
\end{align}
for all $\squarebrack{W}\in\Graphons$ such that $\supp\round{\rho_k\round{\squarebrack{W}}}\subseteq \effdom(f_k)$, where $f_k\colon \Mcal_k\to\rr\cup\Set*\infty$ satisfies $f_k\in L^1\round{\rho_k}$ and $\nabla f_k \in L^\infty\round{\rho_k}$. Let $f_k$ be continuously differentiable on an open set containing $\supp\round{\rho_k}$.
Suppose that $f_k$ satisfies either Assumption~\ref{asmp:taylor_1} or Assumption~\ref{asmp:taylor_2} stated below.
\begin{assumption}\label{asmp:taylor_1}
    For any $\varepsilon >0$ there is some $\delta_\varepsilon >0$ such that for all $X,X_0\in\Mcal_k$ satisfying $\norm{2}{X-X_0} \le \delta_\varepsilon$, we have
    \begin{align*}
        \left\lvert    f_k(X) - f_k(X_0) - \inner{\nabla f_k(X_0),X-X_0} \right\rvert\le \varepsilon \enorm{X-X_0},
    \end{align*}
    where $\inner{\slot{},\slot{}}\colon \Mcal_k(\rr)\times \Mcal_k(\rr) \to\rr$ and $\enorm{}$ are the standard Frobenius inner product and Frobenius norm over $k\times k$ matrices respectively.
\end{assumption}
\begin{assumption}\label{asmp:taylor_2}
    There is a constant $C_0$ such that 
    \[
        \sup_{X, X_0\in\Mcal_k} \left\lvert    f_k(X) - f_k(X_0) - \inner{\nabla f_k(X_0),X-X_0} \right\rvert\le C_0.
    \]
\end{assumption}
Then, $F_k$ admits a Fr\'echet-like derivative as shown below. By change of variables, note that
\begin{align}
    F_k(\squarebrack{W}) &= \Exp{X_k\sim\rho_k\round{\squarebrack{W}}}{f_k(X_k)}\nonumber\\
    &= \Exp{\Set*{Z_i\sim {\rm Uni}\interval{0,1}}_{i=1}^k}{f_k\round{\round{W\round{Z_i,Z_j}}_{(i,j)\in\squarebrack{k}^{(2)}}}}.\label{eq:change_of_var_Fk}
\end{align}
For two different graphons $[V], [W]\in\Graphons$, consider their representative kernels $V$ and $W$. Since kernels are identified a.e. on $[0,1]^{(2)}$, we may assume $W(x,x)=V(x,x)=0$ for a.e. $x\in[0,1]$. Now use the same sequence of $\mathrm{Uni}[0,1]$ random variables $\round{Z_i}_{i=1}^k$ to obtain a coupling of $\rho_k([V])$ and $\rho_k([W])$. This is used implicitly in the following derivation and, hence, we will skip referring to the random variables $\Set*{Z_i}_{i=1}^k$ from here on. Define $X_k \coloneqq \round{W\round{Z_i,Z_j}}_{(i,j)\in\squarebrack{k}^{(2)}}$ and $Y_k \coloneqq \round{V\round{Z_i,Z_j}}_{(i,j)\in\squarebrack{k}^{(2)}}$. As a consequence of this coupling,
\begin{align}\label{eq:normcomparison}
    &\E{\norm{2}{Y_k-X_k}^2} = \sum_{i=1,j\neq i}^k \E{(V(Z_i, Z_j) - W(Z_i, Z_j))^2}\nonumber\\
    &= k(k-1)\int_{\interval{0,1}^2} (V(x,y) - W(x,y))^2 \diff x \diff y\nonumber
    = k(k-1) \norm{2}{V-W}^2. 
\end{align}
The first equality is using the fact that the diagonal terms of $Y_k-X_k$ are all zeroes. Hence $\E{\norm{2}{Y_k-X_k}} \le k \norm{2}{V-W}$, by the Jensen's inequality.

If either Assumption~\ref{asmp:taylor_1} or Assumption~\ref{asmp:taylor_2} hold, then for any $\varepsilon>0$, 
\begin{align}
    &\abs{ f_k(Y_k) - f_k(X_k) - \inner{\nabla f_k(X_k),Y_k-X_k} }\nonumber\\
    &\le \varepsilon \enorm{Y_k - X_k} \indicator{}{ \enorm{Y_k - X_k } \le \delta_\varepsilon} + C_0 \indicator{}{ \enorm{Y_k - X_k} > \delta_\varepsilon}.
\end{align}
Taking expectations on both sides, 
\begin{align*}
    &\abs{ \E{f_k(Y_k)} - \E{f_k(X_k)} - \E{\inner{\nabla f_k(X_k),Y_k-X_k}}}\nonumber\\
    &\le \varepsilon \E{\enorm{Y_k - X_k}} + C_0 \E{\indicator{}{ \enorm{Y_k - X_k} > \delta_\varepsilon}}\nonumber\\
    &\le \varepsilon k \norm{2}{V-W} + C_0 \Prob{ \enorm{Y_k - X_k} > \delta_\varepsilon}\nonumber\\
    &\le  \varepsilon k \norm{2}{V-W} + \frac{C_0}{\delta_\varepsilon^2} k^2 \norm{2}{V-W}^2,
\end{align*}
by Markov's inequality. As $\norm{2}{V-W}$ approaches zero, it is now clear that, for any $\varepsilon'>0$, 
\begin{align}
    \limsup_{V\in\Wcal,\, \enorm{V-W}\to 0} \frac{\left\lvert \E{f_k(Y_k)} - \E{f_k(X_k)} - \E{\inner{\nabla f_k(X_k),Y_k-X_k}} \right\rvert}{\norm{2}{V-W}} \le \varepsilon'.
\end{align}
Since $\varepsilon'$ is arbitrary, the above $\limsup$ must be zero.

By the definition of Fr\'echet-like derivatives (Definition~\ref{def:frechet_like_derivative}), we want to obtain some $\phi\in L^\infty\big(\interval{0,1}^{(2)}\big)$ such that
\begin{align}
    \E{\inner{\nabla f_k(X_k),Y_k-X_k}} = \inner{\phi,V-W}. \label{eq:required_frechet_linear}
\end{align}
  \sloppy Let $U_k \coloneqq Y_k - X_k$ (also similarly measurable with respect to $\Set*{Z_i}_{i=1}^k$), and let us denote by $A(Z) \coloneqq \nabla f_k\round{X_k} = \nabla f_k\round{\round{W\round{Z_i,Z_j}}_{(i,j)\in\squarebrack{k}^{(2)}}}$. Observe that
\begin{align}
   \quad  &\E{\inner{\nabla f_k(X_k),Y_k - X_k}} = \sum_{i,j=1}^k \E{\round{A(Z)}_{i,j} \round{U_k(Z)}_{i,j}}\nonumber\\
    &= \sum_{i,j=1}^k \E{\E{\round{A(Z)}_{i,j} \round{U_k(Z)}_{i,j}\given Z_i,Z_j}}\nonumber\\
    &= \sum_{i=1,j\neq i}^k \E{\E{\round{A(Z)}_{i,j} U(Z_i,Z_j)\given Z_i,Z_j}}\nonumber\\
    &= \sum_{i=1,j\neq i}^k \E{\E{\round{A(Z)}_{i,j} \given Z_i,Z_j}U(Z_i,Z_j)}\nonumber\\
    &= \sum_{i=1,j\neq i}^k \int_{\interval{0,1}^2}\E{\round{A(Z)}_{i,j} \given (Z_i,Z_j)=(x,y)}U(x,y)\diff x\diff y\nonumber\\
    &= \int_{\interval{0,1}^2} \round{\sum_{i,j=1}^k \E{\round{A(Z)}_{i,j} \given (Z_i,Z_j)=(x,y)}}U(x,y)\diff x\diff y.\label{eq:conditioning_to_obtain_phi}
\end{align}
Notice that, including the term $i=j$ above makes no difference in the integral. Therefore, if we choose $\phi\in L^\infty\big(\interval{0,1}^{2}\big)$ to be defined as
\begin{align}
    \phi(x,y) &\coloneqq \sum_{i,j=1}^k\E{\round{\nabla f_k\round{\round{W\round{Z_i,Z_j}}_{(i,j)\in\squarebrack{k}^{(2)}}}}_{i,j}\given Z_i=x, Z_j=y},
\end{align}
for $(x,y)\in\interval{0,1}^{(2)}$, then the required equality in equation~\eqref{eq:required_frechet_linear} is satisfied. 
And the action of the Fr\'echet-like derivative $\phi$ on a kernel, say $U\in\Wcal$, is
\begin{align}
    \begin{split}
        &\E{\inner{\nabla f_k\circ W_k,G_k\squarebrack{U}}}\\
        &\coloneqq \E{\inner{\nabla f_k\round{\round{W\round{Z_i,Z_j}}_{(i,j)\in\squarebrack{k}^{(2)}}}, \round{U(Z_i,Z_j)}_{(i,j)\in[k]^{(2)}} } }.
    \end{split}
    \label{eq:frechetexample}
\end{align}

\begin{lemma}
\label{lem:convexity_fktoF_k}
    If $f_k$ is $\lambda$-semiconvex on the convex set $\Mcal_k$, then $F_k$ is generalized geodesically $k(k-1)\lambda$-semiconvex on $\Graphons$. In particular, it is also geodesically $k(k-1)\lambda$-semiconvex on $\Graphons$.
\end{lemma}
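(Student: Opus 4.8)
The plan is to realize $F_k$ along a generalized geodesic in $\Graphons$ as the expectation of $f_k$ evaluated along a Euclidean line segment in $\Mcal_k$, and then push the hypothesized $\lambda$-semiconvexity of $f_k$ through that expectation. Fix $[W_0],[W_1]\in\Graphons$ and an arbitrary base $[W]\in\Graphons$, and use Definition~\ref{def:gen_geodesics} to obtain $\varphi,\varphi_0,\varphi_1\in\Tcal$ with $\delta_2([W],[W_0])=\enorm{W^\varphi-W_0^{\varphi_0}}$ and $\delta_2([W],[W_1])=\enorm{W^\varphi-W_1^{\varphi_1}}$; the associated generalized geodesic is $\vartheta=\round{[W_t]}_{t\in[0,1]}$ with $W_t\coloneqq(1-t)W_0^{\varphi_0}+tW_1^{\varphi_1}$, and $\vartheta_0=[W_0]$, $\vartheta_1=[W_1]$ because $\varphi_0,\varphi_1$ are measure preserving. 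Taking the representative kernels to vanish on the diagonal, sample $Z_1,\dots,Z_k$ i.i.d.\ $\mathrm{Uni}[0,1]$ and set $A\coloneqq\round{W_0^{\varphi_0}(Z_i,Z_j)}_{(i,j)\in[k]^{(2)}}$ and $B\coloneqq\round{W_1^{\varphi_1}(Z_i,Z_j)}_{(i,j)\in[k]^{(2)}}$ in $\Mcal_k$. Since the entries of $W_t$ are pointwise convex combinations and the sampling uses the \emph{same} $(Z_i)_{i=1}^k$, the matrix drawn from $W_t$ is exactly $(1-t)A+tB$, and it stays in $\effdom(f_k)$, which is convex because $f_k$ is $\lambda$-semiconvex. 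Hence the change-of-variables formula~\eqref{eq:change_of_var_Fk} together with the invariance of $F_k$ gives $F_k(\vartheta_t)=\E{f_k((1-t)A+tB)}$, $F_k(\vartheta_0)=\E{f_k(A)}$ and $F_k(\vartheta_1)=\E{f_k(B)}$.

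Next I would invoke the hypothesis. Since $\Mcal_k$ is convex and $f_k$ is $\lambda$-semiconvex on it, for every realization of $(Z_i)_{i=1}^k$ and every $t\in[0,1]$,
\[
  f_k((1-t)A+tB)\le(1-t)f_k(A)+tf_k(B)-\tfrac12\lambda\,t(1-t)\enorm{A-B}^2 .
\]
Taking expectations and using the second-moment identity from~\eqref{eq:normcomparison} — which is exactly where the diagonal-vanishing convention enters and which yields $\E{\enorm{A-B}^2}=k(k-1)\enorm{W_0^{\varphi_0}-W_1^{\varphi_1}}^2$ — gives
\[
  F_k(\vartheta_t)\le(1-t)F_k(\vartheta_0)+tF_k(\vartheta_1)-\tfrac12\,k(k-1)\lambda\,t(1-t)\enorm{W_0^{\varphi_0}-W_1^{\varphi_1}}^2 .
\]
Because $\enorm{W_0^{\varphi_0}-W_1^{\varphi_1}}\ge\delta_2([W_0],[W_1])$, this exhibits the $k(k-1)\lambda$-semiconvexity of $F_k$ along the generalized geodesic $\vartheta$ (with the squared $L^2$-length of $\vartheta$ in the quadratic term, in line with Definition~\ref{def:lambda_cvx_along_curve} and Lemma~\ref{lem:gen_geodesic_cvx_delta_2}). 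Since $[W],[W_0],[W_1]$ were arbitrary, $F_k$ is generalized geodesically $k(k-1)\lambda$-semiconvex on $\Graphons$. For the ``in particular'' clause, recall from the last sentence of Definition~\ref{def:gen_geodesics} that every constant-speed geodesic between $[W_0]$ and $[W_1]$ is itself a generalized geodesic for a suitable base, so the inequality just proved applies to it and yields geodesic $k(k-1)\lambda$-semiconvexity in the sense of Definition~\ref{def:semiconvexity}.

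The change of variables and the second-moment computation in~\eqref{eq:normcomparison} are routine, so the only step requiring genuine care is the bookkeeping around the coupling and the metric: one must verify that the Euclidean segment $t\mapsto(1-t)A+tB$ is the $\rho_k$-pushforward of the graphon curve $t\mapsto[W_t]$ — which hinges on using one sequence $(Z_i)$ for all $t$ and on linearity of the entries of $W_t$ — and that the quadratic term $k(k-1)\enorm{W_0^{\varphi_0}-W_1^{\varphi_1}}^2$ it produces is the one suited to generalized-geodesic semiconvexity as used in Theorem~\ref{thm:GF_convergence}, i.e.\ it need not equal $k(k-1)\delta_2^2([W_0],[W_1])$, only dominate it. Neither point is expected to cause real difficulty.
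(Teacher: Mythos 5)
Your proof follows the same route as the paper's: couple $\rho_k([W_0])$ and $\rho_k([W_1])$ through a shared i.i.d.\ sample $(Z_i)_{i=1}^k$ so that the sampled matrices interpolate linearly in $t$ along the generalized geodesic, push the pointwise $\lambda$-semiconvexity of $f_k$ through the expectation, and invoke the second-moment identity $\E{\enorm{X_{1,k}-X_{0,k}}^2}=k(k-1)\enorm{W_0^{\varphi_0}-W_1^{\varphi_1}}^2$. One small but worthwhile difference: in the last bookkeeping step the paper replaces $\enorm{W_0^{\varphi_0}-W_1^{\varphi_1}}^2$ with the smaller quantity $\delta_2^2([W_0],[W_1])$ via the inequality $\ge$, which only preserves the direction of the semiconvexity estimate when $\lambda\ge 0$ (the lemma is later applied with $\lambda<0$ for homomorphism densities); your formulation, which leaves the squared generalized-geodesic length $\enorm{W_0^{\varphi_0}-W_1^{\varphi_1}}^2$ in the quadratic term and notes that it dominates $\delta_2^2$, is the form that actually combines with the sharp intermediate estimate in the proof of Lemma~\ref{lem:gen_geodesic_cvx_delta_2} to verify the penalized-functional convexity assumption (where the coefficient $1/\tau+\lambda$ is positive) that Theorem~\ref{thm:GF_convergence} relies on.
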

\begin{proof} Since every geodesic is also a generalized geodesic, it suffices to prove the result for a generalized geodesic. Since $f_k$ is $\lambda$-semiconvex, for some $\lambda \in \mathbb{R}$, it follows that for any $X_0, X_1 \in \Mcal_k$,
\begin{equation}\label{eq:semicxfin}
    f_k(X_t) \le (1-t) f_k(X_0) + t f_k(X_1) - \frac{\lambda}{2} t(1-t)\norm{2}{X_1-X_0}^2\, , \quad \forall\; t \in \interval{0,1},
\end{equation}
along the curve $(X_t\coloneqq(1-t)X_0 + t X_1)_{t\in[0,1]}$.

\sloppy Let $[W_0], [W_1]$ be two graphons and let $\omega = \round{[W_t]}_{t\in[0,1]}$ be a generalized geodesic between $\omega_0 = [W_0]$ and $\omega_1 = [W_1]$. It follows from Definition~\ref{def:gen_geodesics} that $\omega$ has a representation in the space of kernels given by the line segment $\round{W_t=(1-t)W_0 + t W_1}_{t\in[0,1]}$, where the kernels $W_0$ and $W_1$ are such that $\norm{2}{W_0-W_1}\geq \delta_2([W_0], [W_1])$. Now use the same set $\Set*{Z_i}_{i=1}^k$ of i.i.d. $\mathrm{Uni}\interval{0,1}$ random variables as above to get a process $\round{X_{t,k}}_{k\in\Natural}$ of random matrices with distributions $\round{\rho_k([W_t])}_{k\in\Natural}$ respectively, for each $t\in\interval{0,1}$. Note that $X_{t,k} = (1-t)X_{0,k} + t X_{1,k}$, $t \in \interval{0,1}$, $k\in\Natural$. Hence applying equation~\eqref{eq:semicxfin} to this line segment and then taking expectations with respect to the joint law of $(Z_i)_{i=1}^k$, we get 
\[
    F_k([W_t]) \le (1-t) F_k([W_0]) + t F_k([W_1]) - \frac{\lambda}{2} t(1-t) \E{\norm{2}{X_{1,k}-X_{0,k}}^2},\quad t\in[0,1].
\]
Now by equation~\eqref{eq:normcomparison},
\[
    \E{\norm{2}{X_{1,k}-X_{0,k}}^2}=k(k-1) \norm{2}{W_1-W_0}^2\geq k(k-1) \delta^2_2([W_1], [W_0]).
\]
Putting it back together we get that for $t\in[0,1]$,
\begin{equation}\label{eq:semiconvex2}
    F_k(\omega_t) \le (1-t) F_k(\omega_0) + t F_k(\omega_1) - \frac{k(k-1) \lambda}{2} t(1-t) \delta^2_2(\omega_1, \omega_0).
\end{equation}
Therefore $F_k$ is $k(k-1)\lambda$-semiconvex along the generalized geodesic $\omega$. 
\end{proof}
In the following subsections, we examine special cases of linear functions.

\subsubsection{Scalar Entropy function}\label{sec:scalar_entropy}
Recall the scalar entropy function $\Ent\colon\Graphons\to\rr\cup\Set*\infty$, defined in equation~\eqref{eq:scalarentropy} as
\[
    \Ecal([W]) = \int_{[0,1]^2} h(W(x, y))\diff x\diff y,\qquad [W]\in\Graphons,
\]
where $h\colon\rr \rightarrow \rr\cup\Set*\infty$ is the convex entropy function $h(p)\coloneqq p\log p + (1-p) \log (1-p)$, if $p\in(0,1)$, $h(0)=h(1)=0$, and $h(p)=\infty$, otherwise, as defined in Section~\ref{sec:intro_example}. Observe that it can also be thought of as a linear function with $k=2$. If
\begin{align}
    f_2\round{\round{x_{(i,j)}}_{(i,j)\in\squarebrack{2}^{(2)}}} &\coloneqq h(x_{(1,2)}),\qquad x\in\Mcal_2,
\end{align}
then from equations~\eqref{eq:def_Fk} and~\eqref{eq:change_of_var_Fk},
\begin{align}
    F_2\round{\squarebrack{W}} = \inner{f_2,\rho_2\round{\squarebrack{W}}} &= \Exp{\Set*{Z_i\sim\mathrm{Uni}\interval{0,1}}_{i=1}^2}{f_2\round{\round{W(Z_i,Z_j)}_{(i,j)\in[2]^{(2)}}}}\nonumber\\
    &= \Exp{\Set*{Z_i\sim\mathrm{Uni}\interval{0,1}}_{i=1}^2}{h(W(Z_1,Z_2))} = \Ent\round{\squarebrack{W}},
\end{align}
for $\squarebrack{W}\in\Graphons$.
Since, $h(p)/p$ is bounded when $\epsilon \leq p \leq 1-\epsilon$ for some $\epsilon\in(0,1/2)$, it follows that $\Ent$ restricted to $\Graphons_{\epsilon}\coloneqq \setinline{ [W]\in \Graphons\given \epsilon\leq W\leq  1-\epsilon \text{ a.e.} }$, is continuous with respect to the weak-$*$ topology on $L^2([0, 1]^2)$. Since this is a weaker topology than the one generated by $\delta_{\cut}$, by~\cite[Lemma 8.22]{lovasz2012large}, it follows that $\Ent$ restricted to $\Graphons_{\epsilon}$ is $\delta_{\cut}$-continuous.

Since $h''(p)=1/(p(1-p))\ge 4$ for $p\in\effdom(h)$, it follows that $h$ is $4$-semiconvex on $\mathbb{R}$. Let $E\colon\Wcal\to\rr\cup\Set*\infty$ be the invariant extension of $\Ent$ such that $E(W)\coloneqq \Ent\round{\squarebrack{W}}$ for all $W\in\Wcal$. Then for any $W_0,W_1\in\Wcal$, defining $W_t \coloneqq (1-t)W_0 + tW_1$ for $t\in\interval{0,1}$, we have
\begin{align}
    E\round{W_t} &= \int_0^1\int_0^1 h(W_t(x,y))\diff x\diff y\nonumber\\
    &\leq \int_0^1\int_0^1\squarebrack{ (1-t)h(W_0(x, y)) + th(W_1(x, y)) }\diff x\diff y\nonumber\\
    & \qquad - \int_0^1\int_0^1 \frac{4}{2}t(t-1)\round{W_0-W_1}^2(x,y) \diff x\diff y\nonumber\\
    &= (1-t)E(W_0) + tE(W_1) - \inv{2}4t(1-t)\enorm{W_0-W_1}^2,
\end{align}
which implies that $E$ is $4$-semiconvex on $\round{\Wcal,d_2}$ (see Definition~\ref{def:semiconvexity}). Following Remark~\ref{rem:gen_geo_cvx_from_W}, $\Ent$ is $4$-semiconvex along generalized geodesics on $(\Graphons,\delta_2)$.

Suppose $W$ is valued in $\round{0,1}$ a.e.. Then,
\begin{align}
    \E{\inner{\nabla f_2\circ W_2,G_2\squarebrack{U}}} &= \int_0^1\int_0^1 \log\round{\frac{W(z_1,z_2)}{1-W(z_1,z_2)}} U(z_1,z_2) \diff z_1\diff z_2 ,
\end{align}
for all $U\in\Wcal$. By the characterization of the Fr\'echet-like derivative in equation~\eqref{eq:frechetexample}, we get that $\phi_\Ent = D_\Wcal f_2\round{W}$ is given by
\begin{align}
    \phi_{\Ent}(x,y) &= \log\round{\frac{W(x,y)}{1-W(x,y)}},\qquad \text{a.e.}\quad (x,y)\in\interval{0,1}^{(2)}.\label{eqn:phi_Ent}
\end{align}
If $\squarebrack{W}\in\Graphons_\epsilon$ for some $\epsilon\in (0, 1/2)$, then by Lemma~\ref{lem:upper_gradient_frechet} the local slope of entropy $\abs{\pdiff \Ent}\round{\squarebrack{W}}$ is given by $\abs{\pdiff \Ent}\round{\squarebrack{W}}=\enorm{\phi_{\Ent}}$.

Note that $\Graphons_{\epsilon}$ is closed in $\Graphons$ and since $W\mapsto \enorm{W}$ is $\delta_{\cut}$-lower semicontinuous~\cite[Lemma 14.15]{lovasz2012large}, it follows that the local slope of entropy, $\abs{\pdiff \Ent}$, is also $\delta_{\cut}$-lower semicontinuous on $\Graphons_{\epsilon}$. Following Remark~\ref{rem:velocityofgradientflow}, we conclude that starting from $[W_0]\in\Graphons_\epsilon$ the gradient flow of $\Ent$ flows with the velocity $-D_{\Graphons}\Ent(\squarebrack{W_0})$ at $[W_0]$ if $\epsilon\leq W_0\leq 1-\epsilon$ a.e. Consider the flow $W_t(x, y)$ obtained by solving 
\begin{align}
    W_t'(x, y)=-\log\round{\frac{W_t(x,y)}{1-W_t(x,y)}},\qquad \text{a.e.}\quad (x,y)\in\interval{0,1}^{(2)},\label{eq:gradient_scalar_entropy}
\end{align}
with initial condition $[W_0] \in \Graphons_\epsilon$. Then it follows that $\round{\omega_t\coloneqq \squarebrack{W_t}}_{t\in\rr_+}$ is a gradient flow of $\Ent$ starting from $\omega_0 = [W_0]$. 

\begin{takeaway}
    It is worth emphasizing that, following equation~\eqref{eq:gradient_scalar_entropy}, for any $\epsilon\in(0,1/2)$, the stationary point for the gradient flow of $\Ent$ is the constant half graphon, which is also the minimizer of $\Ecal$ on $\Graphons$. We also observe that the curve $(\omega_t)_{t\in\rr_+}$ always stays inside $\Graphons_\epsilon$ if $\omega_0\in\Graphons_\epsilon$. Since $\Ecal$ is $4$-semiconvex, it follows from Remark~\ref{rem:exp_convergence} that we are guaranteed an exponential rate of convergence of the gradient flow to the minimizer.
\end{takeaway}

\subsubsection{Homomorphism functions}\label{sec:homomorphism_example}
For $k\in\Natural\setminus\Set*{1}$ we can consider interactions such as the homomorphism functions that are continuous in the cut-metric:
\begin{align}
    T_H(\squarebrack{W}) &\coloneqq \E{\prod_{\Set*{i,j}\in E(H)}W(Z_i,Z_j)} = \int_{\Mcal_k} f_k(z)\rho_k(\diff z),
\end{align}
where $H$ is a simple graph with $\abs{V(H)} = k$, $E(H) = \Set*{e_i}_{i=1}^m$, $\Set*{Z_i}_{i\in\squarebrack{k}}$ are i.i.d. uniformly in $\interval{0,1}$, and $f_k\big(\round{x_{(i,j)}}_{(i,j)\in\squarebrack{k}^{(2)}}\big) = \prod_{\Set*{i,j}\in E(H)} x_{(i,j)}$. In particular, the function $f_k$ is a monomial for every simple graph $H$. Let $t_H\colon\Wcal\to\rr$ be the invariant extension of $T_H$ such that $t_H \coloneqq T_H\circ\squarebrack{\slot{}}$. Since
\begin{align}
    \round{\nabla f_k(X)}_{(p,q)} &= \indicator{E(H)}{p,q}\cdot\prod_{\Set*{i,j}\in E(H)\setminus\Set*{\Set*{p,q}}} X_{(i,j)},
\end{align}
for $p,q\in\squarebrack{k}$, the action of the Fr\'echet-like derivative $\phi_{T_H} = D_\Wcal t_H\round{W}$ on $U\in\Wcal$ according to equation~\eqref{eq:frechetexample} is given by
\begin{align}
    \sum_{\ell=1}^m\E{ \prod_{r=1}^{\ell-1} W\round{Z_{e_r}} \cdot U\round{Z_{e_\ell}} \cdot \prod_{r=\ell+1}^{m} W\round{Z_{e_r}}},
\end{align}
where $Z_e \coloneqq \round{Z_{e(1)},Z_{e(2)}}$ for all $e\in E(F)$. Following a similar approach to equation~\eqref{eq:conditioning_to_obtain_phi}, we obtain that $\phi_{T_H}$ satisfies
\begin{align}
\label{eqn:phiH_F}
    \begin{split}
        \phi_{T_H}(x,y) &= \sum_{\ell=1}^m \E{\prod_{r=1, r\neq \ell}^m W\round{Z_{e_r}} \given Z_{e_\ell} = (x,y) }\\
        &=\sum_{\ell=1}^m {\bf t}_{x, y}(H_{e_{\ell}}, W),\quad x,y\in\interval{0,1},
    \end{split}
\end{align}
where $H_{e}$ is the graph obtained by removing edge $e$ from $H$ and ${\bf t}_{x, y}(H_{e}, W)$ is the \emph{homomorphism density of a partially labeled graph}~\cite[Section 7.4]{lovasz2012large} defined
\[
    {\bf t}_{x, y}(H_{e}, W)\coloneqq \E{\prod_{f\in E(H_{e})} W\round{Z_{f}} \given Z_{e} = (x,y) }.
\]
Note that for fixed $W$ and $H_e$, we can think of $(x, y)\mapsto {\bf t}_{x, y}(H_e, W)$ as a bounded measurable function on $[0, 1]^2$. We now show that the kernel valued map $W\mapsto {\bf t}_{(\cdot, \cdot)}(H_{e}, W)$ is continuous with respect to $\norm{\cut}{}$. To this end, let $W, W'\in \Wcal$ and consider any product function $(x,y)\mapsto f(x)g(y)$ where $\norm{\infty}{f}, \norm{\infty}{g}\leq 1$. Note that
\begin{align*}
    {\bf t}_{x, y}(H_{e}, W)=\int_{[0, 1]^{k-2}} \prod_{\Set*{i,j}\in E(H_{e})} W(x_i, x_j) \prod_{\ell\in V(H_{e})\setminus e} \diff x_\ell.
\end{align*}
For each edge $\Set*{m,n}\in E(H_e)$, consider the integral 
\begin{align*}
   I_{m,n} &\coloneqq \int_{[0, 1]^{k}} (W(x_m, x_n)-W'(x_m, x_n))\prod_{\Set*{i,j}\in E(H_{e})\setminus\{m,n\}} W(x_i, x_j)\cdot\\
   &\qquad\qquad\qquad \prod_{\ell\in V(H_{e})\setminus e} \diff x_\ell \; f(x)g(y) \diff x\diff y.
\end{align*}
It follows from~\cite[Lemma 8.10]{lovasz2012large} (or see the second last display in the proof of~\cite[Lemma 10.24]{lovasz2012large}) that $\abs{I_{m,n}}\le \norm{\cut}{W-W'}$. From the same references above, it follows that
\begin{align*}
  &\abs{ \int_{[0, 1]^2} ({\bf t}_{x, y}(H_{e}, W)-{\bf t}_{x, y}(H_{e}, W'))f(x)g(y) \diff x\diff y}\\
  &\leq \sum_{\Set*{m,n}\in E(H_{e})} \abs{I_{m,n}} \leq \abs{E(H_e)}\norm{\cut}{W-W'}.
\end{align*}

Taking supremum over Borel measurable functions $f, g$ such that $\norm{\infty}{f}, \norm{\infty}{g}\leq 1$, we get that $W\mapsto {\bf t}_{(\cdot, \cdot)}(H_{e}, W)$ is Lipschitz continuous with respect to $\norm{\cut}{}$. Since $\abs{{\bf t}_{x, y}(H_{e}, W)}\leq 1$ for $W\in \Wcal$, it follows that $\norm{\infty}{\phi_{T_H}(W)}\le \abs{E(H)}$, that is, $\phi_{T_H}$ is uniformly bounded on $\Wcal$. 

For example, when $H$ is a triangle, the velocity $\phi_{T_H}$ follows from equation~\eqref{eqn:phiH_F} as $\phi_{T_H}(x, y)=3\int_0^1 W(x, z)W(z, y)\diff z$, for a.e. $x,y\in\interval{0,1}$, i.e., thrice the `operator product' of $W$ with itself~\cite[Section 7.4]{lovasz2012large}. If $H$ is a path on $3$ vertices and $2$ edges, then $\phi_{T_H}(x, y)=\int_0^1 W(x, z)\diff z + \int_0^1 W(y, z)\diff z = \deg(x)+\deg(y)$, where $\deg(x)\coloneqq\int_0^1 W(x, z)\diff z$ for a.e. $x,y\in\interval{0,1}$.

Obtaining the expression for the Fr\'echet-like derivative in equation~\eqref{eqn:phiH_F}, given a gradient flow $\omega$ of $T_H$, we can compute the velocity of the gradient flow as $D_{\Graphons}T_H(\omega_t)\indicator{G_{\omega_t}}{}$ for $t>0$.

The Hessian of the function $f_k$ can be easily computed as
\[
    \partial_{x_{(i,j)}x_{(p,q)}} f_k=\prod_{\Set*{a,b}\in E(H) \setminus\Set*{\Set*{i,j}, \Set*{p,q}}} x_{(a,b)},
\]
if $\{i,j\} \neq \{p,q\}$ are both edges in $E(H)$, and zero otherwise.

Since every $x_{(i,j)}\in\interval{-1,1}$ for all $(i,j)\in\squarebrack{k}^{(2)}$, every entry of the Hessian is uniformly bounded in $\interval{-1,1}$ and hence $\opnorm{\textrm{Hess}(f_k)}\le k(k-1)/2$. Therefore, $f_k$ is $(-k(k-1)/2)$-semiconvex w.r.t. $d_2$. It follows from Lemma~\ref{lem:convexity_fktoF_k} that homomorphism function $T_H$ is $(-k^2(k-1)^2/2)$-semiconvex w.r.t. $\delta_2$. In fact, since $\textrm{Hess}(f_k)(\{i, j\}, \{p, q\})$ is non-zero only when $\{i, j\}, \{p, q\}\in E(H)$, it follows that $\opnorm{\textrm{Hess}(f_k)}\le \sqrt{m(m-1)}\le m$. This would yield that $T_H$ is $(-mk(k-1))$-semiconvex w.r.t. $\delta_2$. This is useful when $H$ is sparse.

\begin{takeaway}
    Note that in this example, it is not clear if the minimizer is a constant graphon or not. If $H$ has odd number of edges however constant graphon $W\equiv -1$ is trivially a minimizer. In the case of graphs $H$ with even number of edges, explicitly determining the minimizer is trickier. As we discussed above, it is also not clear if the homomorphism density function is convex, therefore we cannot guarantee an exponential rate of convergence of the gradient flow to a minimizer following Remark~\ref{rem:exp_convergence}. To alleviate this, one can regularize the objective function by adding the scalar entropy function. We discuss this in the next example in Section~\ref{sec:linear_comb_entropy_hom}. However, in the simulation discussed in Section~\ref{sec:mantel}, it does appear that the gradient flow converges to the minimizer of the function of interest at an exponential rate.
\end{takeaway}

\subsubsection{Linear combination of Scalar Entropy and Homomorphism function}\label{sec:linear_comb_entropy_hom}
Let $\beta\in \rr$ and let $H$ be a finite simple graph with $k\in\Natural$ vertices and $m\in\Natural$ edges. Define the function $G=G_{\beta, H}\coloneqq \Ent+\beta T_H$ on the set $\Graphons_{\epsilon}$ for $\epsilon\in(0,1/2)$. The function $G$ is of particular interest in the theory of exponential graph models (see Section~\ref{sec:intro}). The function $G$ is $\delta_{\cut}$-continuous on $\Graphons_{\epsilon}$ and since $\Ent$ is $4$-semiconvex and $T_H$ is $\lambda$-semiconvex w.r.t. $\delta_2$, it follows that $G$ is also $(4+\beta\lambda)$-semiconvex w.r.t. $\delta_2$ for some $\lambda\in \rr$ that we estimate in Section~\ref{sec:homomorphism_example}. The gradient flow of $G$, therefore, exists and the Fr\'echet-like derivative $\phi_{G}=D_\Wcal G\round{W}=\phi_{\Ent}+\beta\phi_{T_H}$. Since $\Ent$ is $4$-semiconvex and $T_H$ is $(-k^2(k-1)^2/2)$-semiconvex w.r.t. $\delta_2$, it follows that $G_{\beta, H}$ is $(4-\beta k^2(k-1)^2/2)$-semiconvex w.r.t. $\delta_2$. Therefore, $G_{\beta, H}$ is at least $0$-semiconvex (i.e., convex) w.r.t. $\delta_2$ when $\beta\le 8/k^2(k-1)^2$. 

\begin{takeaway}
    Note that $\beta< 8/k^2(k-1)^2$ guarantees exponential rates of convergence of the gradient flow curve. See Remark~\ref{rem:exp_convergence}. 
\end{takeaway}

\subsection{Interaction energy}

\sloppy In the optimal transport literature, linear functionals of probability measures are often called \textit{potential energy}~\cite[page 249]{santambrogio2015optimal}. Inspired by similar definitions, one can define \textit{interaction energy}. Let $f_k\colon \Mcal_k \times \Mcal_k \rightarrow \mathbb{R}$ be a function defined on pairs of $k\times k$ symmetric matrices with entries in $[-1,1]$. Given a graphon $[W] \in \Graphons$, as before let $\rho_k=\mathrm{Law}(G_k[W])$. This defines a function $\mathbb{F}_k\colon \Graphons \rightarrow \mathbb{R}\cup\Set*\infty$ given by 
\[
    \mathbb{F}_k([W])\coloneqq \int_{\Mcal_k} \int_{\Mcal_k} f_k(z,z')\rho_k(\diff z) \rho_k(\diff z').
\]

Although it looks different than before, this is also a particular case of equation~\eqref{eq:frechetexample} as shown below. Define two independent sequences of i.i.d. $\mathrm{Uni}[0,1]$ random variables: $(Z_1, \ldots, Z_k)$ and $(Z'_1, \ldots, Z'_k)$, and consider the corresponding matrices 
\[
    X_k\coloneqq (W(Z_i, Z_j))_{(i,j)\in [k]^{(2)}}, \quad \text{and}\quad X'_k\coloneqq(W(Z'_i, Z'_j))_{(i,j)\in [k]^{(2)}}.
\]
Then $X_k$ and $X_k'$ are i.i.d. samples from $\rho_k$ and $\mathbb{F}_k([W])= \E{f_k(X_k, X'_k)}$. On the other hand, one can concatenate $\round{Z_i}_{i=1}^k$ and $\round{Z'_i}_{i=1}^k$ to construct a single vector $(Z_1, \ldots, Z_k, Z_1', \ldots, Z_k')$ of dimension $2k$ and consider the corresponding $2k\times 2k$ symmetric matrix $X_{2k}$ whose block diagonal components are $X_k$ and $X_k'$. By defining $\bar{f}_k(X_{2k})\coloneqq f_k(X_k, X_k')$, we represent $\mathbb{F}_k([W])= \E{\bar{f}_{2k}(X_{2k})}= \int \bar{f}_{2k}(w) \rho_{2k}(\diff w)$ and equation~\eqref{eq:frechetexample} continues to hold.   

An example of interaction energy is given by ``variance of homomorphism functions''. As before, let $H$ be a simple graph and $W_k, W'_k$ be i.i.d. sampled $(k\times k)$ symmetric matrices from a graphon $[W]$. Consider the function 
\begin{equation}\label{eq:interacexample}
    \mathbb{F}_k([W])\coloneqq \inv{2}\E{ \round{ \prod_{e \in E(H)} W(Z_e) - \prod_{e \in E(H)} W(Z'_e) }^2}=\Var{ \prod_{e \in E(H)} W(Z_e)},
\end{equation}
by symmetry, where $Z_e \coloneqq \round{Z_{e(1)},Z_{e(2)}}$ and $Z'_e \coloneqq (Z'_{e(1)},Z'_{e(2)})$ for all $e\in E(H)$. In fact, the above identity holds for whenever we take $f_k(z,z')= \round{g_k(z) - g_k(z')}^2$, for any function $g_k$ on $\Mcal_k$ that is square-integrable w.r.t. $\rho_k$. Unfortunately this particular function $\mathbb{F}_k$ in~\eqref{eq:interacexample} is continuous in $\delta_2$ but not in $\delta_\cut$. See~\cite[Theorem 10.15]{janson2010graphons}. Hence, although the curve of maximal slope exists due to the existence of Fr\'echet-like derivatives, this particularly natural example does not satisfy the assumptions of our convergence theorem.

A similar but slightly different example of interaction which does satisfy our conditions can be constructed as follows. For a simple graph $H$ with $k$ vertices, consider its simple subgraphs $H_1$ and $H_2$ with $k_1$ and $k_2$ vertices respectively, such that every vertex and edge in $H$ is contained in at least one of the subgraphs. Pool all the uniform random variables $\Set*{Z_i}_{i=1}^{k_1}\cup \Set*{Z'_i}_{i=1}^{k_2}$ to get a single set of $k_1+k_2\geq k$ i.i.d. $\mathrm{Uni}\interval{0,1}$ random variables $\Set*{U_i}_{i=1}^{k_1+k_2}$ such that $\Set*{U_i}_{i=1}^{k_1}=\Set*{Z_i}_{i=1}^{k_1}$ and $\Set*{U_i}_{i=k_1+1}^{k} = \Set*{Z_j'}_{j\in V(H)\setminus V(H_1)}$. 
We can then define $I_{H_1,H_2}(\slot{};H)\colon\Graphons_\epsilon \to \rr\cup\Set*\infty$ as
\begin{align}
    \begin{split}
    I_{H_1,H_2}([W];H) &\coloneqq \log\round{\E{\prod_{\{i,j\} \in E(H_1)} W(Z_i, Z_j)}}\\
    & \qquad\qquad + \log\round{\E{\prod_{\{i,j\} \in E(H_2)} W(Z'_i, Z'_j)}} \\
    & \qquad \qquad \qquad \quad - 2\log\round{\E{ \prod_{\{i,j\} \in E(H)} W(U_i, U_j)}},
    \end{split}
    \label{eq:I_F1F2}
\end{align}
for some $\epsilon\in (0,1)$. Each of the terms in the expression in equation~\eqref{eq:I_F1F2} is the logarithm of the homomorphism densities of a simple graph. Logarithms of homomorphisms are well studied in graph theory and, in particular, related to the max-cut problem (see~\cite[Remark 5.4, Example 5.18]{lovasz2012large}).

We can construct a graph $H_1H_2$ by forming the disjoint union of $H_1$ and $H_2$, identifying the vertices of the same label, adding all the edges between vertices with the same label according to~\cite[Section 4.2]{lovasz2012large}. $H_1H_2$ can have multiple edges. Then, using~\cite[Proposition 7.1]{lovasz2012large} for the homomorphism density as a simple graph parameter, we get that the determinant of the connection matrix of the homomorphism density 
\begin{equation}\label{eq:determinant}
\begin{split}
    T_{H_1H_1}(\squarebrack{W})T_{H_2H_2}(\squarebrack{W}) - T_{H_1H_2}^2(\squarebrack{W}) &\ge 0 ,\\
    \text{i.e.,}\quad \frac{T_{H_1H_1}(\squarebrack{W})T_{H_2H_2}(\squarebrack{W})}{T_{H_1H_2}^2(\squarebrack{W})} &\ge 1. 
\end{split}
\end{equation} 
By the assumption that each vertex and edge of $H$ is contained in at least one of the subgraphs, if we identify the multiple edges between the same pair of vertices in $H_1H_2$ we get back $H$.
Since $T$ is a simple graph parameter, we have $T_{H_1H_2} = T_H$, $T_{H_1H_1} = T_{H_1}$ and $T_{H_2H_2} = T_{H_2}$, by definition.
Thus, taking logarithms in the final expression of~\eqref{eq:determinant}, we get $I_{H_1, H_2}(\slot{};H)\ge 0$. It is exactly zero if $H_1$ and $H_2$ are vertex disjoint. Thus, one may think that $I_{H_1, H_2}(\slot{};H)$ measures the dependence of the two subgraphs on the homomorphism density. 

We can similarly construct higher order interactions by considering multiple subgraphs instead of just two. In that case, one may consider the logarithm of the determinant of the connection matrix of the homomorphism density and define $I$ suitably.  

The argument in Section~\ref{sec:homomorphism_example} shows that this function satisfy all our conditions. The computation for its Fr\'echet-like derivative also follows from Section~\ref{sec:homomorphism_example} followed by the application of the chain rule for derivatives. Logarithms of determinants of matrices play an important role in displacement convexity of Wasserstein optimal transport (see~\cite[proof of Theorem 2.2]{M97}). It is an interesting coincidence that they also appear in this context.

\subsection{Internal energy}\label{sec:internal_energy}
Similar to potential and interaction energies, one can define non-linear functions of $\rho_k$ corresponding to what are called `internal energies' in the optimal transport literature. Let $u$ be a real-valued function on $\rr_+$ such that $u(0)=0$. For a probability measure $\rho$ on an Euclidean space, define the function
\[
U(\rho) \coloneqq \begin{cases}
    \int_{\Mcal_k} u(\rho(z))\diff z, & \text{if $\rho$ is absolutely continuous}, \\
    \infty, & \text{otherwise}.
\end{cases}
\]
Here we have used the standard abuse of notation in optimal transport literature of denoting an absolutely continuous measure and its density by the same notation. This defines a nonlinear function on graphons in the following manner. For $1\le l \le k$, consider a function $G_{k,l}\colon \Mcal_k\rightarrow \interval{-1,1}^l$ that selects a particular subset of length $l$ from the upper-diagonal elements of a $k\times k$ matrix. Consider the pushforward $\rho_{k,l}([W])\coloneqq \left(G_{k,l}\right)_{\sharp}\round{\rho_k([W])}$. Since $\rho_k$ is generated by $k$ i.i.d. $\mathrm{Uni}[0,1]$ random variables, and $l \le k$, it is easy to come up with examples where $\rho_{k,l}$ has a density. For example, take $W(x,y)=\sin(x+y)$, $k=3, l=2$, and $G_{k,l}(A)=(A_{1,2}, A_{1,3})$. Thus $\round{G_{k,l} \circ G_k}([W])=\left( \sin(Z_1+Z_2), \sin(Z_1+Z_3) \right)$. This random vector has a density.  
Thus, the function $U\left( \rho_{k,l}([W])\right)$ for $\squarebrack{W}\in\Graphons$ has a non-empty domain. A prominent example of such functions is the differential entropy for which $u\colon x\mapsto x\log x$ where one computes the differential entropy of $\rho_{k,l}([W])$. 

Such functions cannot have Fr\'echet-like derivatives since a necessary condition for its existence is that the function must be continuous in $L^2$. On the other hand, one cannot expect a discrete to continuum convergence as considered here. Even in the case of Wasserstein gradient flows, gradient flow of entropy is obtained by adding Brownian noise to particles and not by taking limits of Euclidean gradient flows~\cite{JKO98}.



\bibliographystyle{alpha} 
\bibliography{references}


\end{document}